\newtheorem{theorem}{Theorem}[section]
\newtheorem{proposition}[theorem]{Proposition}
\newtheorem{lemma}[theorem]{Lemma}
\newtheorem{corollary}[theorem]{Corollary}
\newtheorem{fact}[theorem]{Fact}
\theoremstyle{definition}
\newtheorem{definition}[theorem]{Definition}
\theoremstyle{remark}
\newtheorem{remark}[theorem]{Remark}
\numberwithin{equation}{section}
\def \P {\mathbb{P}^1}
\def \C {\mathbb{C}}
\def \ZZ {\mathbb{Z}}
\def \tt {\otimes}
\def \p {\oplus}
\newcommand{\OOO}{\mathcal{O}_{\mathbb{P}^1}}
\newcommand{\OO}{\mathcal{O}}
\newcommand{\MM}{\mathcal{M^{\text{co}}}}
\newcommand{\MPP}{\mathcal{M^{\text{co}}}}
\newcommand{\MPdeg}{\mathcal{M}_{\mathbb{P}^1}^{\text{co}}}
\newcommand{\MMM}{\mathcal{M^{\text{co}}}(2)}
\newcommand{\U}{\mathcal{U}}
\newcommand{\del}{\partial}
\newcommand\numberthis{\addtocounter{equation}{1}\tag{\theequation}}
\DeclareMathOperator{\HH}{H}
\DeclareMathOperator{\End}{End}
\DeclareMathOperator{\SSS}{S}
\DeclareMathOperator{\gr}{gr}
\DeclareMathOperator{\ch}{char}
\DeclareMathOperator{\Aut}{Aut}
\DeclareMathOperator{\Hom}{Hom}
\DeclareMathOperator{\Tot}{Tot}
\DeclareMathOperator{\Pic}{Pic}
\DeclareMathOperator{\I}{Im}
\DeclareMathOperator{\K}{Ker}
\DeclareMathOperator{\Id}{Id}
\DeclareMathOperator{\pr}{pr}
\DeclareMathOperator{\Ext}{Ext^1}
\DeclareMathOperator{\h}{h}
\DeclareMathOperator{\rk}{rk}
\DeclareFontFamily{U}{matha}
{\hyphenchar\font45}
\DeclareFontShape{U}{matha}{m}
{n}{
<5> <6> <7> <8> <9> <10> 
gen * matha 
<10.95> matha10 <12>
<14.4> <17.28> <20.74> <24.88> 
matha12
}{ }
\begin{document}

\title{Moduli spaces of semistable rank-2 co-Higgs bundles over $\P \times \P$}
\author{Alejandra Vicente Colmenares}

\address{Department of Pure Mathematics, University of Waterloo. 200 University Ave W, Waterloo, On. Canada,  N2L 3G1}
\email{avicente@uwaterloo.ca}
\date{\today}

\begin{abstract} 
It has been observed, by S. Rayan, that the complex projective
surfaces that potentially admit non-trivial examples of semistable
co-Higgs bundles must be found at the lower end of the Enriques-Kodaira
classification. Motivated by this remark, it is natural to study the
geometry of these objects over $\P \times \P$. In this paper, we present necessary and sufficient conditions on the Chern
classes $c_1,c_2$ of a bundle that guarantee the non-emptiness of the
moduli space $\MPP(c_1,c_2)$ of rank 2 semistable co-Higgs bundles over
$\P \times \P$ (we do this with respect to the
standard polarization). We also
give an explicit description of the moduli spaces for certain choices of
$c_1$ and $c_2$.\end{abstract}

\maketitle

\section{Introduction}

A Higgs bundle on a complex projective manifold $X$ is a pair $(E, \Phi)$ consisting of a holomorphic vector bundle $E$ over $X$ together with a Higgs field $\Phi\colon E \to E \tt T^{\lor}$ taking values in the holomorphic cotangent bundle $T^{\lor}$ of $X$ such that $\Phi \wedge \Phi \in \HH^0(\End E \tt \wedge^2 T^{\lor})$ is identically zero. Higgs bundles were introduced almost 30 years ago by Hitchin in \cite{NH4} and by Simpson in his PhD dissertation \cite{Simthesis}. These objects have several interesting applications to both physics and mathematics, and have been extensively studied by many other authors, including Bradlow, Garc{\'i}a-Prada, Gothen, Wentworth; see for instance \cite{BGG1, BGG, GP, G, G1, NH2, Sim, Simp, Went}. Co-Higgs bundles, on the other hand, are holomorphic vector bundles $E$ paired with Higgs fields $\Phi \colon E \to E \tt T$ taking values in the holomorphic tangent bundle $T$ of $X$, instead of its holomorphic cotangent bundle $T^{\lor}$, and satisfying the same integrability condition $\Phi \wedge \Phi=0 \in \HH^0(\End E \tt \wedge^2 T)$. 

The study of co-Higgs bundles is fairly recent. They first appeared in the work of Gualtieri \cite{MG}, and were further studied by Hitchin in \cite{NH5, NH3} and Rayan in \cite{SR1, SR2, SR3}. As Rayan pertinently points out in his PhD dissertation \cite{SR1}, the study of co-Higgs bundles goes beyond idle curiosity, as these objects appear naturally in geometry; for example, in generalized complex geometry and in the theory of twisted quiver bundles. In the realm of generalized complex geometry, as introduced by Hitchin in \cite{NH1} and developed by Gualtieri in \cite{MG}, co-Higgs bundles emerge as generalized holomorphic vector bundles over complex manifolds (regarded as generalized complex manifolds). Co-Higgs bundles, just as Higgs bundles, also fit in the realm of twisted quiver bundles as developed by \'{A}lvarez-C\'{o}nsul and Garc\'{i}a-Prada in \cite{OGP1}. Indeed, a co-Higgs bundle can be thought of as a quiver bundle formed by one vertex and one arrow (with the homomorphism satisfying the integrability condition) whose head and tail coincide, and the twisting bundle is the holomorphic cotangent bundle $T^{\lor}$.

Co-Higgs bundles come with a natural stability condition (see Definition \ref{definitionstabilityofcH}), analogous to the one discovered by Hitchin in \cite{NH4} for Higgs bundles, which allows the study of their moduli spaces. Rayan has already given a complete characterization of (rank 2) semistable co-Higgs bundles over Riemann surfaces, but very little is known about these objects in higher dimensions. In his PhD dissertation \cite{SR1} and in \cite{SR2, SR3}, Rayan makes a thorough investigation of semistable rank 2 co-Higgs bundles over the Riemann sphere and constructs some examples over the projective plane. He also proves a non-existence result for non-trivial (i.e., non-zero Higgs field) stable co-Higgs bundles over K3 and general type surfaces, suggesting that some of the interesting examples must be found at the lower end of the Enriques--Kodaira classification of (compact) complex surfaces. 

Motivated by the above observation, in this paper, we focus on $\P \times \P$. After fixing the standard polarization for the notion of stability, we first give necessary and sufficient conditions on the second Chern class of the bundle that guarantee the existence of non-trivial semistable rank 2 co-Higgs pairs over $\P \times \P$. We then proceed to explicitly describe some of the moduli spaces. 

For the benefit of the reader we now give a brief account of the main results of the paper. In section \ref{section4}, we prove an existence theorem (Theorem \ref{fullanswernon-emptiness}) on semistable rank 2 co-Higgs bundles over $\P \times \P$. That is, given first and second Chern classes, $c_1$ and $c_2$, we give conditions that ensure the existence of a non-trivial semistable co-Higgs pair with those Chern classes. Indeed, if we let $F$ and $C_0$ denote the two classes of divisors that freely generate $\Pic(\P \times \P)$, and we let $\MM(c_1,c_2)$ denote the moduli space of rank 2 semistable co-Higgs bundles over $\P \times \P$ with first Chern class $c_1$ and second Chern class $c_2$, we prove: 

\vspace{.05in}

\noindent {\bf Theorem.} {\it Let $c_1=\alpha C_0 + \beta F$ and $c_2=\gamma$. Then, the moduli space $\MM(c_1,c_2)$ is non-empty (and moreover it contains a non-trivial co-Higgs pair) if and only if one of the following holds:}
\begin{enumerate}
{\it 
\item at least one of $\alpha$ and $\beta$ is even and $2\gamma \geq \alpha\beta$;
\item $\alpha$ and $\beta$ are both odd and $2\gamma \geq \alpha \beta -2$.}
\end{enumerate}

\vspace{.05in}

In Section \ref{section5}, we give explicit descriptions of the moduli spaces $\MPP(c_1,c_2)$ for certain values of $c_1$ and $c_2$. For $c_1=-F$ (or $-C_0$) and $c_2=0$, we have the following description of the moduli space (Theorem \ref{explicitdescriptionmodulispace0}):

\vspace{.05in}

\noindent {\bf Theorem.} {\it
The moduli space $\MPP(-F,0)$ is a 6-dimensional smooth variety isomorphic to the moduli space $\MPdeg(-1)$ of rank 2 stable co-Higgs bundles of degree $-1$ over $\P$ (the latter is described in \cite[Section~7]{SR2})}. 

\vspace{.05in}

For $c_1=0$ and $c_2=0$, we are not able to give such an explicit description. Nonetheless, we show that there are only three underlying bundles that admit semistable Higgs fields: 
$$\OO \p \OO, \quad \OO(1,0) \p \OO(-1,0) \quad \text{and}\quad \OO(0,1) \p \OO(0,-1).$$ 
Also, we fully describe the Higgs fields that make $\OO \p \OO$ strictly semistable, and in Proposition \ref{modulispace00} we prove that the Higgs fields of points in $\MPP(0,0)$ with underlying bundle $\OO(1,0) \p \OO(-1,0)$  ($\OO(0,1) \p \OO(0,-1)$) are naturally parametrized by $\HH^0(\OO(4,0))$ ($\HH^0(\OO(0,4))$, respectively). 

For $c_1=-F$ and $c_2=1$, we show that any underlying bundle in the moduli space $\MM(-F,1)$ is an extension of $\OO(-1,1)$ by $\OO(0,-1)$ (Proposition \ref{allunderlying1}). Then, for the sake of being explicit, we describe all the Higgs fields that these bundles admit. Finally, we give a full description of the moduli space in Theorem \ref{explicitdescriptionmodulispace1} (see also Propositon \ref{modulispace1beforequotiening}):

\vspace{.05in}

\noindent {\bf Theorem.} {\it
The moduli space $\MM(-F,1)$ is a 7-dimensional algebraic variety whose singular locus are the points $(E,0)$ for any non-trivial extension $E$ of $\OO(-1,1)$ by $\OO(0,-1)$.}

\vspace{.05in}

An important tool in constructing examples of semistable co-Higgs pairs or in understanding their moduli spaces are spectral covers, and so we conclude this paper with a short section that addresses this construction and the Hitchin correspondence in the case of $\P \times \P$ (Section \ref{ssP1xP1}).  Given a rank 2 co-Higgs bundle over the complex projective manifold $X$, we can associate to it a spectral manifold, which is a double cover of $X$ naturally living in the total space of its tangent bundle.  By the work of Hitchin and Simpson, it is well known that, under certain genericity conditions, one can construct rank 2 stable co-Higgs pairs over $X$ in the following fashion: Take any rank 1 torsion-free coherent sheaf $\mathcal{F}$ over the spectral cover of $X$ and push it down to obtain the underlying bundle of the co-Higgs pair, take also the push down of the multiplication map associated to $\mathcal{F}$ to obtain the Higgs field (one would of course need to check that the integrability condition is satisfied). In the $\P \times \P$ setting, we show that the generic elements in the moduli space are such that the underlying bundles are not decomposable (Proposition \ref{fibresoftheHitchinmap}). Finally, in certain non-generic cases, we are able to describe the spectral covers as trivial elliptic fibrations over $\P$; in these cases, the fibres of the Hitchin map do contain co-Higgs pairs with decomposable underlying bundles (Proposition \ref{correspondencepushingdownbundles}). 

\

\noindent {\bf Acknowledgments.} I thank Ruxandra Moraru for introducing me to co-Higgs bundles, and for several discussions on the topic. I also thank Steven Rayan for the many helpful and interesting conversations on this subject. 

\section{Preliminaries}\label{co-Higgsbundles}

In this section we recall the basic definitions and properties of (semistable) co-Higgs bundles 
over complex projective manifolds. For more details we refer the reader to \cite{SR1}. Throughout this section we fix a complex projective manifold $X$ and denote its tangent bundle by $T$.

\begin{definition}
A \emph{co-Higgs bundle} (or \emph{co-Higgs pair}) on $X$ is a vector bundle $V~\to~X$ together with a map $\Phi \in \HH^0(X;\End V \tt T)$ for which $\Phi \wedge \Phi \in \HH^0(X;\End V \otimes \wedge^2 T)$ is identically zero. We refer to such a $\Phi$ as a  \emph{Higgs field} of $V$. 
\end{definition}


\begin{remark}\label{superwedge}
Let us recall that the wedge, $- \wedge -$, acts as the commutator in elements of $\End V$ and as the usual wedge in elements of $T$. For instance, when $X$ is a surface and $\Psi, \Phi \in \HH^0(\End V \tt T)$, if we work locally, say $\Psi= \Psi_1\del_1 + \Psi_2\del_2$ and $\Phi=\Phi_1\del_1+\Phi_2\del_2$, then
\begin{align*}
\Psi \wedge \Phi 
                           & =  ([\Psi_1,\Phi_2]-[\Psi_2,\Phi_1])\del_1\wedge\del_2.
\end{align*}
\end{remark}

In order to define a natural stability condition for co-Higgs bundles, let us fix a polarization $H$ in the ample cone of $X$.

\begin{definition}\label{definitionstabilityofcH}
A co-Higgs bundle $(V, \Phi)$ on a complex projective manifold $X$ is stable (respectively, semistable) if
\begin{equation}\label{sta}
\mu(W) := \frac{\deg_H W}{\rk W} < \frac{\deg_H V}{\rk V} =: \mu(V)
\end{equation}
(respectively, $\leq$) for each non-zero proper subsheaf $W$ of $V$ that is $\Phi$-invariant; i.e., $\Phi(W) \subseteq W \otimes T$. The number $\mu(V)$ is called the $H$-slope of $V$.
\end{definition}

\begin{remark} \
\begin{enumerate}
\item Recall that, if $X$ is a surface and $V$ is of rank 2, it suffices to check inequality~(\ref{sta}) for sub-line bundles of $V$.
\item We will repeatedly use the fact that tensoring a (semi)stable co-Higgs bundle by a line bundle does not affect (semi)stability.
\end{enumerate}
\end{remark}


Let $(V,\Phi)$ be a co-Higgs bundle. We say that the co-Higgs pair is  \emph{non-trivial} if $\Phi$ is non-zero. Also, we say that $\Phi$ is (semi)stable whenever the pair $(V,\Phi)$ is (semi)stable. Recall that the trace-free part of  $\Phi$ is 
\begin{displaymath}
\Phi_0:= \Phi-\left(\frac{\text{Tr}\Phi}{\rk(V)}\right)\Id \in \HH^0(\End_0 V \tt T).
\end{displaymath}
It is immediate to check that $(V,\Phi)$ is (semi)stable if and only if $(V,\Phi_0)$ is (semi)stable. Hence, from now on, whenever we have a co-Higgs pair $(V,\Phi)$, $\Phi$ is assumed to be trace-free. 

A morphism of co-Higgs bundles $(V,\Phi)$ and $(V',\Phi')$ is a commutative diagram

\begin{displaymath}
\xymatrix{
V \ar[d]^{\Phi}\ar[rr]^{\psi}&&V' \ar[d]_{\Phi'} \\
V \otimes T \ar[rr]^{\psi \otimes \Id}&&V' \otimes T 
}
\end{displaymath}

\noindent in which $\psi: V \to V'$ is a homomorphism of vector bundles. The pairs $(V,\Phi)$ and $(V',\Phi')$ are said to be isomorphic if $\psi$ is an isomorphism of vector bundles. 

In the moduli spaces of semistable co-Higgs bundles over $X$, we identify co-Higgs pairs up to $S$-equivalence. That is, pairs whose associated graded objects are isomorphic as co-Higgs bundles. Let us briefly recall this notion. Given a  strictly semistable pair $(V,\Phi)$, we obtain a co-Higgs Jordan-H\"older filtration:
\begin{displaymath}
0=V_0 \subset \cdots \subset V_m=V
\end{displaymath}
for some $m$. Here $(V_j, \Phi)$ is semistable for $1 \leq i \leq m-1$, $(V_j/V_{j-1}, \Phi)$ is stable, and $\mu(V_j)=\mu(V_j/V_{j-1})=\mu(V)$ for $1 \leq j \leq m$. The \emph{associated graded object} of $(V, \Phi)$ is:
\begin{displaymath}
\gr(V, \Phi) := \bigoplus_{j=1}^{m}(V_j/V_{j-1}, \Phi).
\end{displaymath}

The associated graded object of a strictly semistable pair $(V, \Phi)$ is easy to describe when the underlying bundle is decomposable:

\begin{lemma}\label{gr}
Let $X$ be a surface, and let $E=G_1 \p G_2$ be a decomposable rank 2 bundle over $X$. Suppose $(E,\Phi)$ is strictly semistable with $\Phi=\left(\begin{array}{cc}
A & B \\
C & -A \\
\end{array}\right) \in \HH^0(\End_0 E \tt T)$. If $G_1$ is $\Phi$-invariant and $\mu(G_1)=\mu(E)$, then 
\begin{displaymath}
\gr\left(E, \Phi \right)=\left(E, \left(\begin{array}{cc}
A & 0 \\
0 & -A \\
\end{array}\right) \right).
\end{displaymath}
\end{lemma}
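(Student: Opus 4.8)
The plan is to exhibit an explicit co-Higgs Jordan--H\"older filtration of $(E,\Phi)$ passing through $G_1$ and to read off its associated graded object directly.

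First I would unwind what the hypotheses say about the matrix of $\Phi$. Writing a local section of $E=G_1 \p G_2$ as a pair $(s,t)$, with $s$ a section of $G_1$ and $t$ a section of $G_2$, invariance of $G_1$, i.e.\ $\Phi(G_1) \subseteq G_1 \tt T$, forces the lower-left entry to vanish: the image $\Phi(s,0)=(As,Cs)$ must lie in $G_1 \tt T$, so $C=0$ and
\begin{displaymath}
\Phi=\begin{pmatrix} A & B \\ 0 & -A \end{pmatrix}.
\end{displaymath}
Moreover, since $\rk E=2$, the hypothesis $\mu(G_1)=\mu(E)=\tfrac12(\deg_H G_1 + \deg_H G_2)$ gives $\deg_H G_1=\deg_H G_2$, so $G_1$ and $G_2$ have equal slope, both equal to $\mu(E)$.

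Next I would argue that $0 \subset G_1 \subset E$ is a co-Higgs Jordan--H\"older filtration. Its two successive quotients, $G_1$ and $E/G_1 \cong G_2$, are line bundles, hence automatically stable as co-Higgs pairs, and by the previous paragraph each has slope $\mu(E)$; since both graded pieces have rank $1$ the filtration is maximal, so it is indeed a Jordan--H\"older filtration. Because the associated graded object of a strictly semistable pair is independent (up to isomorphism) of the chosen Jordan--H\"older filtration, it suffices to compute the graded pieces of this particular one. On $G_1$ the restriction of $\Phi$ is the scalar $A$, as $\Phi(s,0)=(As,0)$. On the quotient $E/G_1 \cong G_2$, lifting a section $t$ to $(0,t)$, applying $\Phi$, and projecting back annihilates the component $Bt \in G_1 \tt T$ and leaves $-At$, so the induced field on $G_2$ is $-A$. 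Hence
\begin{displaymath}
\gr(E,\Phi)=(G_1,A)\,\p\,(G_2,-A)=\left(E,\begin{pmatrix} A & 0 \\ 0 & -A \end{pmatrix}\right),
\end{displaymath}
as claimed.

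I expect no serious obstacle: the content is entirely the bookkeeping of the induced Higgs field on the quotient, and the only place the off-diagonal term $B$ could intervene is precisely where passing to the associated graded annihilates it. The sole points requiring a word of justification are the automatic stability of the rank-$1$ graded pieces and the well-definedness of $\gr$ independent of the filtration, both of which are standard.
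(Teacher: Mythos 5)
Your proof is correct and follows the same route as the paper: exhibit $0 \subset G_1 \subset E$ as a co-Higgs Jordan--H\"older filtration and read off the graded pieces $(G_1,A)$ and $(G_2,-A)$. You simply spell out the details (vanishing of $C$, equality of slopes, the induced field on the quotient) that the paper leaves implicit.
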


\begin{proof}
The following is a co-Higgs Jordan-H\"older filtration of $E$:
\begin{displaymath}
0 \subset G_1  \subset E.
\end{displaymath}
Hence
\begin{displaymath}
\gr(E,\Phi) = (G_1, \Phi_1) \p (G_2, \Phi_2),
\end{displaymath}
with $\Phi_1=A$ and $\Phi_2=-A$.
\end{proof}

Let us finish this section with the following two lemmas, which will be used in subsequent sections.

\begin{lemma}\label{isb}
Let $W$ be a sub-bundle of $V$ and
\begin{displaymath}
S:=\{\varphi \in \HH^0(X;{\Hom}(W, V \otimes T)) \;\; | \;\;  \varphi=\Phi |_{W} \text{ for some } \Phi \in \HH^0(X;\End V \otimes T) \}.
\end{displaymath}
Also, let $\boldsymbol \iota: \HH^0(X;\Hom(W,W\tt T)) \to \HH^0(X;\Hom(W,V \tt T))$ be the map induced by the inclusion $\iota: W \hookrightarrow V$ . If $S \subseteq \I(\boldsymbol \iota$), then $W$ is $\Phi$-invariant for any $\Phi \in \HH^0(X;\End V \otimes T)$.
\end{lemma}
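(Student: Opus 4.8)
The plan is to unwind the two subspaces of $\HH^0(X;\Hom(W,V \tt T))$ that enter the statement and to observe that, after translation, the hypothesis $S \subseteq \I(\boldsymbol\iota)$ is exactly the geometric condition we want. The first step is to describe $\I(\boldsymbol\iota)$ concretely. Since $\boldsymbol\iota$ is post-composition with the bundle inclusion $\iota \tt \Id_T \colon W \tt T \hookrightarrow V \tt T$, every element of $\I(\boldsymbol\iota)$ has the form $(\iota \tt \Id_T)\circ \psi$ for some $\psi \in \HH^0(X;\Hom(W,W \tt T))$, and as a morphism of sheaves its image lands inside the sub-bundle $W \tt T \subseteq V \tt T$. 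Conversely, any global homomorphism $W \to V \tt T$ whose image lies in $W \tt T$ factors uniquely through $\iota \tt \Id_T$ and hence lies in $\I(\boldsymbol\iota)$. Thus $\I(\boldsymbol\iota)$ is precisely the set of global sections ``with image contained in $W \tt T$''.

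With this reformulation in hand the argument is immediate. I would fix an arbitrary $\Phi \in \HH^0(X;\End V \tt T)$ and consider its restriction $\Phi|_W$, i.e.\ the composite of the inclusion $\iota\colon W \hookrightarrow V$ with $\Phi\colon V \to V \tt T$, which is a global section of $\Hom(W, V \tt T)$. By the very definition of $S$ we have $\Phi|_W \in S$, and the hypothesis then forces $\Phi|_W \in \I(\boldsymbol\iota)$. By the first step there is a $\psi$ with $\Phi|_W = (\iota \tt \Id_T)\circ \psi$ as global sections, hence as bundle maps, so the image of $\Phi|_W$ is contained in $W \tt T$. But this image is exactly $\Phi(W)$, whence $\Phi(W) \subseteq W \tt T$; that is, $W$ is $\Phi$-invariant. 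Since $\Phi$ was arbitrary, the conclusion follows.

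The main obstacle---really the only point demanding care---is the passage from an equality of global sections to the corresponding containment of bundle images, together with the identification of $W \tt T$ with its image in $V \tt T$. This is where the hypothesis that $W$ is a genuine \emph{sub-bundle} of $V$ (rather than a mere subsheaf) is used: it guarantees that $W \tt T$ is a sub-bundle of $V \tt T$ and that $\iota \tt \Id_T$ is a fibrewise injection with image exactly $W \tt T$, so no issues of torsion or saturation intrude on the factorization. I would emphasize that nothing beyond this is required: neither a stability hypothesis nor the integrability condition $\Phi \wedge \Phi = 0$ plays any role, so the lemma is a purely formal consequence of the definitions, providing a criterion under which \emph{every} Higgs field preserves $W$.
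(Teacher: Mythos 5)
Your argument is correct and is essentially the paper's own proof: restrict an arbitrary $\Phi$ to $W$, note that $\Phi|_W \in S \subseteq \I(\boldsymbol\iota)$, and factor it through $\iota \tt \Id_T$ to conclude $\Phi(W) \subseteq W \tt T$. You simply spell out the identification of $\I(\boldsymbol\iota)$ with the sections landing in $W \tt T$, which the paper leaves implicit.
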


\begin{proof}
Take $\Phi \in \HH^0(X;\End V \otimes T)$ and consider $\Phi |_{W}$, which is an element of $S$. Since $S \subseteq \I(\boldsymbol \iota)$, we must have that $\Phi|_{W}=\boldsymbol \iota(\psi)$ for some $\psi \in \HH^0(X;\Hom(W,W \otimes T))$. 
Hence $W$ is $\Phi$-invariant.
\end{proof}


\begin{lemma}\label{indeedsemistable}
Let $E=G_1 \p G_2$ be a decomposable rank 2 bundle over $X$. If $\mu(G_1) > \mu(E)$ and $\HH^0(G_1^{\lor} \tt G_2 \tt T) \neq 0$, then there exists a Higgs field $\Phi$ such that $(E,\Phi)$ is semistable. Moreover, any Higgs field with non-zero $(2,1)$-entry makes $(E, \Phi)$ into a semistable pair.
\end{lemma}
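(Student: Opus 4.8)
The plan is to write the Higgs field in matrix form relative to the splitting $E=G_1 \p G_2$ and read off that its $(2,1)$-entry is exactly a section of the space assumed nonzero. Concretely, any $\Phi \in \HH^0(\End E \tt T)$ decomposes as
\[
\Phi=\left(\begin{array}{cc} A & B \\ C & D \end{array}\right),
\]
where $C \in \HH^0(G_1^\lor \tt G_2 \tt T)$ is the $(2,1)$-entry. For the existence statement I would first exhibit one integrable Higgs field with $C\neq 0$: pick any nonzero $C \in \HH^0(G_1^\lor \tt G_2 \tt T)$ (possible by hypothesis) and set $A=B=D=0$. Working locally as in Remark~\ref{superwedge}, both matrix components of this $\Phi$ are strictly lower triangular, hence commute, so $\Phi \wedge \Phi=0$ and $\Phi$ is a genuine Higgs field. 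Thus the existence claim reduces to the ``moreover'' statement, which I would prove next.

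For the ``moreover'' statement, let $\Phi$ be any Higgs field with $C\neq 0$; I plan to show $(E,\Phi)$ is semistable by contradiction. By the remark following Definition~\ref{definitionstabilityofcH}, it suffices to rule out a $\Phi$-invariant sub-line bundle $L \subseteq E$ with $\mu(L)>\mu(E)$. First I record the slope bookkeeping: since $G_1,G_2$ are line bundles, $\mu(E)=\tfrac12(\mu(G_1)+\mu(G_2))$, so the hypothesis $\mu(G_1)>\mu(E)$ forces $\mu(G_1)>\mu(G_2)$ and hence $\mu(E)>\mu(G_2)$.

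The next step is to locate such an $L$ inside $G_1$. Consider the composite $L \hookrightarrow E \twoheadrightarrow G_2$. If it were nonzero it would be a nonzero map of line bundles, so $L^\lor \tt G_2$ would carry a nonzero section, forcing $\deg_H L \leq \deg_H G_2$, i.e.\ $\mu(L)\leq\mu(G_2)<\mu(E)$, contradicting $\mu(L)>\mu(E)$. Hence the composite vanishes and $L \subseteq \K(E \to G_2)=G_1$. Now $C$, viewed as a sheaf map $G_1 \to G_2 \tt T$, is nonzero with torsion-free source of rank $1$, hence injective; therefore its restriction $C|_L$ to the nonzero subsheaf $L$ is again injective, in particular nonzero. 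On the other hand, $\Phi$-invariance of $L \subseteq G_1$ means $\Phi(L) \subseteq L \tt T \subseteq G_1 \tt T$, and reading off the $G_2 \tt T$-component of $\Phi(L)$ yields precisely $C|_L=0$. This contradiction shows no such $L$ exists, so $(E,\Phi)$ is semistable.

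The main obstacle --- really the only nontrivial point --- is the injectivity argument showing $C|_L \neq 0$: one must rule out a destabilizing $L$ slipping into the locus where $C$ degenerates. This is handled cleanly by the observation that a nonzero map out of a line bundle is automatically injective as a sheaf map, so no nonzero subsheaf of $G_1$ can lie in its kernel. The remaining ingredients --- the slope inequality coming from effective divisors and the reduction to sub-line bundles --- are standard for a rank $2$ bundle on a surface with an ample polarization.
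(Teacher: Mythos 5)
Your proof is correct and follows essentially the same route as the paper: a destabilizing sub-line bundle is forced into $G_1$, and a non-zero $(2,1)$-entry $C$ obstructs $\Phi$-invariance. The only differences are that you prove directly (via the slope/effective-divisor argument and the injectivity of a non-zero map out of a line bundle) the fact that the paper cites from Friedman --- namely that only (subsheaves of) $G_1$ can destabilize --- and that you explicitly verify integrability of the strictly lower-triangular Higgs field, a point the paper leaves implicit.
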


\begin{proof}
Note that having $\mu(G_1) > \mu(E)$ implies that $E$ is unstable, with $G_1$ being the unique sub-line bundle that destabilizes $E$ (see \cite[Chapter~4]{Fr}). Any Higgs field is an element of $\HH^0(\End_0 E \tt T)$ which is integrable. In particular,
\begin{displaymath}
\Phi=\left(\begin{array}{cc}
A & B \\
C & -A \\
\end{array}\right),
\end{displaymath}
with $A \in \HH^0(T)$, $B \in \HH^0(G_1 \tt G_2^{\lor} \tt T)$ and $C \in \HH^0(G_1^{\lor} \tt G_2 \tt T)$. Since $\HH^0(G_1^{\lor} \tt G_2 \tt T) \neq 0$, there exists $\Phi$ with non-zero $C$. In that case, we have that $G_1$ is not $\Phi$-invariant, and so $(E,\Phi)$ is semistable. 
\end{proof}

\section{On bundles over $\P \times \P$.}

In this section we recall some of the properties of (rank 1 and 2) vector bundles over $\P \times \P$ that will be relevant for us. Some good references on the subject are \cite{AB1, AB2, ABM, AM, Br}, which treat the topic in the more general setting of Hirzebruch surfaces. 
From now on, $C_0$ will denote a section of $\pr_1: \P \times \P \to \P$ such that $C_0^2 = 0$, and $F$ will denote a general fibre of $\pr_1$. These two divisors freely generate $\Pic(\P \times \P)$. Recall that a divisor $aC_0 + bF$ on $\P \times \P$ is ample (equivalently, very ample) if and only if $a > 0$ and $b > 0$ (see \cite[Chapter~5]{H}). 

Let $\pr_i$ denote the projection from $\P \times \P$ onto the $i$-th copy of $\P$, then we let
\begin{displaymath}
\OO(a,b):=\pr_1^*\OOO(a) \tt \pr_2^*\OOO(b),
\end{displaymath}
denote the line bundle over $\P \times \P$ corresponding to the divisor $bC_0+aF$. The tangent bundle $T$ of $\P \times \P$ is the rank 2 decomposable bundle:
\begin{displaymath}
T=\OO(2,0) \p \OO(0,2).
\end{displaymath}

\begin{remark}\label{vanishingcohomology}
We remind the reader the conditions on $a$ and $b$ for the cohomology groups of a line bundle over $\P \times \P$ to vanish:

\begin{displaymath}
\begin{array}{ccc}

\vspace{.05in}

\HH^0(\P \times \P, \OO(a,b)) & = 0 & \text{ if and only if } a<0 \text{ or } b<0. \\ 
\vspace{.05in}

\HH^1(\P \times \P, \OO(a,b)) & = 0 & \text{ if and only if } a<0 \text{ and } b<0, \text{ or } a \geq -1 \text{ and } b \geq -1. \\

\HH^2(\P \times \P, \OO(a,b)) & = 0 & \text{ if and only if } a \geq -1 \text{ or } b \geq -1.
\end{array}
\end{displaymath} 
\end{remark}

From now on, unless otherwise specified, the notation $\HH^i(\mathcal{F})$, where $\mathcal{F}$ is a coherent torsion-free sheaf over $\P \times \P$, will stand for $\HH^i(\P \times \P; \mathcal{F})$.

A rank 2 bundle $E$ over $\P \times \P$ is always an extension of the form
\begin{displaymath}
0 \to L_1 \to E \to L_2 \tt I_Z \to 0,
\end{displaymath}
where $Z$ is a finite set of points in $\P \times \P$. In this case, the Chern classes of $E$ are given by
\begin{displaymath}
\begin{array}{c}
c_1(E)=c_1(L_1) + c_1(L_2), \\
c_2(E)=c_1(L_1) \cdot c_1(L_2) + |Z|.
\end{array}
\end{displaymath}
Besides the Chern classes, which determine the topological type of $E$, there are two numerical invariants describing it as an extension in a canonical manner. The first invariant $d_E$ is defined by the splitting type on the general fibre $F$: if $E_{|F} \cong \OOO(d) \p \OOO(d')$ with $d \geq d'$, then $d_E=d$. The second invariant $r_E$ is obtained from a push-forward as follows. Note that the bundle $\pi_*(E(0,-d))$ is either of rank one or two, according to whether $d>d'$ or $d=d'$, respectively. If $d>d'$, we put $r_E=r=\deg((\pr_1)_*(E(0,-d)))$. If $d=d'$, then $(\pr_1)_*(E(0,-d))=\OOO(r) \p \OOO(s)$ with $r \geq s$ and we put $r_E=r$.

Thus, a rank 2 vector bundle $E$ with numerical invariants $d$ and $r$ can be expressed as an extension
\begin{equation}\label{ABM}
0 \rightarrow \OO(r,d) \rightarrow E \rightarrow \OO(r',d') \otimes I_Z \rightarrow 0, 
\end{equation}
where $Z$ is a finite set of points in $\P \times \P$. This extension is unique if either $d > d'$ or $d=d'$ and $s < r$, where $s$ is the extra invariant described above. Moreover, if $c_1=\alpha C_0 + \beta F$, then
\begin{displaymath}
|Z|=\ell(c_1,c_2,d,r):=c_2- \alpha r -\beta d + 2dr.
\end{displaymath}


These numerical invariants help us to better understand rank 2 bundles over $\P \times \P$. Indeed,  let
\begin{displaymath}
M(c_1, c_2, d, r)=\{E \to \P \times \P: c_1(E)=c_1, c_2(E)=c_2, d_E=d, r_E=r \}/\sim,
\end{displaymath}
where $\sim$ denotes the equivalence relation of vector bundle isomorphism, be the coarse moduli space of rank 2 bundles with fixed Chern classes $c_1$ and $c_2$, and fixed numerical invariants $d$ and $r$. The following theorem tells us when this moduli space is non-empty (see \cite[Theorem~2.1]{ABM}, or for more details \cite{AB2}). 

\begin{theorem}\label{ABM1}
Put $c_1=\alpha C_0+\beta F$ . The set $M(c_1, c_2, d, r)$ is non-empty if and only if $\ell:=\ell(c_1,c_2,d,r) \geq 0$ and one of the following conditions is satisfied:
\begin{enumerate}
\item $2d > \alpha$, or
\item $2d = \alpha$, $\beta-2r \leq \ell$.
\end{enumerate}
\end{theorem}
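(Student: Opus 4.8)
=== PROOF PROPOSAL ===

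\textbf{Overview of the approach.} The plan is to prove Theorem~\ref{ABM1} by analyzing the extension~(\ref{ABM}) directly: a rank~2 bundle with the prescribed invariants exists if and only if we can realize it as an extension of $\OO(r',d') \tt I_Z$ by $\OO(r,d)$ for an admissible length-$\ell$ zero-cycle $Z$, while simultaneously guaranteeing that the invariants $d_E$ and $r_E$ of the resulting bundle are exactly $d$ and $r$ (and not something larger). The necessity of $\ell \geq 0$ is immediate since $\ell = |Z|$ must be a non-negative integer, so the real content is in the conditions~(1) and (2), which encode precisely when the generic extension does not accidentally split off a larger sub-line bundle that would raise $d_E$ or $r_E$.

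\textbf{Necessity.} First I would establish that the conditions are necessary. Given any $E$ with invariants $(d,r)$, the definition of $d_E$ via the splitting type $E_{|F} \cong \OOO(d) \p \OOO(d')$ with $d \geq d'$ forces $d \geq d'$, i.e. (since $d+d' = \beta$ is fixed by $c_1$) $2d \geq \beta$. More to the point, $c_1 = \alpha C_0 + \beta F$ means the fibre degree is $\beta$ and the section-direction degree is $\alpha$; so $d + d' = \beta$ and $r + r' = \alpha$. The inequality $2d \geq \alpha$ in the statement (as opposed to $\geq \beta$) tells me I should be careful about the convention relating $(a,b)$-coordinates to the $(C_0,F)$-basis, since $\OO(a,b)$ corresponds to $bC_0 + aF$; I would unwind this bookkeeping at the outset so that the roles of $d$ (fibre splitting) and $r$ (push-forward degree) are matched to the correct Chern-class component. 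The boundary case $2d = \alpha$ is where $d = d'$ is possible; there the push-forward $(\pr_1)_*(E(0,-d)) = \OOO(r) \p \OOO(s)$ is rank~2 and the secondary invariant $s$ enters, giving the constraint $\beta - 2r \leq \ell$ once one computes $s$ in terms of the other data and imposes $s \leq r$.

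\textbf{Sufficiency.} For the converse, given $\ell \geq 0$ and condition (1) or (2), I would construct $E$ explicitly. Fix a set $Z$ of $\ell$ general points and consider extensions
\begin{displaymath}
0 \rightarrow \OO(r,d) \rightarrow E \rightarrow \OO(r',d') \tt I_Z \rightarrow 0
\end{displaymath}
classified by $\Ext^1(\OO(r',d') \tt I_Z, \OO(r,d))$. I would show this $\mathrm{Ext}$ group is non-trivial under the stated numerical hypotheses (via the local-to-global spectral sequence, using the cohomology-vanishing table in Remark~\ref{vanishingcohomology} together with the contribution $\HH^0$ of the $\mathcal{E}xt$-sheaf supported on $Z$), so that a genuine bundle extension exists and $E$ is locally free. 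The delicate point is to verify that a \emph{generic} such extension has exactly the invariants $d_E = d$ and $r_E = r$: one must check that no sub-line bundle of fibre-degree exceeding $d$, nor (in the boundary case) push-forward degree exceeding $r$, splits off. This is exactly where conditions (1) and (2) do their work, and I would argue it by restricting to the general fibre $F$ and to the push-forward, showing that the extension class restricts non-trivially precisely when the inequalities hold.

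\textbf{Main obstacle.} The hard part will be the sufficiency direction's genericity argument, namely controlling $d_E$ and $r_E$ of the constructed extension rather than merely producing \emph{some} locally free extension. Producing a non-split extension is routine cohomology; ensuring it does not have an unexpectedly large destabilizing sub-line bundle (which would bump the invariants up out of the stratum $M(c_1,c_2,d,r)$) requires a careful openness/genericity argument, and the boundary stratum $2d = \alpha$ with its auxiliary invariant $s$ is where the bookkeeping is most likely to go wrong. I expect this is why the authors cite \cite{AB2} for the detailed proof, and in my own write-up I would lean on the fibrewise splitting-type semicontinuity to pin down $d_E$ and a Leray/push-forward computation to pin down $r_E$.
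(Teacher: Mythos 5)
First, a point of order: the paper does not prove Theorem~\ref{ABM1} at all --- it is quoted from \cite[Theorem~2.1]{ABM}, with \cite{AB2} cited for the details --- so there is no in-paper proof to measure your attempt against. Judged on its own terms, your proposal identifies the right skeleton (necessity of $\ell \geq 0$ from $\ell = |Z|$, necessity of $2d \geq \alpha$ from the fibre splitting type, a separate analysis of the boundary stratum $2d=\alpha$ via the secondary invariant $s$, and an explicit extension construction for sufficiency), and this is indeed the shape of the argument in the cited sources. But as written it is a plan rather than a proof, and the two steps that carry all the content are exactly the ones you defer.

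Concretely: (i) you never derive the inequality $\beta - 2r \leq \ell$. In the boundary case $d=d'$ one must actually compute $(\pr_1)_*(E(0,-d)) = \OOO(r)\p\OOO(s)$ from the pushed-forward extension $0 \to \OOO(r) \to (\pr_1)_*(E(0,-d)) \to (\pr_1)_*(\OO(\beta-r,0)\tt I_Z) \to 0$ (exact because $R^1(\pr_1)_*\OO(r,0)=0$), identify $s$ in terms of $\beta-r$ and the length of $Z$ --- the middle term's quotient is a subsheaf of $\OOO(\beta-r)$ whose colength is controlled by $\ell$ --- and then impose $s \leq r$; the phrase ``once one computes $s$ in terms of the other data'' is precisely the missing computation. (ii) For sufficiency you correctly note that producing a non-split extension is routine and that the real issue is showing a generic extension has invariants exactly $(d_E,r_E)=(d,r)$, but you give no argument for this; that genericity statement \emph{is} the theorem, so the proposal does not yet contain a proof of the ``if'' direction. (iii) A smaller but consequential slip: with the paper's convention $\OO(a,b)=\pr_1^*\OOO(a)\tt\pr_2^*\OOO(b)$ corresponding to the divisor $bC_0+aF$, the sub-line bundle $\OO(r,d)$ has first Chern class $dC_0+rF$, so $d+d'=\alpha$ and $r+r'=\beta$; your opening identification $d+d'=\beta$, $r+r'=\alpha$ is backwards, and although you flag the ambiguity you do not resolve it --- the entire case division of the theorem hinges on which Chern component $2d$ is compared against, so this must be settled, not merely noted.
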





Let us finish this section with a remark on the integrability condition of the Higgs fields in our setting.

\begin{remark}
If $E$ is a rank 2 bundle over $\P \times \P$, then any $\Phi \in \HH^0(\End_0 E \tt T)$ is of the form $\Phi=\Phi_1 + \Phi_2$ with $\Phi_1 \in \HH^0(\End_0 E(2,0))$ and $\Phi_2 \in \HH^0(\End_0 E(0,2))$. Working locally on an open set, where $\End_0 E$ and $T$ are trivial, we can write

\begin{displaymath}
\Phi=\Phi_1+\Phi_2=\left(\begin{array}{cc}
A_1 & B_1 \\
C_1 & -A_1 \\
\end{array}\right)
+
\left(\begin{array}{cc}
A_2 & B_2 \\
C_2 & -A_2 \\
\end{array}\right),
\end{displaymath}
where $A_i, B_i, C_i$ are complex valued functions for $i=1, 2$. Also, on this open set, 
\begin{displaymath}
\Phi \wedge \Phi=2[\Phi_1,\Phi_2],
\end{displaymath} 
so we can locally write
\begin{displaymath}
\Phi \wedge \Phi =2\left(\begin{array}{cc}
B_1C_2-C_1B_2 & 2(A_1B_2-B_1A_2) \\
2(C_1A_2-A_1C_2) & -(B_1C_2-C_1B_2) \\
\end{array}\right).
\end{displaymath}
Thus, we see that $\Phi$ is an (integrable) Higgs field if and only if, in each local trivialization, we have that 
\begin{align*}
B_1C_2 & = C_1B_2  \\  \numberthis \label{integrability}
A_1B_2 & = B_1A_2   \\ 
C_1A_2 & = A_1C_2.
\end{align*}
\end{remark}


\section{On the existence of non-trivial co-Higgs bundles over $\P \times \P$}\label{section4}

In this section, we give a complete answer to the question: Fixing the polarization $H=C_0+F$ in the ample cone of $\P \times \P$, for which values of $c_1$ and $c_2$ are the moduli spaces of semistable rank 2 co-Higgs bundles over $\P \times \P$ non-empty? (See Theorem \ref{fullanswernon-emptiness}). 

From now on, we will fix the standard polarization $H=C_0+F$, which naturally generalizes the notion of degree from $\P$ to $\P \times \P$. Although this may seem as a constraint, it does not impose a severe restriction for lower values of $c_2$, which is the main focus of Section~\ref{section5} where we give explicit descriptions of some moduli spaces. Indeed, we will see that a necessary condition for the existence of semistable co-Higgs bundles over $\P \times \P$ is that $c_2 \geq 0$. Now, when $c_2=0$, one can show that there is only one chamber in the ample cone, so there really is no loss of generality by choosing the standard polarization in this case. When $c_2=1$, there are at most two chambers, and one can use similar arguments to the one presented here to, after fixing an appropriate ample divisor, construct co-Higgs bundles which are semistable with respect to this chosen polarization. For more details on this, we refer the reader to \cite[Chapter~3]{AVC}.

The following lemma allows us to restrict ourselves to certain first Chern classes. 

\begin{lemma}\label{decomposable}
Let $E$ be a rank 2 vector bundle over $\P \times \P$. Then there is a line bundle $L$ such that $c_1(E \tt L)=0$ or $c_1(E \tt L)=-F$ or $c_1(E \tt L)=-C_0$ or $c_1(E)=-C_0-F$.
\end{lemma}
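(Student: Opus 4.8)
The plan is to reduce everything to a parity computation on the coefficients of $c_1$ with respect to the generators $C_0$ and $F$ of $\Pic(\P \times \P)$. First I would record the behaviour of the first Chern class under tensoring a rank $2$ bundle by a line bundle: since $E$ has rank $2$, one has $c_1(E \tt L) = c_1(E) + 2\,c_1(L)$. Writing $c_1(E) = \alpha C_0 + \beta F$ and $c_1(L) = m C_0 + n F$ with $\alpha, \beta, m, n \in \ZZ$, this gives
\[
c_1(E \tt L) = (\alpha + 2m)\,C_0 + (\beta + 2n)\,F.
\]

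The key observation is then that tensoring by $L$ shifts each coefficient by an even integer, so it leaves the residues of $\alpha$ and $\beta$ modulo $2$ unchanged while letting us move each coefficient freely within its residue class. Since every residue class modulo $2$ contains exactly one of the integers $0$ and $-1$, I can choose $m$ so that $\alpha + 2m \in \{0, -1\}$ (take $m = -\alpha/2$ if $\alpha$ is even and $m = -(\alpha+1)/2$ if $\alpha$ is odd), and independently choose $n$ so that $\beta + 2n \in \{0, -1\}$. Because $C_0$ and $F$ freely generate $\Pic(\P \times \P)$, such a line bundle $L$ genuinely exists for any choice of $m$ and $n$.

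Finally I would enumerate the four resulting possibilities for the pair $(\alpha + 2m, \beta + 2n)$, namely $(0,0)$, $(-1,0)$, $(0,-1)$ and $(-1,-1)$, which correspond respectively to $c_1(E \tt L)$ being $0$, $-C_0$, $-F$ and $-C_0 - F$; this is exactly the list in the statement. There is really no hard step here: the only point to verify is the Chern-class formula $c_1(E \tt L) = c_1(E) + 2\,c_1(L)$ for rank $2$ bundles, after which the result is a one-line argument about reducing two integers modulo $2$. I would therefore expect the write-up to be short, with the main (minor) care going into stating the normalization so that it matches the four cases used later to restrict attention to the first Chern classes $0$, $-F$, $-C_0$ and $-C_0-F$.
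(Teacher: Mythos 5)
Your proof is correct and is essentially the paper's own argument: the paper likewise writes $c_1(E)=\alpha C_0+\beta F$ and exhibits the line bundle $L$ explicitly according to the parities of $\alpha$ and $\beta$, which is exactly your choice of $m$ and $n$ making $\alpha+2m,\beta+2n\in\{0,-1\}$. No substantive difference.
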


\begin{proof}
Let $c_1(E)=\alpha C_0 + \beta F$. The proof is immediate after choosing $L$ to be one of 
$$\!\OO\! \left(\!- \frac{\! \beta}{2} \!,\! -\frac{\! \alpha}{2}\! \right)\!, \OO \! \left(\! -\! \left(\! \frac{1+\beta}{2}\! \right) \!,\! -\frac{\alpha}{2}\! \right)\!, \OO\! \left(\! -\frac{\beta}{2}\!,\!- \! \left(\! \frac{1+\alpha}{2}\right)\! \right)\!, \OO\! \left(\!-\! \left(\! \frac{1+\beta}{2}\!\right)\!,\!-\!\left(\! \frac{1+\alpha}{2}\! \right)\! \right)\!,$$
depending on the parity of $\alpha$ and $\beta$.

\end{proof}

When we work with a rank 2 vector bundle $E$ over $\P \times \P$, and we tensor it by a line bundle to obtain one of the first Chern classes $0, -C_0, -F$ or $-C_0-F$, which from now on will be referred to as \emph{reduced classes}, we also modify its second Chern class. That is,
\begin{displaymath}
c_2(E \tt L)= c_2(E) + c_1(E) \cdot c_1(L) + c_1(L)^2,
\end{displaymath}
and so we have:

\begin{corollary}\label{modifiedc_2}
Let $E$ be a rank 2 vector bundle over $\P \times \P$ with $c_1(E)=\alpha C_0 + \beta F$ and $c_2(E)=\gamma$. Then, for any line bundle $L$ we have
\begin{enumerate}
\item If $c_1(E \tt L)=0, -F$ or $-C_0$, then $c_2(E \tt L)= \gamma - \frac{\alpha\beta}{2}$.
\item If $c_1(E \tt L)=-C_0-F$, then $c_2(E \tt L)= \gamma + \frac{1-\alpha\beta}{2}$.\end{enumerate}
\end{corollary}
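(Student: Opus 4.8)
The plan is to deduce the statement directly from the displayed tensoring formula $c_2(E\tt L) = c_2(E) + c_1(E)\cdot c_1(L) + c_1(L)^2$ preceding it, substituting the four explicit choices of $L$ furnished by the proof of Lemma~\ref{decomposable}. The only geometric ingredient required is the intersection pairing on $\P\times\P$: from $C_0^2 = F^2 = 0$ and $C_0\cdot F = 1$ one gets $(a_1C_0+b_1F)\cdot(a_2C_0+b_2F) = a_1b_2 + a_2b_1$, and in particular $(aC_0+bF)^2 = 2ab$.

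First I would read off the first Chern classes of the four line bundles of Lemma~\ref{decomposable}. Since $\OO(a,b)$ corresponds to the divisor $bC_0+aF$, these are
$$c_1(L) = -\tfrac{\alpha}{2}C_0 - \tfrac{\beta}{2}F,\;\; -\tfrac{\alpha}{2}C_0 - \tfrac{1+\beta}{2}F,\;\; -\tfrac{1+\alpha}{2}C_0 - \tfrac{\beta}{2}F,\;\; -\tfrac{1+\alpha}{2}C_0 - \tfrac{1+\beta}{2}F,$$
realizing $c_1(E\tt L) = 0,\,-F,\,-C_0,\,-C_0-F$ respectively.

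Then the result follows by direct substitution into the tensoring formula. For $c_1(E\tt L)=0$ one finds $c_1(E)\cdot c_1(L) = -\alpha\beta$ and $c_1(L)^2 = \tfrac{\alpha\beta}{2}$, giving $c_2(E\tt L) = \gamma - \tfrac{\alpha\beta}{2}$; in the cases $-F$ and $-C_0$ the substitution introduces extra terms (namely $\pm\tfrac{\alpha}{2}$, resp.\ $\pm\tfrac{\beta}{2}$) in $c_1(E)\cdot c_1(L)$ and in $c_1(L)^2$ that cancel in the sum, again yielding $\gamma - \tfrac{\alpha\beta}{2}$ and hence part~(1). For $c_1(E\tt L)=-C_0-F$ the same computation gives $c_1(E)\cdot c_1(L) = -\tfrac{\alpha+\beta}{2} - \alpha\beta$ and $c_1(L)^2 = \tfrac{1+\alpha+\beta+\alpha\beta}{2}$, and adding these to $\gamma$ simplifies to $\gamma + \tfrac{1-\alpha\beta}{2}$, which is part~(2).

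There is no substantive obstacle; the corollary is a formal consequence of Lemma~\ref{decomposable} and the tensoring formula. The only point requiring care is bookkeeping --- keeping the convention $\OO(a,b)\leftrightarrow bC_0+aF$ straight when extracting $c_1(L)$, and verifying that the three reduced classes $0,-F,-C_0$ genuinely collapse to the single value $\gamma - \tfrac{\alpha\beta}{2}$ once the parity-dependent terms cancel. I would verify each of these cancellations explicitly, checking in passing that the outputs are integral ($\alpha\beta$ is even in part~(1) and $1-\alpha\beta$ is even in part~(2)), to confirm the stated formulas.
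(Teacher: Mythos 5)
Your proposal is correct and is exactly the computation the paper intends: the corollary is stated as an immediate consequence of the displayed formula $c_2(E\tt L)=c_2(E)+c_1(E)\cdot c_1(L)+c_1(L)^2$ together with the explicit line bundles from the proof of Lemma~\ref{decomposable}, and your substitutions and cancellations (including the integrality check) all check out.
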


Let us now work with the reduced classes, and give necessary conditions on $c_2$ in order to have a semistable co-Higgs pair.

\begin{theorem}\label{extid}
Let $E$ be a rank 2 vector bundle over $\P \times \P$. Suppose $(E,\Phi)$ is semistable. 
\begin{enumerate}
\item If $c_1(E)=0, -F$ or $-C_0$, then $c_2(E) \geq 0$. 
\item If $c_1(E)=-C_0-F$, then $c_2(E) \geq 1$. 
\end{enumerate}
Furthermore, in every case, when equality holds, $E$ is an extension of line bundles.
\end{theorem}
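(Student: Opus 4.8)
=== PROOF PROPOSAL ===

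\medskip

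\noindent\textbf{The plan.} The idea is to exploit the extension structure \eqref{ABM} of a rank 2 bundle on $\P \times \P$ together with the semistability of $(E,\Phi)$. Since tensoring by a line bundle does not affect semistability, and since we are already working with the reduced classes, the $H$-slope of $E$ is pinned down: for $c_1 = 0$ we have $\mu(E) = 0$, for $c_1 = -F$ or $-C_0$ we have $\mu(E) = -\tfrac12$, and for $c_1 = -C_0 - F$ we have $\mu(E) = -1$ (recall $H = C_0 + F$, so $\deg_H \OO(r,d) = r + d$). The strategy is to assume toward a contradiction that $c_2$ is below the claimed bound, and to produce from the extension a $\Phi$-invariant sub-line bundle $W$ with $\mu(W) > \mu(E)$, contradicting semistability. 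The sub-line bundle $\OO(r,d)$ appearing in \eqref{ABM} is the natural candidate, but it need not be $\Phi$-invariant; the main work is to force invariance.

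\medskip

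\noindent\textbf{Key steps.} First I would write $E$ as an extension $0 \to L_1 \to E \to L_2 \tt I_Z \to 0$ with $L_1 = \OO(r,d)$ and $L_2 = \OO(r',d')$, so that $c_2 = r r' + d d' + |Z|$ and $c_1 = (r+r')C_0 + \dots$ in the appropriate coordinates; here $|Z| \geq 0$. Next I would compute $c_2$ purely in terms of the splitting and use the constraint that $c_1$ is one of the four reduced classes to express everything in terms of a single integer parameter measuring how unbalanced $L_1$ is against $L_2$. The point is that if $c_2 < 0$ (resp. $c_2 < 1$ in the last case), then $|Z| \geq 0$ forces $L_1 \cdot L_2 = r r' + d d'$ to be sufficiently negative, which in turn forces $\mu(L_1)$ to exceed $\mu(E)$ by enough that $L_1$ becomes a destabilizing sub-line bundle \emph{regardless} of whether it is $\Phi$-invariant. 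To promote destabilization to a genuine contradiction, I would invoke Lemma \ref{isb}: one checks that the relevant restriction space $S$ of Higgs fields on $E$ lands inside the image of $\boldsymbol\iota$, which amounts to a vanishing statement $\HH^0(L_1^{\lor} \tt L_2 \tt T) = 0$ obtained from Remark \ref{vanishingcohomology} once the numerical gap between $L_1$ and $L_2$ is large enough. That vanishing makes $L_1$ automatically $\Phi$-invariant for every $\Phi$, and then $\mu(L_1) > \mu(E)$ directly contradicts semistability of $(E,\Phi)$, establishing the bounds in (1) and (2).

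\medskip

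\noindent\textbf{The equality case.} For the final clause, when $c_2$ equals the bound ($0$ or $1$), the computation above degenerates: $|Z| = 0$ and the numerical invariants are forced into the critical configuration where $L_1 \cdot L_2$ attains the extremal value. Having $|Z| = 0$ already means $E$ is a genuine extension $0 \to L_1 \to E \to L_2 \to 0$ of line bundles with no twisting by an ideal sheaf, which is exactly the assertion. I would verify that the extremal numerics leave no room for $|Z| > 0$ by checking that any positive $|Z|$ would push $c_2$ strictly above the bound, so equality pins down $Z = \emptyset$.

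\medskip

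\noindent\textbf{Main obstacle.} The delicate point is the boundary between "destabilizing but possibly not $\Phi$-invariant" and "forced to be $\Phi$-invariant via Lemma \ref{isb}." The vanishing $\HH^0(L_1^{\lor} \tt L_2 \tt T) = 0$ need not hold right at the critical slope gap — indeed near equality one expects $\HH^0$ of the off-diagonal twisted by $T = \OO(2,0) \p \OO(0,2)$ to be nonzero, so Lemma \ref{isb} may fail to apply and $L_1$ might genuinely admit non-invariant Higgs fields. The careful bookkeeping is therefore to separate the strict range (where the gap is wide enough that even after twisting by $\OO(2,0)$ and $\OO(0,2)$ the relevant $\HH^0$ still vanishes, so invariance is automatic) from the borderline, and to confirm that in the borderline the slope inequality $\mu(L_1) \le \mu(E)$ is already consistent with semistability, so no contradiction is needed there. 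Getting the parity cases for the four distinct reduced classes to all line up with the single clean bound ($c_2 \ge 0$ versus $c_2 \ge 1$) is where the argument must be run case by case.
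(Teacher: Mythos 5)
Your plan follows the paper's proof closely: write $E$ in the canonical form (\ref{ABM}), assume $c_2$ violates the bound, force the numerical invariants $d,r$ into a range where $\HH^0(\OO(r,d)^{\lor} \tt \OO(\beta-r,\alpha-d) \tt T \tt I_Z) = 0$, invoke Lemma \ref{isb} to make $\OO(r,d)$ automatically $\Phi$-invariant, and contradict semistability via $\mu(\OO(r,d)) > \mu(E)$. Your worry in the last paragraph is also resolved the way the paper resolves it: the vanishing is only needed (and only holds) under the contradiction hypothesis, not at the true boundary. Two points need repair, however.

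First, the intersection form on $\P \times \P$ is hyperbolic, not diagonal: with $L_1 = \OO(r,d)$ and $L_2 = \OO(r',d')$ one has $c_1(L_1)\cdot c_1(L_2) = rd' + r'd$, not $rr' + dd'$. Since your whole case analysis rests on ``$L_1\cdot L_2$ sufficiently negative,'' this slip propagates; the correct formula for the reduced class $\alpha C_0 + \beta F$ is $c_2 = d(\beta-2r) + r\alpha + |Z|$.

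Second, and more substantively, the equality case does not follow from numerics alone. You claim that ``any positive $|Z|$ would push $c_2$ strictly above the bound,'' but this is false: for $c_1 = 0$ the formula gives $c_2 = -2dr + |Z|$, so $d = r = 1$ and $|Z| = 2$ yields $c_2 = 0$ with $Z \neq \emptyset$. Such bundles exist; what rules them out is semistability, not arithmetic. The paper's contradiction hypothesis throughout is ``$c_2 <$ bound, \emph{or} $c_2 =$ bound and $\ell(Z) > 0$'': in the second alternative the invariants $d,r$ are again forced into the range where the cohomological vanishing holds, so the same invariance-plus-slope argument shows no semistable $\Phi$ exists on such an $E$. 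You need to run that second alternative through the full machinery rather than dismiss it numerically.
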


\begin{proof}
Let $c_1(E)=\alpha C_0 + \beta F$, where $(\alpha,\beta) \in \left\{(0,0),(0,-1),(-1,0), (-1,-1) \right\} $. Hence, by (\ref{ABM}), $E$ fits into an exact sequence of the form
\begin{equation}\label{ses01}
0 \rightarrow \OO(r,d) \rightarrow E \rightarrow \OO(\beta-r,\alpha-d) \otimes I_Z \rightarrow 0,
\end{equation}
where $Z$ is a finite set of points in $\P \times \P$. Then, $c_2(E)=d(\beta -2r) + r\alpha + \ell(Z)$.  

Let us now work by cases:
\begin{enumerate}
\item [(i)] $(\alpha,\beta)=(0,0)$: Since $E_{|F} \cong \OOO(d) \p \OOO(-d)$, we have that $d \geq 0$. In this case, $c_2(E)=-2dr+\ell(Z)$. Towards a contradiction, assume that $c_2(E) < 0$, or $c_2(E)=0$ and $\ell(Z) > 0$. We then have that $d>0$ and $r > 0$.
\item [(ii)] $(\alpha,\beta)=(0,-1)$: As in case (i), $d \geq 0$, but now $c_2(E)=d(-1-2r)+\ell(Z)$. Towards a contradiction, assume that $c_2(E) < 0$, or $c_2(E)=0$ and $\ell(Z) > 0$. We then have that $d>0$ and $r \geq 0$.
\item [(iii)] $(\alpha,\beta)=(-1,0)$: Since $E_{|F} \cong \OOO(d) \p \OOO(-1-d)$, we have that $d\geq -1-d$, and so $d \geq 0$. In this case, $c_2(E)=-r(2d+1)+\ell(Z)$. Towards a contradiction, assume that either $c_2(E) < 0$, or $c_2(E)=0$ and $\ell(Z) > 0$. We then have that $r > 0$.
\item [(iv)] $(\alpha,\beta)=(-1,-1)$: As in case (iii), $d \geq 0$, but now and $c_2(E)=d(-1-2r)-r+\ell(Z)$. Towards a contradiction, assume that either $c_2(E) < 1$, or $c_2(E)=1$ and $\ell(Z) > 0$. We then have that $r \geq 0$.
\end{enumerate}
Now, since $T=\OO(2,0) \p \OO(0,2)$, by plugging in the corresponding values of $(\alpha,\beta)$, and the corresponding bounds on $d$ and $r$ described in (i) to (iv) above, one can easily check that, in all four cases,
\begin{align*}
\HH^0(\OO(\beta -2r,\alpha -2d) \otimes T \otimes I_Z) & =  \HH^0(\OO(\beta -2r+2,\alpha -2d) \otimes I_Z) \\
& \p  \HH^0(\OO(\beta -2r,\alpha -2d+2) \otimes I_Z) \\
& = 0.
\end{align*}

By tensoring (\ref{ses01}) with $\OO(r,d)^{\lor} \otimes T$ and passing to the long exact sequence in cohomology, we get
\begin{displaymath}
0 \rightarrow \HH^0(T) \rightarrow \HH^0(\OO(r,d)^{\lor} \otimes E \otimes T) \rightarrow \HH^0(\OO(\beta -2r,\alpha -2d) \otimes T \otimes I_Z) \rightarrow \dots.
\end{displaymath}

However, since $\HH^0(\OO(\beta -2r,\alpha -2d) \otimes T \otimes I_Z)=0$ we get that $\HH^0(T) \cong \HH^0(\OO(r,d)^{\lor} \otimes E \otimes T)$, which, by Lemma \ref{isb}, implies that $\OO(r,d)$ is $\Phi$-invariant for any $\Phi \in$ H$^0(\End_0 E \otimes T)$. Furthermore, note that $\mu(\OO(r,d)) = r + d$, while $$\mu(E)=\frac{\alpha + \beta}{2}.$$
Thus, we have that
\begin{align*}
\mu(\OO(r,d)) - \mu(E) & =  r+d-\left(\frac{\alpha + \beta }{2} \right),
\end{align*}
which in all four cases, is a strictly positive number. This contradicts semistability of $(E,\Phi)$, and the result follows. 
\end{proof}

From now on, we will denote the moduli space of rank 2 semistable co-Higgs bundles with fixed Chern classes $c_1$ and $c_2$ by $\MM(c_1,c_2)$. We will soon see (Theorem \ref{nonemptinessmayorados}) that the necessary conditions imposed on $c_2$ to guarantee the existence of semistable co-Higgs pairs in $\MM(c_1, c_2)$, presented in Theorem \ref{extid} above, are indeed sufficient.

In Theorem \ref{nonemptinessmayorados}, in most cases, we show that whenever the moduli space $\MM(c_1,c_2)$ is non-empty, it actually contains a semistable bundle $E$, which we can clearly equip with the zero Higgs field in order to yield a semistable co-Higgs pair. However, we want to exhibit non-trivial co-Higgs bundles, showing that these objects do constitute an enlargement of the class of semistable bundles. In order to so, we show that $\HH^0(\End_0 E \tt T)= \HH^0(\End_0 E (2,0)) \p \HH^0(\End_0 E (0,2)) \neq 0$. Indeed, in all the examples we construct on the proof of this theorem, we will see that $\HH^0(\End_0 E (2,0)) \neq 0$. To prove this, we will need the proposition below. We first recall a basic fact about modules.

\begin{fact}\label{basicfactformodulos}
Let
\begin{displaymath}
0 \to A \xrightarrow{\iota} B_1 \p B_2 \xrightarrow{p} C \to 0
\end{displaymath}
be an exact sequence of $R$-modules. If $A \neq 0$ and $p_{|B_2}$ is injective, then $B_1 \neq 0$.
\end{fact}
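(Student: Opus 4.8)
The plan is to argue directly by tracking a single nonzero element of $A$ through the sequence. Exactness at $A$ means precisely that $\iota$ is injective, so I would first pick a nonzero element $a \in A$ (possible since $A \neq 0$) and note that $\iota(a) \neq 0$. Writing $\iota(a) = (b_1, b_2)$ with $b_1 \in B_1$ and $b_2 \in B_2$, exactness at the middle term gives $\I(\iota) \subseteq \K(p)$, and hence $p(\iota(a)) = 0$.

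The key step is to deduce that $b_1 \neq 0$. I would suppose instead that $b_1 = 0$; then $\iota(a) = (0,b_2)$ lies in the submodule $0 \oplus B_2 \cong B_2$, so that $p_{|B_2}(b_2) = p(0,b_2) = p(\iota(a)) = 0$. Injectivity of $p_{|B_2}$ then forces $b_2 = 0$, whence $\iota(a) = 0$, contradicting the choice of $a$. Therefore $b_1 \neq 0$, and in particular $B_1 \neq 0$, which is the claim.

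There is essentially no obstacle: the argument is a one-line diagram chase that consumes each hypothesis exactly once. The condition $A \neq 0$ supplies the element, exactness at $A$ guarantees the element survives under $\iota$, exactness at the middle yields $p\circ\iota = 0$, and injectivity of $p_{|B_2}$ closes the contradiction. If one prefers a contrapositive formulation, the same bookkeeping shows that $B_1 = 0$ would make $p = p_{|B_2}$ injective, forcing $\I(\iota) = \K(p) = 0$ and hence $A = 0$; I would present whichever of the two phrasings reads most cleanly in context.
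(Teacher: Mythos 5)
Your argument is correct: the element chase (pick $0 \neq a \in A$, use injectivity of $\iota$, exactness at the middle term, and injectivity of $p_{|B_2}$ on the second component to force a nonzero first component) is exactly the intended justification. The paper in fact states this as a ``Fact'' with no proof at all, so your write-up simply supplies the omitted one-line verification; either of your two phrasings (direct or contrapositive) is fine, and the contrapositive version, where $B_1=0$ makes $p$ injective and hence $A \cong \K(p) = 0$, is perhaps the cleanest.
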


\begin{proposition}\label{toshowHiggsfields}
Let $E$ be a rank 2 vector bundle over $\P \times \P$ that fits into the exact sequence
\begin{equation}\label{extension}
0 \to L_1 \xrightarrow{\iota} E \xrightarrow{p} L_2 \tt I_Z \to 0.
\end{equation}
If $\HH^0(L_2^{\lor} \tt E(2,0)) \neq 0$, then $\HH^0(\End_0 E (2,0)) \neq 0$.
\end{proposition}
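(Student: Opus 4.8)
The plan is to produce a nonzero trace-free twisted endomorphism by exploiting the decomposition $\HH^0(\End E(2,0)) = \HH^0(\OO(2,0)) \p \HH^0(\End_0 E(2,0))$ into scalar and trace-free parts (valid since we work over $\C$), and then to feed this into Fact \ref{basicfactformodulos} with $B_1 = \HH^0(\End_0 E(2,0))$ the trace-free part and $B_2 = \HH^0(\OO(2,0))$ the scalar part. The goal is exactly to show $B_1 \neq 0$.

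First I would tensor the defining extension (\ref{extension}) by the locally free sheaf $E^\lor(2,0)$; flatness keeps the sequence exact, and since $E \tt E^\lor = \End E$ it becomes
$$0 \to L_1 \tt E^\lor(2,0) \to \End E(2,0) \to L_2 \tt I_Z \tt E^\lor(2,0) \to 0.$$
Passing to the long exact sequence in cohomology and setting $C := \I(q)$, where $q$ is the induced map on global sections, yields a short exact sequence
$$0 \to \HH^0(L_1 \tt E^\lor(2,0)) \to \HH^0(\End E(2,0)) \xrightarrow{q} C \to 0,$$
which has precisely the shape required by Fact \ref{basicfactformodulos}, with $A = \HH^0(L_1 \tt E^\lor(2,0))$.

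It then remains to verify the two hypotheses of the Fact. For $A \neq 0$: since $E$ has rank $2$, one has $E^\lor \cong E \tt (\det E)^{\lor}$ with $\det E = L_1 \tt L_2$, so that $L_1 \tt E^\lor(2,0) \cong L_2^\lor \tt E(2,0)$; hence $A \cong \HH^0(L_2^\lor \tt E(2,0))$, which is nonzero by hypothesis. For the injectivity of $q$ restricted to the scalar part $B_2 = \HH^0(\OO(2,0))$: the sheaf map inducing $q$ is post-composition with the twist $p(2,0)$ of the projection $p$ from (\ref{extension}), so a scalar section $f \cdot \Id_E$ is sent to $f \cdot p$; since $p$ is surjective onto the torsion-free sheaf $L_2 \tt I_Z$ and multiplication by a nonzero $f$ is injective there, $f \cdot p \neq 0$ whenever $f \neq 0$. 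Applying Fact \ref{basicfactformodulos} then gives $B_1 = \HH^0(\End_0 E(2,0)) \neq 0$, as desired.

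I expect the main obstacle to be the bookkeeping in the third paragraph: concretely, the determinant identification $E^\lor \cong E \tt (\det E)^{\lor}$ that converts the ``wrong'' bundle $L_1 \tt E^\lor(2,0)$ appearing in the cohomology sequence into the hypothesis bundle $L_2^\lor \tt E(2,0)$, together with the care needed to confirm that the scalar part genuinely injects — that is, that $p$ survives the twist by $f$, which relies on torsion-freeness of $L_2 \tt I_Z$. Everything else is formal.
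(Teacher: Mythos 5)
Your proof is correct and follows essentially the same strategy as the paper: both reduce to Fact \ref{basicfactformodulos} applied to the long exact sequence of the twisted extension, using the splitting $\End E = \End_0 E \p \OO$ and checking that the scalar summand injects into the quotient. The only (cosmetic) difference is that the paper dualizes the extension and tensors by $E(2,0)$, so the kernel is literally $\HH^0(L_2^{\lor}\tt E(2,0))$ and the quotient map is $h\mapsto h\circ\iota$, whereas you tensor the original sequence by $E^{\lor}(2,0)$ and compensate with the rank-2 identification $E^{\lor}\cong E\tt(\det E)^{\lor}$; both injectivity checks are equally valid.
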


\begin{proof}
Start by taking the dual of the exact sequence (\ref{extension}), tensor it by $E(2,0)$ and pass to the exact sequence in cohomology to get
\begin{equation*}
0 \to \HH^0(L_2^{\lor} \tt E(2,0)) \to \HH^0(\End E(2,0)) \to \HH^0(L_1^{\lor} \tt E(2,0) \tt I_Z) \to \dots
\end{equation*}
The map $\HH^0(\End E \tt \OO(2,0)) \to \HH^0(L_1^{\lor} \tt E \tt I_Z \tt \OO(2,0))$ is the induced one from 
\begin{displaymath}
\epsilon \tt \Id_{\OO(2,0)}: \End E \tt \OO(2,0) \to (L_1^{\lor} \tt E \tt I_Z) \tt \OO(2,0),
\end{displaymath}
where $\epsilon$ takes $h$ to $h \circ \iota$. Writing $\End E = \End_0 E \p \OO$, and noting that $\Id_E$ generates $\OO$ in $\End E$, we get that the map induced by $\epsilon \tt \Id_{\OO(2,0)}$, 
restricted to $\HH^0(\OO \tt \OO(2,0))$, is injective. By Lemma \ref{basicfactformodulos}, $\HH^0(\End_0 E (2,0)) \neq 0$, as desired.
\end{proof}

The next lemma shows that some of the bundles that we construct in the proof of Theorem \ref{nonemptinessmayorados} are indeed stable.

\begin{lemma}\label{stabilityfor-Fand-C_0}
Let $E$ be such that it either fits into the exact sequence
\begin{displaymath}
0 \to \OO(-1,0) \to E \to I_Z \to 0,
\end{displaymath} 
with $\ell(Z) \geq 1$,
or 
\begin{displaymath}
0 \to \OO(-1,0) \to E \to \OO(1,-1) \tt I_Z \to 0,
\end{displaymath} 
with $\ell(Z) \geq 1$.
Then $E$ is stable.
\end{lemma}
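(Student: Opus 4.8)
The plan is to verify stability directly from the definition: since $E$ has rank $2$ over a surface, by the earlier remark it suffices to show that every sub-line bundle $M \hookrightarrow E$ satisfies $\mu(M) < \mu(E)$, where $\mu$ is the $H$-slope for $H = C_0 + F$. I first compute $\mu(E)$ in each case. In the first case $c_1(E) = c_1(\OO(-1,0)) + c_1(I_Z\text{-twisted }\OO) = -F$, so $\deg_H E = (-F)\cdot(C_0+F) = -1$ and $\mu(E) = -\tfrac12$; in the second case $c_1(E) = -F + (1,-1)\text{'s class}$, which again works out to $-C_0$ (recall $\OO(a,b)$ corresponds to $bC_0 + aF$), giving $\mu(E) = -\tfrac12$ as well. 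So in both cases a destabilizing sub-line bundle $M$ would need $\mu(M) \geq -\tfrac12$, i.e. $\deg_H M \geq 0$ since degrees are integers; equivalently, writing $M = \OO(a,b)$, the condition $\mu(M) \geq \mu(E)$ forces $a + b \geq 0$.

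Next I would analyze how such an $M$ can sit inside $E$. Given an inclusion $M \hookrightarrow E$, I compose with the projection $p \colon E \to L_2 \tt I_Z$ (with $L_2 = \OO$ or $\OO(1,-1)$ respectively). Either the composite $M \to L_2 \tt I_Z$ is zero, in which case $M$ factors through $L_1 = \OO(-1,0)$, forcing $M \hookrightarrow \OO(-1,0)$ and hence $\mu(M) \leq \mu(\OO(-1,0)) = -1 < \mu(E)$; or the composite is a nonzero map $M \to L_2 \tt I_Z$, which gives a nonzero section of $L_2 \tt M^{\lor} \tt I_Z$. I would then use the vanishing criteria recorded in Remark \ref{vanishingcohomology}, together with the constraint $\ell(Z) \geq 1$ (which says $I_Z$ is a proper ideal sheaf imposing genuine conditions), to bound the possible $M$. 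Concretely, a nonzero section of $L_2 \tt M^{\lor}$ forces $L_2 \tt M^{\lor} = \OO(c,d)$ with $c,d \geq 0$, and the presence of the nonempty $Z$ rules out the equality cases where $M$ would achieve $\mu(M) = \mu(E)$.

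The main obstacle I anticipate is handling the borderline sub-line bundles precisely, namely those $M$ with $\mu(M)$ equal to or just below $\mu(E) = -\tfrac12$; since this half-integer can never be attained by an integer degree, strict inequality is automatic once I rule out $\deg_H M \geq 0$, but I must still exclude every $M = \OO(a,b)$ with $a+b \geq 0$ from embedding into $E$. The key point, and where the hypothesis $\ell(Z) \geq 1$ does real work, is that such a high-slope $M$ could only map into $E$ via a section of $\OO(a,b)^{\lor} \tt E$; pulling back along the extension and using that any map to $L_2 \tt I_Z$ must vanish on $Z$ shows no such section exists. I would organize this as a short case check on $(a,b)$ using the cohomology table, and conclude that no sub-line bundle violates $\mu(M) < \mu(E)$, so $E$ is stable.
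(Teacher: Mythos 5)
Your proposal is correct and takes essentially the same route as the paper: the paper also reduces to showing $\HH^0(E(-a,-b))=0$ whenever $a+b\geq 0$ by running the twisted extension through the long exact sequence (equivalently, your case split on whether $M\to L_2\otimes I_Z$ vanishes), with the hypothesis $\ell(Z)\geq 1$ used exactly where you use it, to kill the borderline cases $M=\OO$ and $M=\OO(1,-1)$ via $\HH^0(I_Z)=0$.
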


\begin{proof}
In the first case, we prove that $E$ is stable by showing that, if there is a non-zero map $\OO(a,b) \to E$, then
\begin{displaymath}
\mu(\OO(a,b))=a+b<-\frac{1}{2}=\mu(E).
\end{displaymath}
We have
\begin{displaymath}
0 \to \OO(-(a+1),-b) \to E(-a,-b) \to I_Z(-a,-b) \to 0,
\end{displaymath}
and
\begin{displaymath}
0 \to I_Z(-a,-b) \to \OO(-a,-b) \to \OO / I_Z \to 0.
\end{displaymath}
First note that if $a>0$ or $b>0$, then $\HH^0(\OO(-(a+1),-b))=0$ and $\HH^0(I_Z(-a,-b))=0$ since $\HH^0(\OO(-a,-b))=0$, in which case $\HH^0(E(-a,-b))=0$. We thus assume $a,b\leq 0$. But then
\begin{displaymath}
\mu(\OO(a,b)) \leq -1 < \mu(E),
\end{displaymath}
unless $a=b=0$. Moreover, if $a=b=0$, then $\HH^0(\OO(-1,0))=\HH^0(I_Z)=0$, so that $\HH^0(E)=0$, implying that there are no non-zero maps $\OO \to E$. Consequently, if $\HH^0(E(-a,-b)) \neq 0$, then $\mu(\OO(b,a)) < \mu(E)$.

Similarly, in the second case, one can show that if $\mu(\OO(a,b))= a+ b \geq 0$, then $\HH^0(E(-a,-b))=0$; otherwise we have 
\begin{displaymath}
\mu(\OO(a,b)) \leq -1 < \mu(E),
\end{displaymath}
so $E$ is in fact stable.
\end{proof}

We can now prove:

\begin{theorem}\label{nonemptinessmayorados}
The moduli space $\MM(c_1,c_2)$ is non-empty if one of the following holds:
\begin{enumerate}
\item $c_1=0,-F,-C_0$ and $c_2 \geq 0$ 
\item $c_1=-C_0-F$ and $c_2 \geq 1$
\end{enumerate} 
Morevoer, in all cases $\MM(c_1,c_2)$ contains a non-trivial semistable co-Higgs pair.
\end{theorem}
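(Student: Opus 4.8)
The plan is to prove non-emptiness constructively, exhibiting for each admissible pair $(c_1,c_2)$ an explicit rank~2 bundle $E$ together with a semistable Higgs field $\Phi \neq 0$. The overall strategy splits into two regimes according to whether the extension class of $E$ can be taken \emph{stable} as a bundle or merely \emph{semistable} as a co-Higgs pair. First I would handle the generic case, where $c_2$ is strictly larger than the boundary value of Theorem~\ref{extid}. Here the idea is to build $E$ as an extension
\begin{displaymath}
0 \to L_1 \to E \to L_2 \tt I_Z \to 0
\end{displaymath}
with $\ell(Z) \geq 1$, choosing $L_1,L_2$ appropriately for each reduced class (for example $L_1 = \OO(-1,0)$ in the cases $c_1 = -F$, $-C_0$, as in Lemma~\ref{stabilityfor-Fand-C_0}), so that $E$ is already a \emph{stable bundle}. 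Stability of the bundle immediately gives stability, hence semistability, of the co-Higgs pair $(E,\Phi)$ for \emph{any} $\Phi$, since a $\Phi$-invariant destabilizing subsheaf is in particular a destabilizing subsheaf of the bundle. It then remains only to exhibit a nonzero $\Phi$, which is exactly where Proposition~\ref{toshowHiggsfields} enters: I would verify $\HH^0(L_2^{\lor} \tt E(2,0)) \neq 0$ using the defining sequence and the vanishing table of Remark~\ref{vanishingcohomology}, concluding $\HH^0(\End_0 E(2,0)) \neq 0$ and therefore the existence of a non-trivial integrable Higgs field (the summand $\Phi_1 \in \HH^0(\End_0 E(2,0))$ alone is automatically integrable since the cross-commutator in Remark on integrability vanishes when $\Phi_2 = 0$).

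Next I would treat the boundary cases $c_2 = 0$ (for $c_1 = 0,-F,-C_0$) and $c_2 = 1$ (for $c_1 = -C_0-F$), where Theorem~\ref{extid} tells us $E$ must be an extension of line bundles, so no stable bundle is available. Here I would use Lemma~\ref{indeedsemistable}: take a decomposable (or suitably split) $E = G_1 \p G_2$ with $\mu(G_1) > \mu(E)$, verify $\HH^0(G_1^{\lor} \tt G_2 \tt T) \neq 0$ via the cohomology table, and conclude that any Higgs field with nonzero $(2,1)$-entry $C$ destroys the invariance of the destabilizing line bundle $G_1$, yielding a \emph{semistable} co-Higgs pair that is manifestly non-trivial. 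The concrete choices I expect to work are $G_1 = \OO(0,1)$, $G_2 = \OO(0,-1)$ (or their $C_0 \leftrightarrow F$ and twisted analogues) so that $G_1^{\lor} \tt G_2 \tt T$ has a nonzero section coming from the $\OO(2,0)$ summand of $T$.

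The step I expect to be the main obstacle is verifying the cohomology vanishings and non-vanishings \emph{uniformly across all four reduced first Chern classes and the allowed ranges of the invariants $d$, $r$, and $\ell(Z)$}. Unlike the clean necessary-condition computation in Theorem~\ref{extid}, the sufficiency direction requires producing, for every admissible $(c_1,c_2)$, a specific extension with the right $\ell(Z)$; ensuring both that $E$ is stable (in the generic case) \emph{and} that the relevant $\HH^0$ feeding Proposition~\ref{toshowHiggsfields} is nonzero may force different explicit models in different parity/range cases, and the bookkeeping of which $\OO(a,b)$ appears is delicate. A secondary subtlety is confirming integrability of the constructed $\Phi$: in the stable-bundle regime this is free because one may take $\Phi = \Phi_1$ supported in a single summand of $T$, but if a genuinely mixed field is needed, one must check the equations~(\ref{integrability}) explicitly, which I would avoid by always arranging $\Phi_2 = 0$.
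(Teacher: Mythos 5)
Your overall architecture --- explicit extensions for each reduced class, Proposition \ref{toshowHiggsfields} to produce a nonzero $\Phi$, arranging $\Phi=\Phi_1$ so that integrability is automatic, and Lemma \ref{indeedsemistable} for the decomposable boundary cases --- is the same as the paper's, and those parts are sound. The genuine gap is in your ``generic case'': you propose to choose the extension so that $E$ is a \emph{stable} bundle whenever $c_2$ exceeds the boundary value of Theorem \ref{extid}, but for $c_1=0$ this is impossible for small positive $c_2$. Concretely, for $(c_1,c_2)=(0,1)$ every rank-2 bundle $E$ sits in a canonical extension (\ref{ABM}) with sub-line bundle $\OO(r,d)$, $d\geq 0$ and $\ell(Z)=1+2dr\geq 0$; if $d=0$ then Theorem \ref{ABM1}(2) forces $-2r\leq \ell=1$, i.e.\ $r\geq 0$, and if $d\geq 1$ then $\ell\geq 0$ again forces $r\geq 0$, so in every case $\mu(\OO(r,d))=r+d\geq 0=\mu(E)$ and $E$ is not stable. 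Hence the step ``choose $L_1,L_2$ so that $E$ is already a stable bundle'' fails outright for $c_1=0$, and would need separate justification for $c_1=-C_0-F$ with small $c_2$ as well.

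The repair is small and is exactly what the paper does for $c_1=0$ and $c_1=-C_0-F$: take the canonical extension with $d=r=0$ (respectively $d=0$, $r=-1$), whose sub-line bundle has slope \emph{equal} to $\mu(E)$, so that $E$ is merely \emph{semistable} as a bundle. Your own justification (``a $\Phi$-invariant destabilizing subsheaf is in particular a destabilizing subsheaf of the bundle'') works verbatim with semistability in place of stability, and Proposition \ref{toshowHiggsfields} still supplies the nonzero $\Phi_1$. Genuinely stable bundles are only needed --- and only available, via Lemma \ref{stabilityfor-Fand-C_0} --- for $c_1=-F,-C_0$ with $c_2\geq 1$. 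A secondary wrinkle: at the boundary $c_1=-C_0-F$, $c_2=1$, the only decomposable model is $\OO(-1,0)\p\OO(0,-1)$, for which $\mu(G_1)=\mu(E)$, so Lemma \ref{indeedsemistable} (which assumes $\mu(G_1)>\mu(E)$) does not apply as stated; once again it is semistability of the bundle itself, not the lemma, that carries the argument there.
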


\begin{proof}
To see that $\MM(0,c_2) \neq \emptyset$, consider a rank-2 vector bundle $E$ with $c_1(E)=0$, $c_2(E) \geq 0$, and numerical invariants $d=r=0$ (such a bundle exists by Theorem \ref{ABM1}). We can write $E$ as
\begin{displaymath}
0 \to \OO \to E \to I_Z \to 0
\end{displaymath} 
with $\ell(Z)=c_2$. Since $\mu(E)=\mu(\OO)=0$, $E$ is semistable. Thus, it suffices to show that $\HH^0(\End_0 E(2,0)) \neq 0$. This follows from Proposition \ref{toshowHiggsfields}, as $\HH^0(E(2,0)) \neq 0$.

To see that $\MM(-F,c_2) \neq \emptyset$, we consider two cases. When $c_2=0$, take $E=\OO \p \OO(-1,0)$ and a Higgs field of the form
\begin{displaymath}
\Phi= \Phi_1 = \left(\begin{array}{cc}
A_1 & B_1 \\
C_1 & -A_1 \\
\end{array}\right),
\end{displaymath}
with $A_1 \in \HH^0(\OO(2,0))$, $B_1 \in \HH^0(\OO(3,0))$ and non-zero $C_1 \in \HH^0(\OO(1,0))$. Note that the only destabilizing sub-line bundle of $E$ is $\OO$. It follows that $(E,\Phi)$ is stable, as $C_1 \neq 0$, and so $\OO$ is not $\Phi$-invariant. Thus, $(E,\Phi) \in \MM(-F,0)$.

When $c_2 \geq 1$, consider a rank-2 vector bundle $E$ with $c_1(E)=-F$, and numerical invariants $d=0$, $r=-1$ (such a bundle exists by Theorem \ref{ABM1}). We can write $E$ as
\begin{displaymath}
0 \to \OO(-1,0) \to E \to I_Z \to 0
\end{displaymath} 
with $\ell(Z)=c_2$. We proved in Lemma \ref{stabilityfor-Fand-C_0} that such an $E$ is stable. Thus, it suffices to show that $\HH^0(\End_0 E(2,0)) \neq 0$. This follows from Proposition \ref{toshowHiggsfields}, as $\HH^0(E(2,0)) \neq 0$.

To see that $\MM(-C_0,c_2) \neq \emptyset$, we again consider two cases. When $c_2 = 0$, take $E=\OO \p \OO(0,-1)$ and a Higgs field of the form 
\begin{displaymath}
\Phi= \Phi_2 = \left(\begin{array}{cc}
A_2 & B_2 \\
C_2 & -A_2 \\
\end{array}\right),
\end{displaymath}
with $A_2 \in \HH^0(\OO(0,2))$, $B_2 \in \HH^0(\OO(0,3))$ and $C_2 \in \HH^0(\OO(0,1))$. Any non-zero $C_2$ will not leave $\OO$ $\Phi$-invariant, which is the only destabilizing sub-line bundle of $E$. Thus $(E, \Phi)$ yields a stable pair in $\MM(-C_0,0)$.

When $c_2 \geq 1$, consider a rank-2 vector bundle $E$ with $c_1(E)=-C_0$, and numerical invariants $d=0$, $r=-1$ (such a bundle exists by Theorem \ref{ABM1}). We can write $E$ as
\begin{displaymath}
0 \to \OO(-1,0) \to E \to \OO(1,-1) \tt I_Z \to 0
\end{displaymath} 
with $\ell(Z)=c_2-1$. Again, we proved in Lemma \ref{stabilityfor-Fand-C_0} that such an $E$ is stable. Thus, it suffices to show that $\HH^0(\End_0 E(2,0)) \neq 0$. This follows from Proposition \ref{toshowHiggsfields}, as $\HH^0(E(2,0)) \neq 0$.

Finally, to see that $\MM(-C_0-F,c_2) \neq \emptyset$, consider a rank-2 vector bundle $E$ with $c_1(E)=-C_0-F$, $c_2(E) \geq 1$, and numerical invariants $d=0$, $r=-1$ (such a bundle exists by Theorem \ref{ABM1}). We can write $E$ as
\begin{displaymath}
0 \to \OO(-1,0) \to E \to \OO(0,-1) \tt I_Z \to 0
\end{displaymath} 
with $\ell(Z)=c_2(E)-1$. Since $\mu(E)=\mu(\OO(-1,0))=-1$, $E$ is semistable. Thus, it suffices to show that $\HH^0(\End_0 E(2,0)) \neq 0$. This follows from Proposition \ref{toshowHiggsfields}, as $\HH^0(E(2,0)) \neq 0$.
\end{proof}

We now work with arbitrary first Chern class $c_1$, and give a complete characterization of when $\MM(c_1,c_2)$ is non-empty. 


\begin{theorem}\label{fullanswernon-emptiness}
Let $c_1=\alpha C_0 + \beta F$ and $c_2=\gamma$. Then, the moduli space $\MM(c_1,c_2)$ is non-empty (and moreover it contains a non-trivial co-Higgs pair) if and only if one of the following holds:
\begin{enumerate}
\item at least one of $\alpha$ and $\beta$ is even and $2\gamma \geq \alpha\beta$;
\item $\alpha$ and $\beta$ are both odd and $2\gamma \geq \alpha \beta -2$.
\end{enumerate}
\end{theorem}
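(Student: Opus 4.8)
The plan is to reduce the general case to the reduced-class results already established, using the tensoring Lemma \ref{decomposable} and its numerical consequence Corollary \ref{modifiedc_2}, and then to check that the stated arithmetic conditions on $(\alpha,\beta,\gamma)$ translate exactly into the reduced-class conditions of Theorem \ref{extid} and Theorem \ref{nonemptinessmayorados}. Since tensoring by a line bundle preserves (semi)stability and non-triviality of the Higgs field, the moduli space $\MM(c_1,c_2)$ is non-empty with a non-trivial pair if and only if the corresponding reduced moduli space is.

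First I would split into the four parity cases for $(\alpha,\beta)$. By Lemma \ref{decomposable}, there is a line bundle $L$ so that $c_1(E \tt L)$ is one of the reduced classes $0,-F,-C_0,-C_0-F$, with the reduced class determined by the parities of $\alpha$ and $\beta$: when at least one of $\alpha,\beta$ is even we land in $\{0,-F,-C_0\}$, and when both are odd we land in $-C_0-F$. Corollary \ref{modifiedc_2} then records the shift in the second Chern class: in the first three cases $c_2(E \tt L)=\gamma - \tfrac{\alpha\beta}{2}$, and in the $-C_0-F$ case $c_2(E \tt L)=\gamma + \tfrac{1-\alpha\beta}{2}$. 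The combined necessity-and-sufficiency statements of Theorems \ref{extid} and \ref{nonemptinessmayorados} say that the reduced moduli space is non-empty (with a non-trivial pair) precisely when $c_2(E \tt L)\geq 0$ in the first three cases and $c_2(E \tt L)\geq 1$ in the last.

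The remaining work is the translation of these inequalities. In the even case, $c_2(E \tt L)\geq 0$ becomes $\gamma - \tfrac{\alpha\beta}{2}\geq 0$, i.e. $2\gamma \geq \alpha\beta$, which is exactly condition (1). In the both-odd case, $c_2(E \tt L)\geq 1$ becomes $\gamma + \tfrac{1-\alpha\beta}{2}\geq 1$, i.e. $2\gamma \geq \alpha\beta - 1$; since $\alpha\beta$ is odd, $2\gamma$ is even and $\alpha\beta-1$ is even, so this inequality is equivalent to $2\gamma \geq \alpha\beta -1$, which for integers coincides with $2\gamma \geq \alpha\beta-2$ (the value $2\gamma=\alpha\beta-1$ being impossible since its parities disagree). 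This recovers condition (2).

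The one point that requires a little care — and which I expect to be the main obstacle — is confirming that Theorems \ref{extid} and \ref{nonemptinessmayorados} together give an \emph{if and only if} at the level of the reduced classes, so that the equivalence lifts cleanly through the tensoring. Theorem \ref{extid} supplies the necessity (any semistable pair forces $c_2\geq 0$, respectively $c_2\geq 1$), while Theorem \ref{nonemptinessmayorados} supplies the sufficiency together with the existence of a non-trivial Higgs field; invoking both is what upgrades the implication to an equivalence. I would simply remark that tensoring by $L^{\lor}$ carries a non-trivial semistable pair with reduced $c_1$ back to one with the original $c_1$, so no information is lost in either direction, and the proof is complete.
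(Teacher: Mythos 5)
Your reduction is exactly the paper's: tensor down to a reduced class via Lemma \ref{decomposable}, track the second Chern class via Corollary \ref{modifiedc_2}, and invoke Theorem \ref{extid} for necessity and Theorem \ref{nonemptinessmayorados} for sufficiency, observing that tensoring by a line bundle preserves semistability and the (non-)vanishing of the Higgs field in both directions. The even case of your translation is correct.

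However, the arithmetic in the odd--odd case is wrong, and the error is not harmless. From $c_2(E\otimes L)=\gamma+\tfrac{1-\alpha\beta}{2}$ and the threshold $c_2(E\otimes L)\geq 1$ for the reduced class $-C_0-F$, clearing denominators gives $2\gamma+1-\alpha\beta\geq 2$, i.e.\ $2\gamma\geq\alpha\beta+1$ --- not $2\gamma\geq\alpha\beta-1$ as you wrote. By the parity observation you correctly make ($2\gamma$ even, $\alpha\beta$ odd), $2\gamma\geq\alpha\beta+1$ is equivalent to $2\gamma\geq\alpha\beta$, the same inequality as in case (1), whereas your $2\gamma\geq\alpha\beta-1$ is equivalent to the printed $2\gamma\geq\alpha\beta-2$ and is strictly weaker. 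The two conditions genuinely differ: for $\alpha=\beta=-1$ and $\gamma=0$ one has $2\gamma=0\geq\alpha\beta-2=-1$, so your derivation (and the theorem as printed) would declare $\MM(-C_0-F,0)$ non-empty, but Theorem \ref{extid}(2) proves it is empty. So the correct output of this reduction is $2\gamma\geq\alpha\beta$ in case (2) as well; your sign slip happens to land on the inequality printed in the statement, which appears to be a misprint (the paper's own proof is a one-paragraph reduction that never displays this computation). The step you flagged as ``requiring a little care'' --- that Theorems \ref{extid} and \ref{nonemptinessmayorados} combine into an if-and-only-if at the reduced level --- is fine; the failure is in the elementary rearrangement just before it.
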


\begin{proof}
For the forward direction, take $E \in \MM(c_1,c_2)$ and tensor it by the appropriate line bundle $L$, so that its first Chern class is one of the reduced ones (see the proof of Lemma \ref{decomposable}). Then Corollary \ref{modifiedc_2} tells us how the second Chern class changes after tensoring $E$ by $L$. We can then apply Theorem \ref{extid} to obtain the desired result. The converse follows from the results in this section on the existence of semistable co-Higgs bundles with one of the reduced classes, see Theorem \ref{nonemptinessmayorados}.
\end{proof}

\begin{remark}
The main results in this section can be generalized to Hirzebruch surfaces, and when the second Chern class is smaller than two, to arbitrary polarizations. Even though the arguments to prove these results are very similar, there are some subtleties that have to be taken into account; for instance, the results for the reduced classes $-C_0$ and $-F$ have a different flavour. We do not pursue this here, as the main focus of this paper is to explore semistable co-Higgs bundles only over $\P \times \P$. However, these generalizations can be found in my PhD thesis \cite{AVC} and will also appear on a future paper.
\end{remark}

We finish this section by observing that not every stable bundle $E$ admits a non-zero Higgs field. Indeed, we have:

\begin{proposition}\label{nonon-trivial}
Suppose that $d>1$, $r \leq -1-d$ and $c_2 \geq 3 - d(1+2r)$, or that $d=1$, $r \leq -2$ and $c_2 \geq -4r-1$, then $M(-F,c_2,d,r) \neq \emptyset$. Moreover, each $E \in M(-F,c_2,d,r)$ is stable and has no non-trivial Higgs field.
\end{proposition}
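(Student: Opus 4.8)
The plan is to treat the three assertions --- non-emptiness, stability, and the absence of Higgs fields --- separately, reserving the bulk of the effort for the last, which is where the numerical bounds on $c_2$ really enter. Throughout I present $E$ by the canonical extension (\ref{ABM}), which for $c_1=-F$ (so $\alpha=0$, $\beta=-1$) reads
\begin{displaymath}
0 \to \OO(r,d) \to E \to \OO(-1-r,-d)\tt I_Z \to 0, \qquad \ell(Z)=c_2+d(1+2r),
\end{displaymath}
and I record that $\mu(E)=-\tfrac12$, $\det E=\OO(-1,0)$, and $\OO(a,b)$ has slope $a+b$. For non-emptiness I would simply feed the hypotheses into Theorem \ref{ABM1}: in both cases $2d>0=\alpha$, so condition (1) there holds, and one checks $\ell:=\ell(-F,c_2,d,r)=c_2+d(1+2r)\ge 3$ in the first case and $\ell\ge -2r\ge 4$ in the second. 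In particular $\ell\ge 1$, so $Z\neq\emptyset$, a fact I will use repeatedly.

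For stability I would rule out any sub-line bundle $\OO(a,b)\hookrightarrow E$ with $a+b\ge 0$. Restricting to a general fibre forces $b\le d$. If the composite $\OO(a,b)\to E\to \OO(-1-r,-d)\tt I_Z$ vanishes, then $\OO(a,b)$ factors through $\OO(r,d)$, so $a\le r$, $b\le d$ and $a+b\le r+d\le -1$, a contradiction; here I also use that among sub-line bundles of fibre degree $d$ the maximal one is exactly $\OO(r,d)$, which is the meaning of $r_E=r$. Otherwise the composite is nonzero, giving a nonzero section of $\OO(-1-r-a,-d-b)\tt I_Z$, whence $a\le -1-r$ and $b\le -d$; since $\OO(a,b)$ is then generically independent from $\OO(r,d)$, comparing first Chern classes pins the \emph{maximal} destabilizing candidate to $\OO(-1-r,-d)$. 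Finally this does not embed, because $\Hom(\OO(-1-r,-d),E)=\HH^0(E(1+r,d))$ fits, via (\ref{ABM}), into $0\to\HH^0(\OO(2r+1,2d))\to \HH^0(E(1+r,d))\to \HH^0(I_Z)$, and both outer terms vanish: the first since $2r+1<0$ (as $r\le -1-d$), the second since $Z\neq\emptyset$. Hence $E$ is stable.

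For the absence of a non-trivial Higgs field $\Phi=\Phi_1+\Phi_2$, with $\Phi_1\in\HH^0(\End_0 E(2,0))$ and $\Phi_2\in\HH^0(\End_0 E(0,2))$, I would first show $\OO(r,d)$ is $\Phi_i$-invariant, in the spirit of Lemma \ref{isb}: the obstruction to invariance is the composite $\OO(r,d)\to E(2,0)\to \OO(-1-r,-d)\tt I_Z(2,0)$, an element of $\HH^0(\OO(1-2r,-2d)\tt I_Z)$ (respectively $\HH^0(\OO(-1-2r,2-2d)\tt I_Z)$ for $\Phi_2$), which vanishes by Remark \ref{vanishingcohomology} because the second index is negative when $d>1$. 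Granting invariance, $\Phi_i$ is upper-triangular for the flag; its strictly-upper entry lies in $\HH^0(\OO(2r+3,2d))$ (resp.\ $\HH^0(\OO(2r+1,2d+2))$), which vanishes since $2r+3<0$ and $2r+1<0$. Thus each $\Phi_i$ is diagonal, $\mathrm{diag}(a_i,-a_i)$ with $a_1\in\HH^0(\OO(2,0))$ and $a_2\in\HH^0(\OO(0,2))$. For such a diagonal twisted endomorphism to extend to a genuine section over the non-split bundle $E$ (non-split because $\ell(Z)>0$), the lifting obstruction $a_i\cdot e$ must vanish, where $e$ is the extension class of (\ref{ABM}); pairing with the local component of $e$ at each point of $Z$ --- which is non-zero there, since $E$ is locally free (Serre's construction) --- forces $a_i$ to vanish on $Z$, i.e.\ $a_1\in\HH^0(\OO(2,0)\tt I_Z)$ and $a_2\in\HH^0(\OO(0,2)\tt I_Z)$. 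The lower bounds on $c_2$ guarantee that $\ell(Z)$ is large enough (and $Z$ positioned generally enough) that these two groups vanish, forcing $a_1=a_2=0$ and hence $\Phi=0$.

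The hard part will be precisely that last step: controlling $\HH^0(\OO(2,0)\tt I_Z)$ and $\HH^0(\OO(0,2)\tt I_Z)$, i.e.\ ensuring $Z$ is not contained in a degenerate vertical or horizontal conic $|2F|$ or $|2C_0|$. This is exactly what the bounds $c_2\ge 3-d(1+2r)$ (giving $\ell(Z)\ge 3$) are calibrated to secure, and it is the one place where merely large $\ell(Z)$ must be combined with the configuration of $Z$ rather than pure cohomology vanishing. A second delicate point is the case $d=1$: there the index $2-2d=0$ no longer makes the $\Phi_2$-invariance obstruction automatically vanish, so $\OO(r,d)$ need not be $\Phi_2$-invariant and one must instead estimate $\HH^0(\OO(-1-2r,0)\tt I_Z)$ directly. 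This is what accounts for the separate hypotheses $r\le -2$ and the sharper bound $c_2\ge -4r-1$ in the second case, and I expect the $d=1$ analysis to require the most bookkeeping.
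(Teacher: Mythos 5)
A preliminary caveat: the paper does not actually contain a proof of this proposition --- it explicitly defers the ``lengthy and technical'' argument to the author's thesis --- so there is no in-text proof to compare yours against; I can only assess your outline on its own terms. The parts you do carry out are correct: non-emptiness follows from Theorem \ref{ABM1} exactly as you say, with $\ell(Z)=c_2+d(1+2r)\geq 3$ (resp.\ $\geq -2r\geq 4$); the obstruction to $\Phi$-invariance of $\OO(r,d)$ does live in $\HH^0(\OO(1-2r,-2d)\tt I_Z)\p\HH^0(\OO(-1-2r,2-2d)\tt I_Z)$ and vanishes for $d>1$; the strictly-upper entry lives in $\HH^0(\OO(2r+3,2d))\p\HH^0(\OO(2r+1,2d+2))=0$; and the reduction of the lifting obstruction to $a\cdot e=0$, with the local components at $Z$ forcing $a$ to vanish on $Z$, is sound.

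There are, however, two genuine gaps. In the stability step you dispose only of the ``maximal'' candidate $\OO(-1-r,-d)$, but when $r<-1-d$ (resp.\ $r<-2$ for $d=1$) the constraints $a\leq -1-r$, $b\leq -d$, $a+b\geq 0$ admit other saturated sub-line bundles, e.g.\ $\OO(d,-d)$, and the vanishing of $\Hom(\OO(-1-r,-d),E)$ implies nothing about $\HH^0(E(-a,-b))$ for these smaller $(a,b)$: an injection $\OO(a,b)\hookrightarrow E$ does not produce an injection of the larger line bundle $\OO(-1-r,-d)$. One must control $\HH^0(\OO(-1-r-a,-d-b)\tt I_Z)$, which can be nonzero (e.g.\ $h^0(\OO(-1-r-d,0))=-r-d$ grows with $-r$ while $\ell(Z)$ is only bounded below by $3$), and then show the connecting map into $\HH^1(\OO(r-a,d-b))$ is injective --- a statement about the extension class, not pure cohomology. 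Second, and more seriously, your concluding step requires $\HH^0(\OO(2,0)\tt I_Z)=\HH^0(\OO(0,2)\tt I_Z)=0$, which is simply false for special $Z$ (put all of $Z$ on two fibres of $\pr_1$); since the proposition is asserted for \emph{every} $E\in M(-F,c_2,d,r)$, you cannot invoke general position of $Z$. You would have to use the full global obstruction $a\cdot e=0$ in $\Ext(\OO(-1-r,-d)\tt I_Z,\OO(r,d)\tt T)$, not merely its local components, and it is not evident that this kills $a$ for every admissible extension class. The $d=1$ case, which you rightly flag as requiring a separate estimate of $\HH^0(\OO(-1-2r,0)\tt I_Z)$, is likewise left entirely open. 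In short, the architecture is reasonable, but the two quantitative steps carrying the real content of the proposition are missing.
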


The proof of the above Proposition is lengthy and technical, so instead of including it here, we refer the reader to \cite[Chapter~3]{AVC}.

\section{The Moduli Spaces $\MM(c_1,c_2)$}\label{section5}

In this section we construct explicit examples of moduli spaces for reduced first Chern classes ($0$, $-F$, $-C_0$, $-C_0-F$), and low values of the second Chern class. More specifically, we give a full description of the moduli spaces of rank 2 semistable co-Higgs bundles for $c_2=0$ (and any of the reduced classes for $c_1$). In the case of $c_2=1$, we also give an example (when $c_1=-F$) of how the moduli space $\MM(-F,1)$ looks like. In this case, a technical obstacle is obtaining the Higgs fields for non-trivial extensions of a line bundle by another line bundle (which are not decomposable). Though the idea of how to achieve this is straightforward, the execution is computationally heavy. 

\subsection{Second Chern Class $c_2=0$}

Throughout this subsection, we let $E$ denote a rank 2 vector bundle over $\P \times \P$ and assume $c_2=0$. We also assume that $c_1$ is either $0$ or $-F$, since the case where $c_1=-C_0$ is symmetric to the case $c_1=-F$ (in the sense that one simply interchanges the roles of the first and second copies of $\P$). Recall that, in this case, by Theorem \ref{extid}, any semistable co-Higgs  pair $(E,\Phi)$ is such that $E$ is an extension of line bundles. Moreover, we can prove that, in this case, $E$ is decomposable (Propositions \ref{budleandhiggsfields-F0} and \ref{underlyingthreedecomposable}).

\subsubsection{First Chern class $c_1=-F$}\label{sectionmodulispace-F0}

We now analyze further the case $c_1=-F$. Recall that in this case the notions of semistability and stability coincide. We begin by describing the possible co-Higgs pairs appearing in the moduli space. 

\begin{proposition}\label{budleandhiggsfields-F0}
Suppose that $c_1(E)=-F$ and $c_2(E)=0$. If $(E,\Phi)$ is a stable co-Higgs pair, then $E=\OO \p \OO(-1,0)$. Moreover, $\Phi$ is of the form
\begin{displaymath}
\Phi=\Phi_1+\Phi_2=\left(\begin{array}{cc}
A_1 & B_1 \\
C_1 & -A_1 \\
\end{array}\right)
+
\left(\begin{array}{cc}
A_2 & B_2 \\
0 & -A_2 \\
\end{array}\right),
\end{displaymath}
with 
$A_1 \in \HH^0(\OO(2,0))$, $B_1 \in \HH^0(\OO(3,0))$, $C_1 \in \HH^0(\OO(1,0))$  and $A_2 \in \HH^0(\OO(0,2))$, $B_2 \in \HH^0(\OO(1,2))$.
\end{proposition}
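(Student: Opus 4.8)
## Proof Proposal

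The plan is to establish the two assertions of Proposition \ref{budleandhiggsfields-F0} in sequence: first pin down the underlying bundle as $E = \OO \oplus \OO(-1,0)$, then determine the precise shape of an admissible Higgs field.

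For the first claim, I would start from Theorem \ref{extid}(1): since $c_1(E)=-F$ and $c_2(E)=0$ with $(E,\Phi)$ semistable, equality holds in the bound, so $E$ is an extension of line bundles rather than merely involving an ideal sheaf $I_Z$ with $\ell(Z)>0$. Concretely, the analysis of case (ii) in the proof of Theorem \ref{extid} (with $(\alpha,\beta)=(0,-1)$) forces $\ell(Z)=0$ and constrains the numerical invariants: one has $d\ge 0$ and, because $c_2(E)=d(-1-2r)+\ell(Z)=0$ with $\ell(Z)=0$, either $d=0$ or $d(-1-2r)=0$ with $r$ chosen accordingly. I expect the semistability condition $\mu(\OO(r,d))\le\mu(E)=-\tfrac12$ to rule out all numerical types except the one giving $E=\OO\oplus\OO(-1,0)$; in particular the invariant $d$ must vanish, so $E$ restricts to $\OOO\oplus\OOO(-1)$ on a general fibre $F$, and the extension $0\to\OO(r,0)\to E\to\OO(-1-r,0)\otimes I_Z\to 0$ with $\ell(Z)=0$ must split (the relevant $\Ext^1$ vanishing following from Remark \ref{vanishingcohomology}). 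Checking that the only value of $r$ compatible with semistability yields $\OO\oplus\OO(-1,0)$, and verifying decomposability via the vanishing of the extension class, is the technical heart of this half.

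For the second claim, once $E=G_1\oplus G_2$ with $G_1=\OO$, $G_2=\OO(-1,0)$, I would compute $\End_0 E\otimes T$ entrywise. Writing $\Phi=\Phi_1+\Phi_2$ with $\Phi_i$ valued in $\OO(2,0)$ and $\OO(0,2)$ respectively, the four entries of each $\Phi_i$ live in the cohomology groups $\HH^0(\OO(2,0))$, $\HH^0(G_1 G_2^{\lor}\otimes\OO(2,0))$, $\HH^0(G_1^{\lor}G_2\otimes\OO(2,0))$ (and the analogous groups twisted by $\OO(0,2)$). Using Remark \ref{vanishingcohomology}, I would evaluate each: for instance $G_1^{\lor}G_2\otimes\OO(0,2)=\OO(-1,2)$ has no sections, which is exactly what forces the $(2,1)$-entry $C_2$ of $\Phi_2$ to vanish; the remaining entries land in $\HH^0(\OO(2,0))$, $\HH^0(\OO(3,0))$, $\HH^0(\OO(1,0))$, $\HH^0(\OO(0,2))$, $\HH^0(\OO(1,2))$ as claimed. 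This is a direct cohomological bookkeeping exercise and I do not anticipate difficulty beyond care with the twists.

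The main obstacle is the first claim — proving $E$ is exactly $\OO\oplus\OO(-1,0)$ and not some other semistable extension or a non-split bundle with the same Chern data. The subtlety is that equality in Theorem \ref{extid} only guarantees $E$ is an \emph{extension} of line bundles, and I must upgrade this to a \emph{splitting} and then identify the summands uniquely. I would handle this by combining the numerical constraints from the proof of Theorem \ref{extid} (which restrict $d$ and $r$) with the stability inequality and the explicit $\Ext^1$-vanishing from Remark \ref{vanishingcohomology} to kill the extension class; one should also confirm that any non-split candidate would admit a destabilizing $\Phi$-invariant sub-line bundle, contradicting stability. Since here stability and semistability coincide for $c_1=-F$, there is no ambiguity from strictly semistable Jordan–Hölder phenomena, which simplifies the endgame.
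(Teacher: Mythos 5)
Your second half (the shape of $\Phi$) is exactly the paper's computation: once $E=\OO\oplus\OO(-1,0)$, the entries of $\Phi_1$ and $\Phi_2$ land in the cohomology groups you list, and $C_2\in\HH^0(\OO(-1,2))=0$; nothing to add there.

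For the first half your route differs from the paper's, and one step needs repair. The paper does not try to kill the extension class. After using $c_2=0$ to force the extension into the form $0\to\OO(a,0)\to E\to\OO(-1-a,0)\to0$, it observes that the class lives in $\HH^1(\OO(2a+1,0))=\HH^1(\P;\OOO(2a+1))$, so $E$ is the pullback under $\pr_1$ of a rank-2 bundle on $\P$ and hence splits by Grothendieck's theorem \emph{whether or not} the class vanishes. This matters: by Remark \ref{vanishingcohomology}, $\HH^1(\OO(2a+1,0))\neq0$ for $a\le-2$, so the blanket $\Ext^1$-vanishing you propose is false on part of the range where you would need it. Your plan can still be made to work by first pinning the invariants down to $(d,r)=(0,0)$ and only then invoking $\Ext(\OO(-1,0),\OO)=\HH^1(\OO(1,0))=0$. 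But pinning down $r$ is where the gap sits: you appeal to ``the semistability condition $\mu(\OO(r,d))\le\mu(E)$'' to exclude the other numerical types. Co-Higgs (semi)stability only constrains $\Phi$-\emph{invariant} subsheaves, and $E$ is not assumed stable as a bundle, so you may not apply the slope inequality to $\OO(r,d)$ until you have shown it is preserved by every $\Phi$. That is exactly what the paper checks for $a\ge1$: $\HH^0\bigl(\Hom(\OO(a,0),\OO(-1-a,0))\otimes T\bigr)=\HH^0(\OO(1-2a,0))\oplus\HH^0(\OO(-1-2a,2))=0$, so $\OO(a,0)$ is $\Phi$-invariant and destabilizing; a symmetric argument with the $B$-entries excludes $a\le-2$. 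Your closing sentence shows you know $\Phi$-invariance must be confirmed, but you attach it only to ``non-split candidates,'' whereas it is precisely the split bundles $\OO(a,0)\oplus\OO(-1-a,0)$ with $a\ge1$ or $a\le-2$ that must be excluded this way. With that step made explicit, your argument closes.
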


\begin{proof}
By Theorem \ref{extid}, $E$ is an extension of line bundles. Let us first show that $E$ is in fact decomposable. We know that $E$ fits into an exact sequence of the form
\begin{displaymath} 
0 \rightarrow \mathcal{O}(a,b) \rightarrow E \rightarrow \mathcal{O}(-1-a,-b) \rightarrow 0, 
\end{displaymath}
and $0=c_2(E)=-b(1+2a)$. Thus $b=0$. A non-trivial extension corresponds to an element of  $\HH^1(\OO(2a+1,0)) = \HH^1(\P; \OOO(2a+1))$, and so it is the pullback to $\mathbb{P}^1 \times \mathbb{P}^1$ of a non-trivial extension $V$ of $\OOO(-a-1)$ by $\OOO(a)$ over $\P$. Since bundles over $\P$ are decomposable, $V = \OOO(c) \oplus \OOO(c')$, for some integers $c$ and $c'$, and $E = \pr_1^*V = \OO(c,0) \oplus \OO(c',0)$. 

Then, the underlying bundle of a stable co-Higgs pair $(E,\Phi)$ with $c_1(E)=-F$ and $c_2(E)=0$ is of the form $E = \OO(a,0) \p \OO(-a-1,0)$. Let us show that $a$ can only take the values $-1$ or $0$. Any element  $\Phi \in \HH^0(\End_0E \tt T)$ has the form
\begin{displaymath}
\Phi=\left(\begin{array}{cc}
A_1 & B_1 \\
C_1 & -A_1 \\
\end{array}\right)
+ 
\left(\begin{array}{cc}
A_2 & B_2 \\
C_2 & -A_2 \\
\end{array}\right), 
\end{displaymath}
where $A_1 \in $ H$^0(\OO(2,0))$, $B_1 \in $ H$^0(\OO(2a+3,0))$, $C_1 \in \HH^0(\OO(1-2a,0))$, and $A_2 \in $ H$^0(\OO(0,2))$, $B_2 \in $ H$^0(\OO(2a+1,2))$, $C_2 \in $ H$^0(\OO(-1-2a,2))$.
If $a\geq 1$, then $C_1= C_2=0$, and $\OO(a,0)$ is $\Phi$-invariant. However, $\mu(\OO(a,0)) > \mu(E)$, which contradicts stability. A similar argument, but interchanging the roles of the $C_i$'s for the $B_i$'s, and of $\OO(a,0)$ for $\OO(-1-a,0)$, shows that $a > -2$. Hence, $a=0,-1$ and thus $E=\OO \p \OO(-1,0)$.

Let us now determine which $\Phi$'s yield stable pairs $(E, \Phi)$. Any element in $\HH^0(\End_0E\tt T)$ is of the form:
\begin{displaymath}
\Phi=\Phi_1+\Phi_2=\left(\begin{array}{cc}
A_1 & B_1 \\
C_1 & -A_1 \\
\end{array}\right)
+
\left(\begin{array}{cc}
A_2 & B_2 \\
0 & -A_2 \\
\end{array}\right),
\end{displaymath}
with 
$A_1 \in $ H$^0(\OO(2,0))$, $B_1 \in $ H$^0(\OO(3,0))$, $C_1 \in $ H$^0(\OO(1,0))$  and $A_2 \in $ H$^0(\OO(0,2))$, $B_2 \in $ H$^0(\OO(1,2))$. Note that, if $\Phi$ were a Higgs field of $E$, then $C_1$ must be non-zero, as otherwise it would leave $\OO$ invariant, contradicting stability. Also, taking into account the integrability condition, equations (\ref{integrability}) imply that $A_2=B_2=0$. Therefore, any possible Higgs field of $E$ is of the form $\Phi=\Phi_1 \in \HH^0(\End_0 E \tt \OO(2,0))$, with $\Phi_1$ as above, and non-zero $C_1$. Now, the fact that $(E,\Phi)$ is indeed stable for any of these Higgs fields follows from Lemma \ref{indeedsemistable}.
\end{proof}

Given Proposition \ref{budleandhiggsfields-F0}, we now discuss the isomorphism classes of pairs $(\OO \p \OO(-1,0), \Phi)$ with $\Phi$ as above. Recall that $(E, \Phi)$ is isomorphic to $(E,\Phi')$ when there exists an automorphism $\Psi$ of $E$ such that $\Phi'=\Psi\circ\Phi\circ\Psi^{-1}$. Now, an automorphism $\Psi$ of $E$ can be chosen of the form $$\Psi=\left(\begin{array}{cc} 1 & P \\ 0 & Q \end{array}\right) \in \HH^0(\End E),$$ where $P$ and $Q$ are global sections of $\OO(1,0)$ and $\OO$, respectively; moreover, $Q \neq 0$. Hence, 
\begin{displaymath}
\Psi \Phi \Psi^{-1}=\left(
\begin{array}{cc}
A_1+PC_1 & -Q^{-1}(2A_1P-B_1-C_1P) \\
QC_1 & -(A_1+PC_1)
\end{array}\right).
\end{displaymath}
Since $C_1 \in \HH^0(\OO(1,0))$, we can locally write $C_1=\alpha(z_1-p)$. Then, $A_1=A_1(p)+(z_1-p)[A_1'(p)+A_1''(p)(z_1-p)]$. It is not hard to see that, by choosing $P=-\alpha^{-1}[A_1'(p)+A_1''(p)(z_1-p)]$ and $Q=\alpha^{-1}$, we have a representative of the conjugacy class of $\Phi$ of the form
\begin{displaymath}
\left(
\begin{array}{cc}
A_1(p) & B'_1 \\
z_1-p & -A_1(p)
\end{array}\right),
\end{displaymath}
where $B'_1 \in \HH^0(\OO(3,0))$.

It follows from the above discussion that every Higgs field of a stable co-Higgs pair is the pullback of a Higgs field of the bundle $\OOO \p \OOO(-1)$ over $\P$. Furthermore, every stable co-Higgs  pair of degree $-1$ over $\P$ gives rise, by taking pullbacks, to a  stable co-Higgs  pair over $\P \times \P$ of this form. Let us state these facts as:

\begin{theorem}\label{explicitdescriptionmodulispace0}
The moduli space $\MPP(-F,0)$ of rank 2 stable co-Higgs bundles over $\P \times \P$ with first Chern class $-F$ and second Chern class $0$ is a 6-dimensional smooth variety isomorphic to the moduli space $\MPdeg(-1)$ of rank 2 stable co-Higgs bundles of degree $-1$ over $\P$.
\end{theorem}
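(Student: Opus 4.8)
The plan is to realize the isomorphism as pullback along the first projection $\pr_1 \colon \P \times \P \to \P$. Recall that over $\P$ the second exterior power of the tangent bundle vanishes, so the integrability condition is vacuous and $\MPdeg(-1)$ parametrizes pairs $(V,\varphi)$ with $V$ of rank $2$ and degree $-1$ and $\varphi \in \HH^0(\End_0 V \tt T_{\P})$, up to $S$-equivalence; a splitting argument identical to the one in Proposition \ref{budleandhiggsfields-F0} forces $V \cong \OOO \p \OOO(-1)$ for every stable such pair. Since $T = \OO(2,0) \p \OO(0,2)$ and $\pr_1^* T_{\P} = \OO(2,0)$, I would define
\begin{displaymath}
\pr_1^* \colon \MPdeg(-1) \longrightarrow \MPP(-F,0), \qquad (V,\varphi) \longmapsto (\pr_1^* V, \pr_1^*\varphi),
\end{displaymath}
where $\pr_1^*\varphi$ is viewed as a section of $\End_0(\pr_1^*V) \tt \OO(2,0) \hookrightarrow \End_0(\pr_1^*V) \tt T$.

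First I would check that this is well defined and bijective on points. The Chern classes are $c_1(\pr_1^*V) = -F$ and $c_2(\pr_1^*V) = 0$; the field $\pr_1^*\varphi$ has vanishing $\OO(0,2)$-component, so $\pr_1^*\varphi \wedge \pr_1^*\varphi = 2[\Phi_1,\Phi_2] = 0$ and integrability is automatic. For stability, Proposition \ref{budleandhiggsfields-F0} tells us the only candidate destabilizing sub-line bundle is $\OO$, and it fails to be $\pr_1^*\varphi$-invariant precisely because the $(2,1)$-entry of $\pr_1^*\varphi$ is the pullback of the non-zero section $C_1$ witnessing stability of $(V,\varphi)$, and pullbacks of non-zero sections are non-zero. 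Conversely, the results preceding the theorem furnish the inverse: Proposition \ref{budleandhiggsfields-F0} shows every stable pair in $\MPP(-F,0)$ has underlying bundle $\OO \p \OO(-1,0) = \pr_1^*(\OOO \p \OOO(-1))$ and Higgs field $\Phi = \Phi_1$ with vanishing $\OO(0,2)$-component, and the conjugation argument immediately following that proposition shows that, after an automorphism of $E$, $\Phi_1$ is itself the pullback of a Higgs field over $\P$. This yields a two-sided inverse at the level of isomorphism classes.

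To upgrade this bijection to an isomorphism of varieties I would argue functorially: pullback along $\pr_1$ carries a flat family of stable degree $-1$ co-Higgs pairs over a base $S$ to a flat family over $\P \times \P$ with the prescribed invariants, hence induces a morphism of coarse moduli spaces, while the descent direction is handled by the same normalization performed in families, which is algebraic in the parameters once $C_1$ is put in standard form (the choices of $P$ and $Q$ depend polynomially on the entries of $\Phi_1$). Smoothness and the dimension count then come for free: since $\MPdeg(-1)$ is known from \cite[Section~7]{SR2} to be a smooth $6$-dimensional variety, the isomorphism transports both properties to $\MPP(-F,0)$.

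The main obstacle is this last step, namely promoting the pointwise bijection to a scheme-theoretic isomorphism. The delicate point is the descent direction: one must verify that the statement \emph{``$\Phi$ is a pullback''} holds uniformly over an arbitrary base, i.e. that the fibrewise normalization of the Higgs field can be carried out algebraically in families without monodromy in the choice of representative, so that it genuinely defines a morphism $\MPP(-F,0) \to \MPdeg(-1)$ inverse to $\pr_1^*$ rather than merely a bijection on closed points.
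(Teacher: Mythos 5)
Your proposal is correct and follows essentially the same route as the paper: the paper also defines the isomorphism as pullback along $\pr_1$, relies on Proposition \ref{budleandhiggsfields-F0} and the subsequent conjugation argument to show every stable pair in $\MPP(-F,0)$ is such a pullback, and transports smoothness and the dimension count from Rayan's description of $\MPdeg(-1)$. If anything, you are more careful than the paper, which simply declares the scheme-theoretic bijectivity ``immediate'' where you flag the family-descent issue.
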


\begin{proof}
In \cite{SR1}, Rayan proved that $\MPdeg(-1)$ is a 6-dimensional smooth variety given by 
\begin{displaymath}
\mathcal{V}:=\{(y,\rho) \in \Tot(\OOO(2)) \times \HH^0(\P;\OOO(4)) : \eta^2=\rho(\pi(y))\},
\end{displaymath}
where $\pi: \Tot(\OOO(2)) \to \P$ is the natural projection, and $\eta$ is the tautological section of the pullback of $\OOO(2)$ to its own total space.

Now, consider the map 
\begin{displaymath}\begin{array}{cccc}
f: & \MPdeg(-1) & \to & \MPP(-F,0) \\
   & (V,\varphi) & \mapsto & ((\pr_1)^*V,(\pr_1)^*\varphi).
\end{array}
\end{displaymath}
We only check that this map is well-defined; the fact that it is an isomorphism is immediate. Since the only underlying bundle of a stable co-Higgs  pair living in $\MPdeg(-1)$ is $\OOO(-1) \p \OOO$, and since $(\pr_1)^*(\OOO(-1) \p \OOO)=\OO(-1,0) \p \OO$, this map is simply given by $f(\varphi)=(\pr_1)^*(\varphi)$. Suppose $\varphi'=\psi\circ\varphi\circ\psi^{-1}$, where $\psi \in \Aut(\OOO(-1) \p \OOO)$. Then
 $$f(\varphi')=f(\psi\circ\varphi\circ\psi^{-1})=(\pr_1)^*(\psi\circ\varphi\circ\psi^{-1})=(\pr_1)^*(\psi)\circ(\pr_1)^*(\varphi)\circ(\pr_1)^*(\psi)^{-1},$$
where $(\pr_1)^*(\psi) \in \Aut((\pr_1)^*(\OOO(-1) \p \OOO))$. Thus $f(\varphi')$ and $f(\varphi)$ are indeed in the same conjugacy class.
\end{proof}

\subsubsection{First Chern class $c_1=0$}\label{sectionmodulispace00}

We now focus on the case $c_1=0$. In this case, we no longer have a single underlying bundle; nonetheless, there are only three possibilities.

\begin{proposition}\label{underlyingthreedecomposable}
Suppose that $c_1(E)=0$ and $c_2(E)=0$. If $(E,\Phi)$ is a semistable co-Higgs pair, then $E=\OO \p \OO$ or $\OO(0,1) \p \OO(0,-1)$ or $\OO(1,0) \p \OO(-1,0)$.
\end{proposition}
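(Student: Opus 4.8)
The plan is to use the structure theory of Section~3 to reduce $E$ to a canonical extension, then show that this extension splits, and finally bound the twisting by a cohomology-vanishing argument that forces any destabilizing sub-line bundle to be $\Phi$-invariant.

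First I would invoke Theorem~\ref{extid} in the equality case: since $c_1(E)=0$ and $c_2(E)=0$, the bundle $E$ is an extension of line bundles, i.e.\ the set $Z$ in the canonical sequence~(\ref{ABM}) is empty. Writing~(\ref{ABM}) for the reduced class $(\alpha,\beta)=(0,0)$ gives
\[
0 \to \OO(r,d) \to E \to \OO(-r,-d) \to 0,
\]
with $d=d_E\geq 0$ and $\ell(Z)=0$. Since $c_2(E)=-2dr+\ell(Z)=0$, this forces $dr=0$, so either $d=0$, or $d>0$ and $r=0$. When $d=0$, pushing the sequence forward along $\pr_1$ (using $(\pr_1)_*\OO(k,0)=\OOO(k)$ and $R^1(\pr_1)_*\OO=0$) yields $(\pr_1)_*E=\OOO(r)\p\OOO(-r)$; since $r_E$ is normalized to be the larger of the two degrees, one may take $r\geq 0$.

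The next step is to establish decomposability. For $d=0$ and $r\geq 0$ one computes $\Ext(\OO(-r,0),\OO(r,0))=\HH^1(\OO(2r,0))=0$ by Remark~\ref{vanishingcohomology}, so $E=\OO(r,0)\p\OO(-r,0)$; symmetrically, for $r=0$ and $d\geq 1$ one has $\Ext(\OO(0,-d),\OO(0,d))=\HH^1(\OO(0,2d))=0$ and $E=\OO(0,d)\p\OO(0,-d)$. (The two subcases are interchanged by the involution swapping the two factors of $\P\times\P$, so it suffices to treat one in detail.) I would flag the splitting as the main obstacle: had I not fixed the sign $r\geq 0$ via the canonical normalization, the extension with a negative-degree sub-bundle need not split, so isolating the correct presentation is exactly what makes $E$ decomposable.

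Finally I would bound the twist. Take $E=\OO(r,0)\p\OO(-r,0)$ with $r\geq 0$; the sub-line bundle $G_1=\OO(r,0)$ has $\mu(G_1)=r$ while $\mu(E)=0$. Writing $\Phi=\left(\begin{smallmatrix}A&B\\ C&-A\end{smallmatrix}\right)$, the lower-left entry lies in $\HH^0(G_1^{\lor}\tt\OO(-r,0)\tt T)=\HH^0(\OO(2-2r,0))\p\HH^0(\OO(-2r,2))$. For $r\geq 2$ both summands vanish by Remark~\ref{vanishingcohomology}, so $C=0$ and $G_1$ is $\Phi$-invariant for \emph{every} $\Phi$; since $\mu(G_1)>\mu(E)$, no such pair is semistable, and $r\geq 2$ is excluded. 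Thus $r\in\{0,1\}$, giving $\OO\p\OO$ and $\OO(1,0)\p\OO(-1,0)$ (the case $r=1$ genuinely admits semistable fields by Lemma~\ref{indeedsemistable}, as the first summand is then $\OO(0,0)\neq 0$). The identical computation in the symmetric subcase forces $d=1$, producing $\OO(0,1)\p\OO(0,-1)$, and these three bundles exhaust the possibilities.
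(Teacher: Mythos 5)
Your proof is correct and follows essentially the same route as the paper: Theorem~\ref{extid} reduces $E$ to an extension of line bundles, $c_2=0$ forces the extension to split along one ruling, and the vanishing of the $(2,1)$-entry of $\Phi$ for twist at least $2$ produces a $\Phi$-invariant destabilizing sub-line bundle. The only cosmetic difference is in the splitting step: the paper notes that the extension class is pulled back from $\P$ and invokes splitting of bundles over $\P$, whereas you normalize $r\geq 0$ first so that the relevant $\Ext$ group vanishes outright --- both are fine.
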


\begin{proof}
It follows from Theorem \ref{extid} that $E$ is an extension of line bundles. As in the proof of Propostion \ref{budleandhiggsfields-F0}, one can easily check that $E$ is decomposable and of the form $E = \OO(a,0) \p \OO(-a,0)$ or $E = \OO(0,b) \p \OO(0,-b)$. Without loss of generality, we now assume that $E = \OO(a,0) \p \OO(-a,0)$, as the other case is analogous. Let us show that $a$ can only take the values $-1$, $0$ or $1$. Any Higgs field $\Phi \in \HH^0(\End_0E \tt T)$ has the form: 
\begin{displaymath}
\Phi=\left(\begin{array}{cc}
A_1 & B_1 \\
C_1 & -A_1 \\
\end{array}\right)
+ 
\left(\begin{array}{cc}
A_2 & B_2 \\
C_2 & -A_2 \\
\end{array}\right), 
\end{displaymath}
where $A_1 \in $ H$^0(\OO(2,0))$, $B_1 \in $ H$^0(\OO(2a+2,0))$, $C_1 \in \HH^0(\OO(2-2a,0))$, and $A_2 \in $ H$^0(\OO(0,2))$, $B_2 \in $ H$^0(\OO(2a,2))$, $C_2 \in $ H$^0(\OO(-2a,2))$.
If $a\geq 2$, then $C_1= C_2=0$, and so $\OO(a,0)$ would be $\Phi$-invariant. However, $\mu(\OO(a,0)) > \mu(E)$, which contradicts stability. A similar argument, but interchanging the roles of the $C_i$'s for the $B_i$'s, and of $\OO(a,0)$ for $\OO(-a,0)$, shows that $a > -2$. The result follows.
\end{proof}

We first consider $E=\OO \oplus \OO$, which is a strictly semistable bundle. It would be desirable to describe all the possible Higgs fields that $E$ admits. However, we do not yet know the shape of the Higgs fields $\Phi$ for which $(E, \Phi)$ is stable; i.e., those $\Phi$'s for which no copy of $\OO$ inside $E$ is $\Phi$-invariant. We do, however, describe those Higgs fields which are strictly semistable. We remind the reader that, in the moduli spaces $\MPP(c_1,c_2)$, two objects are identified if they are $S$-equivalent (see Section \ref{co-Higgsbundles}).

Any element of $\HH^0(\End_0 E \tt T)$ is of the form
\begin{displaymath}
\Phi=\Phi_1 + \Phi_2 =\left(\begin{array}{cc}
A_1 & B_1 \\
C_1 & -A_1 \\
\end{array}\right)
+
\left(\begin{array}{cc}
A_2 & B_2 \\
C_2 & -A_2 \\
\end{array}\right)
\end{displaymath}
with $A_1, B_1, C_1 \in \HH^0(\OO(2,0))$ and $A_2, B_2, C_2 \in \HH^0(\OO(0,2))$.

We now need to consider the $\Phi$'s as above, which are integrable; i.e., the $\Phi$'s that satisfy equations (\ref{integrability}). Furthermore, we focus on those Higgs fields which make $(E,\Phi)$ into a strictly semistable co-Higgs bundle.

A pair $(E,\Phi)$ is strictly semistable if and only if there is a $\Phi$-invariant copy of $\OO$ in $E$. This is equivalent to the existence of a non-zero $v \in \HH^0(E)$ such that $\Phi(v)= v \tt \lambda_1 + v \tt \lambda_2$, where $\lambda_1 \in \HH^0(\OO(2,0))$ and $\lambda_2 \in \HH^0(\OO(0,2))$. Writing $\Phi$ as
\begin{displaymath}
\Phi=(M_0 + M_1z_1 +M_2z_1^2)+(N_0+N_1z_2+N_2z_2^2),
\end{displaymath}
where the $M_i$'s and the $N_i$'s are $2 \times 2$ complex valued matrices, we see that $\Phi(v)=v \tt \lambda_1 + v \tt \lambda_2$ if and only if $v$ is a common eigenvector of the $M_i$'s and $N_i$'s (note that, in this case, the coefficients of $\lambda_1, \lambda_2$ are the eigenvalues of the $M_i$'s and the $N_i$'s, respectively). If this is the case, by a change of basis, we may assume that the $M_i$'s and $N_i$'s are upper triangular. Therefore, $(E,\Phi)$ is strictly semistable if and only if $\Phi$ is upper triangular and its matrix coefficients admit a common eigenvector. In this case, by Lemma \ref{gr}, we have that
\begin{displaymath}
\gr(E,\Phi)=\left(E, \left(\begin{array}{cc}
A_1 & 0 \\
0 & -A_1 \\
\end{array}\right)
+
\left(\begin{array}{cc}
A_2 & 0 \\
0 & -A_2 \\
\end{array}\right) \right).
\end{displaymath}
Note that two graded objects $\gr(E,\Phi)$ and $\gr(E,\Phi')$ are isomorphic if and only if $A_1+A_2= \pm (A_1'+A_2')$. Indeed, two matrices $\left(\begin{array}{cc}
A & 0 \\
0 & -A \\
\end{array}\right)$ and $\left(\begin{array}{cc}
B & 0 \\
0 & -B \\
\end{array}\right)$ live in the same S-equivalence class, that is
\begin{displaymath}
\left(\begin{array}{cc}
A & 0 \\
0 & -A \\
\end{array}\right)= \Psi \left(\begin{array}{cc}
B & 0 \\
0 & -B \\
\end{array}\right) \Psi^{-1},
\end{displaymath}
if and only if $A= \pm B$. Therefore, a set of representatives for the S-equivalence classes of strictly semistable pairs, with underlying bundle $E$, is given by
\begin{displaymath}
\left \{ \left(\begin{array}{cc}
A & 0 \\
0 & -A \\
\end{array}\right) \; : \; A \in \HH^0(T) \right \} / \sim
\end{displaymath}
where $\sim$ is defined by $A \sim B$ if and only if $A = \pm B$.

On the other hand, note that for the bundle $E=\OO(1,0)\oplus\OO(-1,0)$, any element of $\HH^0(\End_0 E \tt T)$ is of the form 
\begin{displaymath}
\Phi=\Phi_1 + \Phi_2 =\left(\begin{array}{cc}
A_1 & B_1 \\
C_1 & -A_1 \\
\end{array}\right)
+
\left(\begin{array}{cc}
A_2 & B_2 \\
0 & -A_2 \\
\end{array}\right)
\end{displaymath}
with $A_1 \in \HH^0(\OO(2,0))$, $B_1 \in \HH^0(\OO(4,0))$, $C_1 \in \HH^0(\OO)$ and $A_2 \in \HH^0(\OO(0,2))$ and $B_2 \in \HH^0(\OO(2,2))$. Note that, if $\Phi$ were a Higgs field of $E$, then $C_1$ must be non-zero, as otherwise it would leave $\OO(1,0)$ invariant, contradicting stability. Also, taking into account the integrability condition, equations (\ref{integrability}) imply that $A_2=B_2=0$. Therefore, any possible Higgs field of $E$ is of the form $\Phi=\Phi_1 \in \HH^0(\End_0 E \tt \OO(2,0))$, with $\Phi_1$ as above, and non-zero $C_1$. 

Now, we observe that $(E,\Phi)$ is in fact co-Higgs stable for any $\Phi$ as above. Note that the sub-line bundles of $E$ are of the form $\OO(r,s)$ with $r \leq 1$ and $s \leq 0$ or $r \leq -1$ and $s \leq 0$. As such, the only sub-line bundles that could potentially contradict stability are $\OO$ and $\OO(1,0)$. However, any degree zero sub-line bundle of $E$; that is, any copy of $\OO$ in $E$, is contained in $\OO(1,0)$; and so, since the latter is not $\Phi$-invariant, the result follows. 

We now claim that a set of representatives for the isomorphism classes (recall that in this case S-equivalence reduces to co-Higgs isomorphism) is given by
\begin{displaymath}
\left \{ \left(\begin{array}{cc}
0 & B \\
1 & 0 \\
\end{array}\right) \; : \; B \in \HH^0(\OO(4,0)) \right \}.
\end{displaymath}

Indeed, given any Higgs field $\Phi$ as described above, letting 
\begin{displaymath}
\Psi=\left(\begin{array}{cc}
1 & -A_1 \\
0 & 1 \\
\end{array}\right)
\end{displaymath}
we get
\begin{displaymath}
\Psi \Phi \Psi^{-1}= \left(\begin{array}{cc}
0 & B_1-A_1^2 \\
1 & 0 \\
\end{array}\right).
\end{displaymath}

Similarly, for the bundle $E=\OO(0,1)\oplus\OO(0,-1)$, a set of representatives for the isomorphism class is
\begin{displaymath}
\left \{ \left(\begin{array}{cc}
0 & B \\
1 & 0 \\
\end{array}\right) \; : \; B \in \HH^0(\OO(0,4)) \right \}.
\end{displaymath}

Following the description of \cite[Section~8]{SR2} for the case of curves, we now show that we can view the set of stable co-Higgs bundles, in $\MPP(0,0)$, with underlying bundle $\OO(1,0) \p \OO(-1,0)$ or $\OO(0,1) \p \OO(0,-1)$ as sections of certain maps, as follows:

Consider the maps
\begin{displaymath}
\begin{array}{cccc}
h_1: & \MPP(0,0) & \to & \HH^0(\OO(4,0)) \\
        & (E, \Phi=\Phi_1+\Phi_2) & \mapsto & \det \Phi_1
\end{array}
\end{displaymath}
and
\begin{displaymath}
\begin{array}{cccc}
h_2: & \MPP(0,0) & \to & \HH^0(\OO(0,4)) \\
        & (E, \Phi=\Phi_1+\Phi_2) & \mapsto & \det \Phi_2.
\end{array}
\end{displaymath}
Define
\begin{displaymath}
\begin{array}{cccc}
Q_1: & \HH^0(\OO(4,0)) & \to & \MPP(0,0) \\
        & \rho & \mapsto & \left(\OO(1,0) \p \OO(-1,0), \Phi_1=\left(\begin{array}{cc}
0 & -\rho \\
1 & 0 \\
\end{array}\right)\right)
\end{array}
\end{displaymath}
and
\begin{displaymath}
\begin{array}{cccc}
Q_2: & \HH^0(\OO(0,4)) & \to & \MPP(0,0) \\
        & \rho & \mapsto & \left(\OO(0,1) \p \OO(0,-1), \Phi_2=\left(\begin{array}{cc}
0 & -\rho \\
1 & 0 \\
\end{array}\right)\right).
\end{array}
\end{displaymath}

Clearly, $Q_i$ is a section of $h_i$ for $i=1,2$. Moreover, by the above discussion, we have:

\begin{proposition}\label{modulispace00}
The images of the sections $Q_1, Q_2$ in $\MPP(0,0)$ are precisely the set of stable co-Higgs bundles with underlying bundle $\OO(1,0) \p \OO(-1,0)$ and $\OO(0,1) \p \OO(0,-1)$, respectively.
\end{proposition}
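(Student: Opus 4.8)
The plan is to verify two set-theoretic containments for each of $Q_1$ and $Q_2$; I will spell out the argument for $Q_1$, since the case of $Q_2$ is identical after interchanging the two copies of $\P$. Concretely, I want to show that the image $\I(Q_1)$ equals the set
\begin{displaymath}
\SSS_1:=\left\{(E,\Phi) \in \MPP(0,0) \; : \; E=\OO(1,0) \p \OO(-1,0)\right\}.
\end{displaymath}
First I would establish $\I(Q_1) \subseteq \SSS_1$. For each $\rho \in \HH^0(\OO(4,0))$, the pair $Q_1(\rho)$ has underlying bundle $\OO(1,0) \p \OO(-1,0)$ by construction, and its Higgs field $\Phi_1=\left(\begin{smallmatrix} 0 & -\rho \\ 1 & 0 \end{smallmatrix}\right)$ has non-zero $(2,1)$-entry $C_1=1$. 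By the discussion preceding Proposition \ref{modulispace00}, such a $\Phi_1$ is integrable (it lies entirely in $\HH^0(\End_0 E \tt \OO(2,0))$, so equations (\ref{integrability}) hold trivially with $\Phi_2=0$) and makes $(E,\Phi)$ stable, hence $Q_1(\rho) \in \SSS_1$.

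Next I would establish the reverse containment $\SSS_1 \subseteq \I(Q_1)$, which is the substantive direction. Take any stable pair $(E,\Phi) \in \SSS_1$ with $E=\OO(1,0) \p \OO(-1,0)$. The computation already carried out in this subsection shows that any such Higgs field must have the form $\Phi=\Phi_1$ with $\Phi_2=0$, with $C_1 \in \HH^0(\OO)$ necessarily non-zero (otherwise $\OO(1,0)$ is $\Phi$-invariant, contradicting stability); after rescaling we may take $C_1=1$. The normalization argument, conjugating by $\Psi=\left(\begin{smallmatrix} 1 & -A_1 \\ 0 & 1 \end{smallmatrix}\right)$, brings $\Phi$ into the representative $\left(\begin{smallmatrix} 0 & B_1-A_1^2 \\ 1 & 0 \end{smallmatrix}\right)$. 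Setting $\rho:=-(B_1-A_1^2) \in \HH^0(\OO(4,0))$, we obtain precisely $Q_1(\rho)$ up to isomorphism, so $(E,\Phi) \in \I(Q_1)$.

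The key remaining point is that this correspondence is genuinely well-defined at the level of the moduli space, i.e., that distinct isomorphism classes of stable pairs in $\SSS_1$ correspond to distinct $\rho$, so that $Q_1$ is injective and its image is cut out cleanly. This is exactly the content of the earlier claim that $\left\{\left(\begin{smallmatrix} 0 & B \\ 1 & 0 \end{smallmatrix}\right) : B \in \HH^0(\OO(4,0))\right\}$ is a \emph{set of representatives} for the isomorphism classes, and I would simply invoke it. I expect the main obstacle to be purely bookkeeping: one must be careful that the only automorphisms of $E=\OO(1,0)\p\OO(-1,0)$ available for conjugation are upper-triangular (since $\Hom(\OO(-1,0),\OO(1,0))=\HH^0(\OO(2,0))$ but $\Hom(\OO(1,0),\OO(-1,0))=0$), which is what forces the normal form to be unique and makes $B$ a genuine invariant of the isomorphism class. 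Once that is in hand, both containments combine to give $\I(Q_1)=\SSS_1$, and the symmetric argument gives $\I(Q_2)=\SSS_2$, completing the proof.
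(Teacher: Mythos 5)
Your proposal is correct and follows essentially the same route as the paper, which proves the proposition simply by appealing to the immediately preceding discussion: the classification of integrable Higgs fields on $\OO(1,0)\p\OO(-1,0)$ as $\Phi=\Phi_1$ with $C_1\neq 0$, the stability check against the sub-line bundles $\OO$ and $\OO(1,0)$, and the normal form $\left(\begin{smallmatrix} 0 & B \\ 1 & 0 \end{smallmatrix}\right)$ obtained by conjugation. Your organization into two containments, together with the observation that $\Hom(\OO(1,0),\OO(-1,0))=0$ forces automorphisms to be upper triangular (so the normal form is a genuine invariant), just makes explicit what the paper leaves implicit; the only discrepancy is the inherited sign in $\rho=-(B_1-A_1^2)$, which traces back to a typo in the paper (conjugation by $\Psi$ actually yields $B_1+A_1^2$ in the upper-right entry, consistent with $\rho=\det\Phi_1$), and does not affect the argument.
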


\begin{remark}
Note that every point in $\MPP(0,0)$ with underlying bundle $\OO(1,0) \p \OO(-1,0)$ or $\OO(0,1) \p \OO(0,-1)$ is the pullback from $\P$ to $\P \times \P$ of a stable co-Higgs pair with underlying bundle $\OOO(1) \p \OOO(-1)$ with respect to the first or second projections, respectively. Similarly, any point in $\MPP(0,0)$ with underlying bundle $\OO \p \OO$ can be obtained from a stable co-Higgs bundle $(\OOO \p \OOO, \phi)$ over $\P$ by taking the pullback of $\phi$ with respect to the two projections; i.e., $(\OO \p \OO, \pr_1^*\phi + \pr_2^*\phi)$.
\end{remark}

\subsection{Second Chern Class $c_2=1$}\label{msc2=1}

We now turn our attention to co-Higgs pairs $(E, \Phi)$ over $\P \times \P$ with $c_1=-F$ and $c_2=1$. Once again, recall that, in this case, stability and semistability are identical notions. In studying the possible underlying bundles for stable co-Higgs  pairs $(E,\Phi)$, we will show that $E$ is an extension of $\OO(-1,1)$ by $\OO(0,-1)$.

We begin by proving a technical lemma. 

\begin{lemma}\label{r=0}
Let $x$ be a point in $\P \times \P$ and suppose that $c_2(E)=1$. If $E$ fits into the exact sequence
\begin{equation}\label{ess0}
0 \to  \OO \xrightarrow{\iota} E \xrightarrow{p} \OO(-1,0) \tt I_x \to 0,
\end{equation}
then $\OO$ is $\Phi$-invariant for any $\Phi \in \HH^0(\End_0 E \tt T)$.
\end{lemma}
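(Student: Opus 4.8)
The plan is to apply Lemma \ref{isb} with $W = \OO$. That lemma says that if every restriction $\Phi|_{\OO}$ (i.e., every element of the set $S$ of homomorphisms $\OO \to E \tt T$ that arise as restrictions of global endomorphism-valued fields) factors through the inclusion-induced map $\boldsymbol\iota : \HH^0(\Hom(\OO,\OO\tt T)) \to \HH^0(\Hom(\OO, E\tt T))$, then $\OO$ is $\Phi$-invariant for every $\Phi$. So the goal reduces to showing $S \subseteq \I(\boldsymbol\iota)$. Since $\Hom(\OO,\OO\tt T) = T$ and $\Hom(\OO, E\tt T) = E\tt T$, the map $\boldsymbol\iota$ is just the map on $\HH^0$ induced by $\iota\tt\Id_T : \OO\tt T \hookrightarrow E\tt T$, whose image is exactly the sections of $E\tt T$ coming from $\HH^0(\OO\tt T) = \HH^0(T)$.

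First I would tensor the sequence (\ref{ess0}) by $T = \OO(2,0)\p\OO(0,2)$ and take the long exact cohomology sequence. This gives
\begin{displaymath}
0 \to \HH^0(T) \xrightarrow{\boldsymbol\iota} \HH^0(E \tt T) \to \HH^0(\OO(-1,0)\tt I_x \tt T) \to \cdots.
\end{displaymath}
The cokernel of $\boldsymbol\iota$ (or rather, what $\I(\boldsymbol\iota)$ fails to capture) is controlled by $\HH^0(\OO(-1,0)\tt I_x \tt T)$. I would compute this group directly: $\OO(-1,0)\tt T = \OO(1,0)\p\OO(-1,2)$, so
\begin{displaymath}
\HH^0(\OO(-1,0)\tt I_x \tt T) = \HH^0(\OO(1,0)\tt I_x) \p \HH^0(\OO(-1,2)\tt I_x).
\end{displaymath}
The second summand vanishes since $\HH^0(\OO(-1,2)) = 0$ by Remark \ref{vanishingcohomology}. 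The first summand is the space of sections of $\OO(1,0)$ vanishing at $x$; since $\HH^0(\OO(1,0))$ is $2$-dimensional and the single point condition cuts it down, this group is (generically) nonzero, so $\boldsymbol\iota$ need not be surjective. This is the crux: the naive cohomology count does not immediately give $S \subseteq \I(\boldsymbol\iota)$.

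The hard part, and the main obstacle, is therefore showing that the set $S$ of \emph{actual restrictions} of genuine endomorphism-valued Higgs fields lands inside $\I(\boldsymbol\iota)$, even though the ambient $\HH^0(E\tt T)$ is strictly larger than $\I(\boldsymbol\iota)$. The point is that $S$ is not all of $\HH^0(E\tt T)$: it consists only of those $\varphi$ extending to an endomorphism of $E$ (not merely a map $\OO\to E\tt T$). I would make this precise by dualizing (\ref{ess0}) and tensoring by $E\tt T$, producing the exact sequence
\begin{displaymath}
0 \to \HH^0(\OO(1,0)\tt I_x \tt E \tt T) \to \HH^0(\End E \tt T) \xrightarrow{\epsilon} \HH^0(E\tt T),
\end{displaymath}
where $\epsilon$ sends $h$ to $h\circ\iota$, and $S = \I(\epsilon)$. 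I would then argue that the composite $\HH^0(\End E\tt T)\to\HH^0(E\tt T)\to\HH^0(\OO(-1,0)\tt I_x\tt T)$ is zero, which forces $S\subseteq\K$ of the quotient map, i.e.\ $S\subseteq\I(\boldsymbol\iota)$. Establishing that this composite vanishes is the technical heart: I expect to use the specific numerical data ($c_2=1$, so $\ell(Z)=|\{x\}|=1$) to show that any section of $\End E\tt T$, when pushed to $\OO(-1,0)\tt I_x\tt T$, must vanish — concretely, by checking that the relevant $(2,1)$-type component, valued in $\HH^0(\OO(1,0)^{\lor}\tt\OO\tt I_x\tt T)$ or its analogue, has no room to be nonzero once the vanishing from Remark \ref{vanishingcohomology} and the point-constraint from $I_x$ are combined. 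Once $S\subseteq\I(\boldsymbol\iota)$ is in hand, Lemma \ref{isb} closes the argument.
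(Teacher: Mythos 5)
Your setup is exactly the paper's: reduce to showing $S \subseteq \I(\boldsymbol\iota)$ via Lemma \ref{isb}, tensor (\ref{ess0}) by $T$, observe that $\HH^0(\OO(-1,0)\tt I_x\tt T) = \HH^0(I_x(1,0)) \cong \C \neq 0$ so that $\boldsymbol\iota$ is not surjective, and then try to show that the composite $\varphi \mapsto p(\varphi)$ kills every actual restriction $\varphi = \Phi\circ\iota$. Up to that point you and the paper agree, and you correctly identify this last vanishing as the crux.

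However, you do not prove the crux, and the route you sketch for it would not work. You propose to show the composite vanishes ``by checking that the relevant $(2,1)$-type component \dots has no room to be nonzero once the vanishing from Remark \ref{vanishingcohomology} and the point-constraint from $I_x$ are combined.'' But you have already computed that the target $\HH^0(\OO(-1,0)\tt I_x\tt T) \cong \HH^0(I_x(1,0))$ is a nonzero (one-dimensional) space, so no cohomological vanishing or dimension count can force $p(\varphi)=0$; a genuinely pointwise argument is required. The paper's argument is: since the cokernel $\OO(-1,0)\tt I_x$ of $\iota$ fails to be locally free at $x$, the map $\iota$ must vanish at $x$ (i.e.\ $\iota\tt k(x)=0$ on fibres), hence $p\circ\Phi\circ\iota$ vanishes at $x$; on the other hand, under the identification $\HH^0(\OO(-1,0)\tt I_x\tt T)\cong \HH^0(\P;\OOO)$ obtained from $\pi_*(I_x)=\OOO(-1)$, every nonzero section of $\OO(-1,0)\tt I_x\tt T$ is nowhere vanishing, so a section vanishing at $x$ must be identically zero. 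This pair of observations --- the degeneration of $\iota$ at $x$ and the nowhere-vanishing of nonzero sections of the quotient --- is the missing idea; without it your proof does not close.
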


\begin{proof}
First note that since $\Ext(\OO(-1,0) \tt I_x, \OO) \cong \HH^0(\OO_x) \cong \C$, up to isomorphism, there is a unique $E$ that fits into (\ref{ess0}). Now, tensoring (\ref{ess0}) with $T$ and passing to the long exact sequence in cohomology, we get
\begin{equation*}\label{es00}
0 \to \HH^0(T) \xrightarrow{\iota} \HH^0(E \tt T) \xrightarrow{p} \HH^0(\OO(-1,0) \tt I_x \tt T) \to 0,
\end{equation*}
where we also denote by $\iota$ and $p$ the induced map $\iota \tt \Id_T$ and $p \tt \Id_T$, respectively. Note that $\I(\iota) = \K(p)$. Moreover, using Grothendieck--Riemann--Roch \cite[Appendix~A]{H}, one can check that $\pi_*(I_x) = \OOO(-1)$. Therefore,
\begin{displaymath}
\begin{array}{ccl}
\HH^0(\OO(-1,0) \tt I_x \tt T) & = & \HH^0(\P \times \P; I_x(1,0)) \\
                                           & = & \HH^0(\P; \pi_*(I_x(1,0))) \\
                                           & = & \HH^0(\P; \OOO(1) \tt \pi_*(I_x)) \\
                                           & = & \HH^0(\P;\OOO),
\end{array}
\end{displaymath}
so the non-zero elements of $\HH^0(\OO(-1,0) \tt I_x \tt T)$ are nowhere vanishing.

With this in mind, we now claim that if 
$$S=\{\varphi \in \HH^0(\Hom(\OO,E \tt T)\; |\; \varphi=\Phi_{|\OO}, \Phi \in \HH^0(\End_0 E \tt T) \},$$ 
$S \subseteq \I(\iota)$, implying that $\OO$ is $\Phi$-invariant for any $\Phi \in \HH^0(\End_0E\tt T)$, by Lemma \ref{isb}. Indeed, let $\varphi \in S$ so that $\varphi=\Phi \circ \iota$ for some $\Phi \in \HH^0(\End_0 E \tt T)$. Then, 
$$p(\varphi)=p\circ \Phi \circ \iota,$$
which vanishes at $x$ since $\iota$ does (otherwise the quotient $E/\iota(\OO)$ would be locally free). Hence, $p(\varphi)=0$ because non-zero elements of $\HH^0(\OO(-1,0)) \tt I_x \tt T$ are nowhere vanishing, proving that $\varphi \in \K(p)= \I(\iota)$.
\end{proof}


We are now ready to prove that the only possible underlying bundles for stable co-Higgs  pairs are extensions of $\OO(-1,1)$ by $\OO(0,-1)$.

\begin{proposition}\label{allunderlying1}
Let $(E,\Phi)$ be a stable co-Higgs pair such that $c_1(E)=-F$ and $c_2(E)=1$. Then $E$ is an extension of $\OO(-1,1)$ by $\OO(0,-1)$.
\end{proposition}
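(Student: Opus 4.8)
\textbf{The plan is to} classify the possible underlying bundles $E$ of a stable co-Higgs pair with $c_1(E)=-F$ and $c_2(E)=1$ by using the extension structure (\ref{ABM}) together with the numerical invariants $d$ and $r$. By Theorem \ref{extid}, we already know $c_2 \geq 0$, and the boundary case $c_2 = 0$ was handled in the previous subsection; here $c_2=1$ so $E$ need not split, which is precisely why an extension description is the natural target. First I would write $E$ as an extension
\begin{displaymath}
0 \to \OO(r,d) \to E \to \OO(-r,-1-d) \tt I_Z \to 0,
\end{displaymath}
using that $c_1 = -F$ means $(\alpha,\beta)=(-1,0)$, so the quotient line bundle is $\OO(\beta-r,\alpha-d)=\OO(-r,-1-d)$, and compute $\ell(Z)=\ell(c_1,c_2,d,r)$ from the formula $c_2 - \alpha r - \beta d + 2dr = 1 + r + 2dr$. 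Since $E_{|F}\cong\OOO(d)\p\OOO(-1-d)$ forces $d \geq 0$, and $\ell(Z)\geq 0$ is required, I would enumerate the finitely many admissible pairs $(d,r)$ that survive both the nonnegativity of $\ell(Z)$ and, crucially, the stability constraint.

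\textbf{The key filtering step} is to rule out all sub-line bundles that would destabilize $E$ or be forced $\Phi$-invariant. For each candidate $(d,r)$ I would examine $\mu(\OO(r,d)) = r+d$ against $\mu(E) = -\tfrac12$. When $\mu(\OO(r,d)) > \mu(E)$, the argument of Theorem \ref{extid} (tensoring by $\OO(r,d)^{\lor}\tt T$ and using the vanishing of $\HH^0$ of the relevant twisted quotient) shows $\OO(r,d)$ is $\Phi$-invariant for every $\Phi$, contradicting stability; this eliminates the large-slope sub-bundles. The remaining delicate cases are those where the destabilizing slope is small, and here I expect to invoke Lemma \ref{r=0}: the case where $E$ sits in $0\to\OO\to E\to\OO(-1,0)\tt I_x\to 0$ (which is the $d=0$, $r=0$ configuration with $\ell(Z)=1$) is exactly ruled out by that lemma, since it forces $\OO$ to be $\Phi$-invariant with $\mu(\OO)=0>\mu(E)$. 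After discarding these, I would check that the surviving numerical data corresponds precisely to $\OO(r,d)=\OO(0,-1)$ sitting as a sub-bundle with quotient $\OO(-1,1)$ (modulo $I_Z$ with $\ell(Z)=0$, i.e.\ no points), giving the claimed extension of $\OO(-1,1)$ by $\OO(0,-1)$.

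\textbf{The main obstacle} I anticipate is the bookkeeping in the intermediate cases: for small $d$ and $r$ there may be several $(d,r)$ with $\ell(Z) = 0$ or $1$ that are not immediately killed by a naive slope comparison, and for each I must verify either that some sub-line bundle is unavoidably $\Phi$-invariant (destroying stability) or that the configuration is isomorphic to the target extension after a change of the role of the sub- and quotient line bundles. In particular I would need to be careful that an extension of $\OO(0,-1)$ by a line bundle of the same reduced type does not secretly admit a different, higher-slope destabilizing sub-bundle coming from the points in $Z$; controlling $\HH^0$ of the various twists $\OO(a,b)^{\lor}\tt E$, exactly as in the proof of Theorem \ref{extid} and Lemma \ref{r=0}, is the technical heart. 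Once every alternative $(d,r)$ is eliminated, the only surviving presentation is the asserted one, and the proof concludes.
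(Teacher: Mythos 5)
Your overall strategy --- enumerate the admissible invariants $(d,r)$ of the canonical extension, kill the large-slope cases by the $\Phi$-invariance argument of Theorem \ref{extid}, and invoke Lemma \ref{r=0} for the delicate $d=0$, $r=0$ configuration --- is exactly the paper's. But there are two concrete problems. First, an indexing error: in the paper's conventions $\OO(a,b)$ corresponds to the divisor $bC_0+aF$, so $c_1=-F$ means $(\alpha,\beta)=(0,-1)$, not $(-1,0)$; the quotient in the canonical extension is $\OO(-1-r,-d)\tt I_Z$ and $\ell(Z)=1+d(1+2r)$, not $\OO(-r,-1-d)\tt I_Z$ and $1+r(1+2d)$. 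With your formulas the case enumeration (and the constraint coming from Theorem \ref{ABM1}) comes out wrong, and your own target case is internally inconsistent: you claim the survivor is $\OO(r,d)=\OO(0,-1)$, i.e.\ $d=-1$, which violates the bound $d\geq 0$ that you correctly derived from the splitting type on the general fibre.

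Second, and more substantively, you expect a single surviving case with $\ell(Z)=0$ whose canonical presentation is literally the claimed extension. In fact two numerical types survive. One is $(d,r)=(1,-1)$ with $\ell=0$, whose canonical extension has sub-bundle $\OO(-1,1)$ and quotient $\OO(0,-1)$ --- the reverse order from the statement --- and one must observe $\HH^1(\OO(-1,2))=0$ to conclude it splits as $\OO(-1,1)\p\OO(0,-1)$. The other is $(d,r)=(0,-1)$ with $\ell=1$, whose canonical presentation is $0\to\OO(-1,0)\to E\to I_x\to 0$; this is not visibly an extension of $\OO(-1,1)$ by $\OO(0,-1)$ at all. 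The paper closes this gap by noting that $\Ext(I_x,\OO(-1,0))\cong\HH^0(\OO_x)=\C$, so there is a unique bundle with these invariants, and then computing that any non-trivial extension of $\OO(-1,1)$ by $\OO(0,-1)$ also has invariants $d=0$, $r=-1$ (restrict to the generic fibre to get $d$, push forward to get $r$), whence the two bundles coincide. Without this identification step your argument does not reach the stated conclusion.
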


\begin{proof}
Let $E$ have invariants $d$ and $r$. Then we know that $E$ fits into an exact sequence of the form
\begin{equation}\label{es1}
0 \to \OO(r,d) \to E \to \OO(-1-r,-d) \tt I_Z \to 0,
\end{equation}
with $\ell(Z)=1+d(2r+1)\geq 0$. Now, for such a rank-2 vector bundle to exist, we know by Theorem \ref{ABM1} that one of the following two conditions must be satisfied.
\begin{enumerate}
\item $d \geq 1$, or
\item $d=0$ and $r \geq -1$.
\end{enumerate}
In case (1), we consider two subcases: 

(i) $r \geq 0$. By tensoring (\ref{es1}) with $\OO(r,d)^{\lor} \tt T$, and passing to the long exact sequence in cohomology, we get
\begin{equation*}\label{es2}
0 \to \HH^0(T) \to \HH^0(E(-r,-d) \tt T) \to \HH^0(\OO(-1-2r,-2d) \tt T \tt I_Z) \to 0,
\end{equation*}
where $\HH^0(\OO(-1-2r,-2d) \tt T \tt I_Z)=0$, and so, by Lemma \ref{isb}, $\OO(r,d)$ is $\Phi$-invariant for any $\Phi \in \HH^0(\End_0 E \tt T)$, and thus destabilizing. This contradicts stability, and thus this case cannot happen.

(ii) $r \leq -1$. If either $r<-1$, or $d>1$ and $r=-1$, then $\ell(Z)<0$, which is impossible. Hence $d=1$ and $r=-1$, so that $\ell(Z)=0$, and therefore $E$ is an extension of $\OO(0,-1)$ by $\OO(-1,1)$. However, $\HH^1(\OO(-1,2))=0$, implying that $E=\OO(-1,1) \p \OO(0,-1)$.

\noindent
In case (2) we consider three subcases: 

(i) $r \geq 1$. By the exact same argument as above, one can check that $\OO(r,d)$ is $\Phi$-invariant for any $\Phi \in \HH^0(\End_0 E \tt T)$, and thus destabilizing. Hence this case cannot happen.

(ii) $r=0$. In this case, $E$ fits into an exact sequence of the form
\begin{equation*}
0 \to \OO \to E \to \OO(-1,0) \tt I_x \to 0,
\end{equation*}
and so, by Lemma \ref{r=0}, $\OO$ is $\Phi$-invariant for any $\Phi \in \HH^0(\End E \tt T)$, and thus destabilizing. Again, this case cannot happen.

(iii) $r=-1$. In this case, $E$ fits into an exact sequence of the form
\begin{equation*}
0 \to \OO(-1,0) \to E \to I_x \to 0.
\end{equation*}
Now, since $\Ext(I_x, \OO(-1,0)) = \HH^0(\OO_x)=\C$, there is a unique bundle, up to isomorphism, that fits into this exact sequence. Hence, $E$ is completely determined by the invariants $d=0$ and $r=-1$, up to isomorphism. On the other hand, any non-trivial extension $E'$ of $\OO(-1,1)$ by $\OO(0,-1)$ has invariants $d=0$ and $r=-1$. Indeed, the restriction of $E'$ to the generic fibre is a non-trivial extension of $\OOO(1)$ by $\OOO(-1)$ over $\P$, so $d=0$. Moreover, pushing down the extension
$$0 \to \OO(-1,1) \to E' \to \OO(0,-1) \to 0$$
to $\P$, we obtain
$$(\pr_1)_*(E)=(\pr_1)_*(\OO(-1,1))=\OOO(-1) \p \OOO(-1).$$
Thus, $r=-1$. Hence, $E \cong E'$ and $E$ is a non-trivial extension of $\OO(-1,1)$ by $\OO(0,-1)$.
\end{proof}

As before, now that we know the possible underlying bundles for stable co-Higgs pairs, we can check whether they admit (non-trivial) Higgs fields. 

We start by working with the trivial extension $E=\OO(0,-1) \p \OO(-1,1)$. In this case, any element of $\HH^0(\End_0E \tt T)$ is of the form
\begin{displaymath}
\Phi=\Phi_1 + \Phi_2 =\left(\begin{array}{cc}
A_1 & 0 \\
C_1 & -A_1 \\
\end{array}\right)
+
\left(\begin{array}{cc}
A_2 & B_2 \\
0 & -A_2 \\
\end{array}\right),
\end{displaymath}
with $A_1 \in \HH^0(\OO(2,0))$, $C_1 \in \HH^0(\OO(1,2))$, and $A_2 \in \HH^0(\OO(0,2))$, $B_2 \in \HH^0(\OO(1,0))$. Note that $B_2$ cannot be identically zero, for otherwise it would leave $\OO(-1,1)$ invariant, contradicting stability. Again, taking into account the integrability condition, equations (\ref{integrability}) imply that $A_1=C_1=0$. Therefore, any possible Higgs field of $E$ is of the form 
$$\Phi= \Phi_2 \in \HH^0(\End_0(0,2)),$$
with $\Phi_2$ as above and $B_2$ not identically zero. The fact that $(E,\Phi)$ is indeed co-Higgs stable for these Higgs fields follows from Lemma \ref{indeedsemistable}. 


Now, note that an automorphism $\psi$ of $E=\OO(0,-1) \p \OO(-1,1)$ can be chosen of the form
\begin{displaymath}
\psi=\left(\begin{array}{cc} 1 & 0 \\ 0 & P \end{array}\right) \in \HH^0(\End E), 
\end{displaymath}
where $P$ is a non-zero global section of $\OO$. We then have that 
\begin{displaymath}
\psi \Phi \psi^{-1}=\left(\begin{array}{cc}
A_2 & P^{-1}B_2 \\
0 & -A_2 \\
\end{array}\right).
\end{displaymath}
Since $B_2 \in \HH^0(\OO(1,0))$, we can locally write $B_2=\alpha(z_1-p)$, so by choosing $P=\alpha^{-1}$, we
have a representative of the conjugacy class of $\Phi$ of the form
\begin{equation}\label{phitrivial}
\Phi=\left(
\begin{array}{cc}
A_2 & z_1-p \\
0 & -A_2
\end{array}\right).
\end{equation}

We now turn our attention to the non-trivial extensions
\begin{equation}\label{nontriv00}
0 \rightarrow \OO(0,-1) \rightarrow E \rightarrow \OO(-1,1) \rightarrow 0.
\end{equation}
These, as they are stable bundles, admit the zero Higgs field. They nonetheless also admit non-zero Higgs fields. To prove this, we do the following:
\begin{enumerate}
\item[(i)] Compute the dimension $\h^0(\End_0 E \tt T)$.
\item[(ii)] Check which elements of $\HH^0(\End_0 E \tt T)$ satisfy the integrability condition.
\end{enumerate}

A direct computation gives (i):

\begin{lemma}\label{cohonontrivialext}
If $E$ is a non-trivial extension of $\OO(-1,1)$ by $\OO(0,-1)$, then $\h^0(\End_0 E(2,0))=6$ and $\h^0(\End_0 E(0,2))=5$. In particular,
$$\h^0(\End_0 E \tt T)=11.$$
\end{lemma}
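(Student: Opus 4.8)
The plan is to compute $\h^0(\End_0 E \tt T) = \h^0(\End_0 E(2,0)) + \h^0(\End_0 E(0,2))$ directly from the defining extension. Since $\End_0 E$ is a rank-3 bundle that sits inside $\End E \cong E^{\lor} \tt E$, the most economical route is to resolve $\End E(2,0)$ and $\End E(0,2)$ using the nontrivial extension (\ref{nontriv00}) and then subtract off the trivial summand $\OO$ coming from $\End E = \End_0 E \p \OO$. First I would dualize (\ref{nontriv00}) to get $0 \to \OO(1,-1) \to E^{\lor} \to \OO(0,1) \to 0$, and then tensor the sequence (\ref{nontriv00}) by $E^{\lor}(2,0)$ (and separately by $E^{\lor}(0,2)$). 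This yields a four-term filtration computation: $\End E(2,0) = E^{\lor} \tt E(2,0)$ fits, via the two extensions, into a diagram whose graded pieces are the four line bundles obtained from the pairwise tensor products of $\{\OO(0,-1),\OO(-1,1)\}$ with $\{\OO(1,-1),\OO(0,1)\}$, each twisted by $\OO(2,0)$.

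Concretely, I would write $\End E(2,0)$ as built from the four line bundles $\OO(0,-1) \tt \OO(1,-1)(2,0) = \OO(3,-2)$, $\OO(0,-1)\tt\OO(0,1)(2,0) = \OO(2,0)$, $\OO(-1,1)\tt\OO(1,-1)(2,0)=\OO(2,0)$, and $\OO(-1,1)\tt\OO(0,1)(2,0)=\OO(1,2)$, and analogously with the $(0,2)$ twist. Using the vanishing table in Remark~\ref{vanishingcohomology}, the $\HH^0$ and $\HH^1$ of each line bundle are immediate, so the dimension of $\HH^0(\End E(2,0))$ is pinned down by the long exact sequences, and the key point where the \emph{nontriviality} of the extension enters is that the relevant connecting (coboundary) maps in cohomology are nonzero, cutting down the naive count. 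After obtaining $\h^0(\End E(2,0))$ and $\h^0(\End E(0,2))$, I would subtract $\h^0(\OO(2,0)) = 3$ and $\h^0(\OO(0,2)) = 3$ respectively (the contributions of the $\OO$-summand $\Id_E$ under each twist) to extract $\h^0(\End_0 E(2,0)) = 6$ and $\h^0(\End_0 E(0,2)) = 5$; summing gives $11$.

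The main obstacle I anticipate is precisely establishing that the connecting homomorphisms are nonzero for the nontrivial extension, since a purely numerical Euler-characteristic count (which is insensitive to whether the extension splits) would give the same answer for the split bundle and would not distinguish the two cases. To handle this I would identify the relevant coboundary map with cup product against the extension class $e \in \HH^1(\OO(1,-2))$ (the class of (\ref{nontriv00}), living in $\Ext(\OO(-1,1),\OO(0,-1)) \cong \HH^1(\OO(1,-2))$): multiplication by $e$ sends a cohomology class of one graded piece to the next, and I would check on each affected summand that this map has the expected rank. A cleaner alternative, which I would prefer if the cup-product bookkeeping becomes unwieldy, is to compute $\h^0$ and $\h^1$ of the simpler twists $\HH^*(E^{\lor}(2,0))$ and $\HH^*(E(2,0))$ first (each from a single short exact sequence), and then feed these into the sequence obtained by tensoring the dual of (\ref{nontriv00}) by $E(2,0)$; this isolates the single place where the extension class acts and makes the nonvanishing check a one-line rank computation. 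Either way, once the coboundary ranks are fixed the remaining arithmetic is routine, and the stated values $6$, $5$, and $11$ follow.
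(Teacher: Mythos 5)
Your strategy (filter $\End E(2,0)$ and $\End E(0,2)$ by the four line bundles obtained from the extension and its dual, track the connecting maps, then subtract the trace summand) is sound, and for the $(0,2)$ twist it goes through with no difficulty: the naive count $2+3+3+0=8$ is exact because every relevant $\HH^1$ vanishes, giving $\h^0(\End_0 E(0,2))=5$ (here the non-triviality of the extension is irrelevant; the split bundle also gives $5$). The gap is in the $(2,0)$ twist, and it is the \emph{opposite} of the difficulty you flag. The naive count is $0+3+3+6=12$ and the answer is $9$; the connecting map $\HH^0(\OO(2,0))\to\HH^1(\OO(3,-2))$, which is multiplication by the linear form $uz_1+v$ representing the extension class, is injective and cuts exactly $3$ dimensions, so $\h^0(E(3,-1))=0$ and $\h^1(E(3,-1))=1$. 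Feeding this into $0\to E(3,-1)\to\End E(2,0)\to E(2,1)\to 0$ gives $\h^0(\End E(2,0))=9-\rk\bigl(\HH^0(E(2,1))\to\HH^1(E(3,-1))\bigr)$, so to reach $9$ (hence $6$ for the trace-free part and $11$ in total) you must show this second connecting map is \emph{zero}. Your phrasing (``the relevant coboundary maps are nonzero, cutting down the naive count''; ``nonvanishing check'') points the wrong way: if this map had full rank you would get $\h^0(\End_0 E(2,0))=5$ and a total of $10$.

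That vanishing is the real content and is not a one-line rank computation. Only the $3$-dimensional subspace $\HH^0(\OO(2,0))\subset\HH^0(E(2,1))$ dies for formal reasons (those sections lift to multiples of $\Id_E$); the remaining $6$-dimensional piece $\HH^0(\OO(1,2))$ maps to $\HH^1(E(3,-1))\cong\C$ via a secondary, lift-then-cup operation, not plain multiplication by $e$ on line-bundle cohomology, so its vanishing does not follow from the cup-product bookkeeping you describe. The paper closes exactly this point by a mechanism your plan lacks: it pushes the sequence forward along $\pr_1$, uses $(\pr_1)_*E(3,-1)=0$ and the fact that $R^1(\pr_1)_*E(3,-1)$ and $R^1(\pr_1)_*\End E(2,0)$ are skyscrapers at the single jumping point $x_0$ of the extension (of lengths $1$ and $\geq 1$, respectively) to force $(\pr_1)_*\End E(2,0)\cong(\pr_1)_*E(2,1)$, whence $\h^0(\End E(2,0))=\h^0(E(2,1))=9$. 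You would need either this relative argument or an explicit transition-function computation (as in Section \ref{msc2=1}) to justify the vanishing; as written, your proposal does not establish it and, taken at face value, leads to the wrong count.
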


\begin{proof}
The non-trivial extension $E$ is given by a class in 
$$\HH^1(\OO(1,-2))=\HH^0(\P,\OOO(1)) \p \HH^1(\P, \OOO(-2)),$$ 
which vanishes at a single point $x_0$ in the first factor of $\P \times \P$. Therefore,
\begin{displaymath}
E_{|_{F_x}} = 
\left\{ \begin{array}{ll}
\OOO \p \OOO & \text{ if  } x \neq x_0   \\
\OOO(-1) \p \OOO(1) & \text{ if  } x = x_0
\end{array} \right.
\end{displaymath}
where $F_x = (\pr_1)^{-1}(x)$.

Now, in order to compute the dimension of $\HH^0(\End_0E(2,0))$, take the dual sequence of (\ref{nontriv00}), tensor it by $E \tt \OO(2,0)$, and then push-forward it to the first copy of $\P$, we are left with:
\begin{displaymath}
0 \rightarrow (\!\pr_1\!)_*\! \End E(2,\!0) \rightarrow (\!\pr_1\!)_*\! E(2,\!1) \rightarrow R^1\!(\!\pr_1\!)_*\!E(3,\!-1) \rightarrow R^1\!(\!\pr_1\!)_*\!\End E(2,\!0) \rightarrow 0.
\end{displaymath}

Note that both $R^1(\pr_1)_*E(3,-1)$ and $R^1(\pr_1)_*\End E(2,0)$ are skyscraper sheaves supported at $x_0$. Hence, $(\pr_1)_*\End E(2,0) \cong (\pr_1)_*E(2,1)$, and so we get that $\HH^0(\End E(2,0)) = \HH^0(E(2,1))$. Now, tensoring (\ref{nontriv00}) by $\OO(2,1)$ and passing to the long exact sequence in cohomology, we get
\begin{displaymath}
0 \rightarrow \HH^0(\OO(2,0)) \rightarrow \HH^0(E(2,1)) \rightarrow \HH^0(\OO(1,2)) \rightarrow 0.
\end{displaymath}

Hence, $\h^0(E(2,1))=\h^0(\OO(2,0)) + \h^0(\OO(1,2)) = 9$, and so $\h^0(\End E(2,0))=9$. Finally, since $\End E(2,0) = \End_0 E(2,0) \p \OO(2,0)$, we get that $\h^0(\End_0E(2,0))=6$.

Now, in order to compute the dimension of $\HH^0(\End_0 E(0,2))$, take the dual sequence of (\ref{nontriv00}) and tensor it by $E(0,2)$ to get:
\begin{equation}\label{nontriv08}
0 \rightarrow \HH^0(E(1,1)) \rightarrow \HH^0(\End E(0,2)) \rightarrow \HH^0(E(0,3)) \rightarrow \HH^1(E(1,1)) \rightarrow \cdots
\end{equation}

In order to compute $\h^i(E(1,1))$, tensor (\ref{nontriv00}) by $\OO(1,1)$ and pass to the long exact sequence in cohomology:
\begin{displaymath}
0 \rightarrow \HH^0(\OO(1,0)) \rightarrow \HH^0(E(1,1)) \rightarrow \HH^0(\OO(0,2)) \rightarrow 0.
\end{displaymath}
Hence $\h^0(E(1,1))=\h^0(\OO(1,0)) + \h^0(\OO(0,2))= 5$ and $\h^1(E(1,1))=0$.

Now to compute $\h^0(E(0,3))$, tensor (\ref{nontriv00}) by $\OO(0,3)$ and pass to the long exact sequence in cohomology
\begin{displaymath}
0 \rightarrow \HH^0(\OO(0,2)) \rightarrow \HH^0(E(0,3)) \rightarrow 0
\end{displaymath}
Hence $\h^0(E(0,3))=\h^0(\OO(0,2))=3$. It now follows, from (\ref{nontriv08}), that $$\h^0(\End E(0,2))=\h^0(E(1,1)) + \h^0(E(0,3)) = 8.$$ Finally, we have that $\h^0(\End_0E(0,2))=\h^0(\End E(0,2)) - \h^0(\OO(0,2))=5$. Therefore, 
$$\h^0(\End_0E\tt T)=\h^0(\End_0E(2,0))+\h^0(\End_0(0,2))=11.$$
\end{proof}

Let us now determine which elements of $\HH^0(\End_0 E \tt T)$ satisfy the integrability condition. We begin by giving a local description of $\HH^0(\End_0E \tt T)$. In order to do so, we will need to compute the transition functions of $\End_0E \tt T$, and so we will work locally. We fix the standard open cover of $\P \times \P$:
\begin{displaymath}\begin{array}{l}
V_1=\U_0^1 \times \U_0^2 \\
V_2=\U_0^1 \times \U_{\infty}^2 \\
V_3=\U_{\infty}^1 \times \U_0^2 \\
V_4=\U_{\infty}^1 \times \U_{\infty}^2,
\end{array}
\end{displaymath}
where $\U_0^i$ is the affine open subset of the $i$-th copy of $\P$ that does not contain the point at infinity, and $\U_{\infty}^i$ is the affine open subset of the $i$-th copy of $\P$ that does not contain zero. Let us work on the intersection $V_{ij}:=V_i \cap V_j$. If we let $(uz_1+v)z_2^{-1}$ be the (non-zero) element in $\HH^1(\OO(1,-2))$ that determines the (non-trivial) extension $E$, then we know that in $V_{12}$ and $V_{13}$ the transition functions of $E$ are given by
\begin{displaymath}
g_{12}^E= \left(\begin{array}{cc} 
z_2^{-1} & (uz_1+v) \\
0 &  z_2
\end{array}\right),
\end{displaymath}
and
\begin{displaymath}
g_{13}^{E}= \left(\begin{array}{cc} 
1 & 0  \\
0 & z_1^{-1}
\end{array}\right).
\end{displaymath}
Thus, letting $g_{ij}^{(2,0)}$ and $g_{ij}^{(0,2)}$ denote the transition functions of $\End_0E(2,0)$ and $\End_0E(0,2)$, respectively, we have that
\begin{displaymath}
g_{12}^{(2,0)}= \left(\begin{array}{ccc} 
1 & 0 & uz_1z_2+vz_2 \\
-2(uz_1z_2^{-1}+vz_2^{-1}) & z_2^{-2} & -(uz_1+v)^2 \\
0 & 0 & z_2^2
\end{array}\right)
\end{displaymath}

\begin{displaymath}
g_{13}^{(2,0)}= \left(\begin{array}{ccc} 
z_1^2 & 0 & 0 \\
0 & z_1^{3} & 0 \\
0 & 0 & z_1
\end{array}\right)
\end{displaymath}

\begin{displaymath}
g_{12}^{(0,2)}= \left(\begin{array}{ccc} 
z_2^2 & 0 & uz_1z_2^3+vz_2^3 \\
-2(uz_1z_2+vz_2) & 1 & -(uz_1+v)^2z_2^2 \\
0 & 0 & z_2^4
\end{array}\right)
\end{displaymath}

\begin{displaymath}
g_{13}^{(0,2)}= \left(\begin{array}{ccc} 
1 & 0 & 0 \\
0 & z_1 & 0 \\
0 & 0 & z_1^{-1}
\end{array}\right).
\end{displaymath}
Note that the above transition functions are $3 \times 3$ matrices; thus, for this purpose, we will treat the trace-free sections $\Phi_1$ and $\Phi_2$ as $3 \times 1$ vectors. Let $\Phi_j^i$, $(j=1,2)$ be the trivialization of $\Phi_j$ on $V_i$. 

We will work on the open set $V_1$. In order to describe $\Phi_1^1 \in \HH^0(\End_0E(2,0))$, let 

\begin{displaymath}
\Phi_1^1=\left( \begin{array}{l}
\sum\limits_{i,j \geq 0}a_{ij}^1z_1^iz_2^j \\ 
\sum\limits_{i,j \geq 0}b_{ij}^1z_1^iz_2^j \\
\sum\limits_{i,j \geq 0}c_{ij}^1z_1^iz_2^j 
\end{array} \right).
\end{displaymath}
Using the fact that $\Phi_1^1=g_{13}^{(2,0)}\Phi_1^3$, a straightforward computation shows that  $a_{ij}^1=0$ for $i >2$, $b_{ij}^1=0$ for $i>3$ and $c_{ij}^1=0$ for $i>1$. Similarly, using the fact that $\Phi_1^1=g_{12}^{(2,0)}\Phi_1^2$ we get that $a_{ij}^1=0$ for $j >1$, $b_{ij}^1=0$ for $j>0$ and $c_{ij}^1=0$ for $j>2$. Furthermore, we get that

\begin{displaymath}
\begin{array}{c}
a_{00}^1= \frac{1}{2}vc_{01}^1  \\
a_{01}^1= vc_{02}^1 \\
a_{10}^1= \frac{u}{2}c_{01}^1 \\ 
a_{11}^1= uc_{02}^1+vc_{12}^1 \\   
a_{20}^1= \frac{u}{2}c_{11}^1 \\
a_{21}^1= uc_{12}^1 \\
b_{00}^1= -v^2c_{02}^1 \\ 
b_{10}^1= -(v^2c_{12}^1+2uvc_{02}^1) \\
b_{20}^1= -(u^2c_{02}^1+2uvc_{12}^1)  \\
b_{30}^1= -u^2c_{12}^1, \\
\end{array}
\end{displaymath}
and so 
\begin{equation}\label{phi1}
\Phi_1^1=\left(\begin{array}{rr}
A_1 & B_1 \\
C_1 & -A_1 
\end{array}\right), 
\end{equation}
where 
\begin{equation}\label{phi1entries}
\begin{array}{l}
A_1=  \frac{1}{2}vc_{01}^1 + vc_{02}^1z_2 + \frac{u}{2}c_{01}^1z_1 + (uc_{02}^1+vc_{12}^1)z_1z_2 + \frac{u}{2}c_{11}^1z_1^2 + uc_{12}^1z_1^2z_2, \\
B_1=  -v^2c_{02}^1-(v^2c_{12}^1+2uvc_{02}^1)z_1 -(u^2c_{02}^1+2uvc_{12}^1)z_1^2 -u^2c_{12}^1z_1^3, \\
C_1=c_{00}^1 + c_{01}^1z_2 + c_{02}^1z_2^2 + c_{10}^1z_1 + c_{11}^1z_1z_2 + c_{12}^1z_1z_2^2.
\end{array}
\end{equation}

\begin{remark}\label{yei}
Note that the above equations imply that, in $\Phi_1$, $A_1$ and $B_1$ depend on $C_1$. In particular, if $C_1$ is zero, then $A_1=B_1=0$ and $\Phi=0$. 
\end{remark}

We will now describe $\Phi_2^1 \in \HH^0(\End_0E(0,2))$. As before, let 
\begin{displaymath}
\Phi_2^1=\left( \begin{array}{l}
\sum\limits_{i,j \geq 0}a_{ij}^2z_1^iz_2^j \\
\sum\limits_{i,j \geq 0}b_{ij}^2z_1^iz_2^j \\
\sum\limits_{i,j \geq 0}c_{ij}^2z_1^iz_2^j 
\end{array} \right).
\end{displaymath}
Using the fact that $\Phi_2^1=g_{13}^{(0,2)}\Phi_2^3$, again, a straightforward computation shows that $a_{ij}^2=0$ for $i >0$, $b_{ij}^2=0$ for $i>1$ and $c_{ij}^2=0$ for all $i, j$. Similarly, using the fact that $\Phi_2^1=g_{12}^{(0,2)}\Phi_2^2$, we get that $a_{ij}^2=0$ for $j >2$ and $b_{ij}^2=0$ for $j>1$. Furthermore, we get that
\begin{displaymath}
\begin{array}{c}
b_{01}^2= -2va_{02}^2 \\
b_{11}^2= -2ua_{02}^2 
\end{array}
\end{displaymath}
and so 
\begin{equation}\label{phi2}
\Phi_2^1=\left(\begin{array}{rr}
A_2 & B_2 \\
0 & -A_2 
\end{array}\right), 
\end{equation}
where 
\begin{equation}\label{phi2entries}
\begin{array}{l}
A_2=  a_{00}^2 + a_{01}^2z_2 + a_{02}^2z_2^2, \\
B_2=  b_{00}^2 + b_{10}^2z_1 - 2(uz_1+v)a_{02}^2z_2.
\end{array}
\end{equation}

For the following lemma we use the notation described above. 

\begin{lemma}\label{phi1orphi2}
Let $\Phi \in \HH^0(\End_0E \tt T)$ be integrable. If $C_1=0$, then $\Phi=\Phi_2$. Otherwise, $\Phi=\Phi_1$.
\end{lemma}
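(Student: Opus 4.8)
The plan is to substitute the explicit local normal forms (\ref{phi1}) and (\ref{phi2}) of $\Phi_1^1$ and $\Phi_2^1$ into the integrability equations (\ref{integrability}), taking advantage of the key structural feature that is already visible in (\ref{phi2}): on the chart $V_1$ the lower-left entry of $\Phi_2^1$ vanishes identically, that is, $C_2 = 0$. Feeding $C_2 = 0$ into (\ref{integrability}) collapses the three integrability conditions to
\begin{align*}
C_1 B_2 &= 0, \\
A_1 B_2 &= B_1 A_2, \\
C_1 A_2 &= 0.
\end{align*}
The entire argument then reduces to a dichotomy governed by whether the entry $C_1$ of $\Phi_1^1$, written out explicitly in (\ref{phi1entries}), is identically zero.

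First I would dispose of the case $C_1 = 0$, where there is nothing to compute: Remark \ref{yei} already records that $A_1$ and $B_1$ are forced to vanish when $C_1 = 0$, so $\Phi_1 = 0$ and hence $\Phi = \Phi_2$, as claimed. For the case $C_1 \neq 0$, I would deduce that $\Phi_2$ vanishes. Since $V_1 \cong \C \times \C$, the entries are holomorphic functions on $V_1$, which form an integral domain; thus from $C_1 A_2 = 0$ and $C_1 B_2 = 0$ together with $C_1 \not\equiv 0$ one concludes $A_2 = 0$ and $B_2 = 0$ on $V_1$. Therefore $\Phi_2^1 = 0$, and because $V_1$ is dense in $\P \times \P$ and $\End_0 E(0,2)$ is locally free, the global section $\Phi_2$ vanishes, giving $\Phi = \Phi_1$. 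The middle relation $A_1 B_2 = B_1 A_2$ is then automatically satisfied in both cases and plays no role.

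The proof is short precisely because the genuinely laborious step lies upstream, in the assembly of the local expressions (\ref{phi1entries}) and (\ref{phi2entries}) and in the observation recorded in Remark \ref{yei}; given those, only two small points require care. The first is the legitimacy of cancelling $C_1$ from the products $C_1 A_2$ and $C_1 B_2$, which rests on the integral-domain property of the ring of holomorphic functions on the affine chart. The second is the passage from vanishing on the dense chart $V_1$ to genuine vanishing of the global sections $\Phi_1$ or $\Phi_2$; I would note that this is immediate since a section of a vector bundle that vanishes on a dense open set vanishes identically.
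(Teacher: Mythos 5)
Your proposal is correct and follows essentially the same route as the paper's proof: restrict to the chart $V_1$, invoke Remark \ref{yei} (i.e., the dependence of $A_1,B_1$ on $C_1$ from (\ref{phi1entries})) when $C_1=0$, and use the integrability equations (\ref{integrability}) together with $C_2=0$ to kill $A_2$ and $B_2$ when $C_1\neq 0$. You merely spell out two points the paper leaves implicit — the cancellation of $C_1$ via the integral-domain property and the passage from vanishing on the dense chart to global vanishing — which is fine.
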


\begin{proof}
It suffices to prove the lemma on the open set $V_1$. Indeed, in any other standard open set, the Higgs field is a conjugation (by the transition functions of $E$) of its trivialization on $V_1$. Recall that, since $\Phi$ is integrable, it satisfies equations (\ref{integrability}). Now, it is clear that if $C_1=0$, then $A_1=B_1=0$ (this follows simply by the shape of $A_1, B_1, C_1$, see (\ref{phi1entries})), and so $\Phi=\Phi_2$. On the other hand, if $C_1 \neq 0$, then $A_2=B_2=0$ (this follows from equations (\ref{integrability})), and so $\Phi=\Phi_2$. 
\end{proof}

We now aim to give a geometric description of the moduli space $\MPP(-F,1)$ of rank 2 stable co-Higgs bundles with first Chern class $-F$ and second Chern class~$1$. We have seen that if $(E,\Phi) \in \MPP(-F,1)$, then $E$ is an extension of $\OO(-1,1)$ by $\OO(0,-1)$. Such extensions are parametrized (up to strong isomorphism) by $\HH^1(\OO(1,-2)) = \C^2$, and have transition functions on $V_{12}$ given by
\begin{displaymath}
\left(\begin{array}{cc} 
g_{12} & uz_1+v \\
0 &  g'_{12}
\end{array}\right),
\end{displaymath}
for $(u,v) \in \C^2$. We will use the convenient notation $E=E_{u,v}$.

\begin{lemma}\label{uptowi}
Let $E$ and $E'$ be extensions of $\OO(-1,1)$ by $\OO(0,-1)$. If $E$ and $E'$ are isomorphic as vector bundles, then $E$ and $E'$ are weakly isomorphic extensions.
\end{lemma}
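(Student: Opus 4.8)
The plan is to understand the two notions of equivalence for extensions of $\OO(-1,1)$ by $\OO(0,-1)$ and to show that a vector-bundle isomorphism between two such extensions must respect the sub-line-bundle structure, so that it becomes a weak isomorphism. Recall that extensions are parametrized by $\HH^1(\OO(1,-2)) \cong \C^2$, with the extension class written as $(u,v)$, and the bundle denoted $E_{u,v}$. A \emph{strong} isomorphism of extensions fixes both $\OO(0,-1)$ and $\OO(-1,1)$ (giving the identification with $\C^2$ up to scalar), whereas a \emph{weak} isomorphism is allowed to act by an automorphism on each of the sub- and quotient-line bundles. The goal is thus to promote an abstract bundle isomorphism $\psi\colon E_{u,v} \to E_{u',v'}$ to one that carries the sub-bundle $\OO(0,-1)$ of $E_{u,v}$ into the sub-bundle $\OO(0,-1)$ of $E_{u',v'}$.

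First I would argue that the copy of $\OO(0,-1)$ sitting inside any such extension $E$ is canonical, in the sense that it is the unique sub-line bundle of $E$ of slope $-1/2$ realizing the maximal destabilizing (or at least the extremal) sub-line bundle. Concretely, I would show that $\Hom(\OO(0,-1), E_{u',v'})$ is one-dimensional: tensoring the defining sequence $0 \to \OO(0,-1) \to E_{u',v'} \to \OO(-1,1) \to 0$ by $\OO(0,1)$ and passing to cohomology gives $0 \to \HH^0(\OO) \to \HH^0(E_{u',v'}(0,1)) \to \HH^0(\OO(-1,2)) \to \cdots$, and since $\HH^0(\OO(-1,2)) = 0$ by Remark~\ref{vanishingcohomology}, we get $\HH^0(E_{u',v'}(0,1)) \cong \HH^0(\OO) \cong \C$. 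Hence the inclusion $\OO(0,-1) \hookrightarrow E_{u',v'}$ is, up to scalar, the only nonzero map, so its image is the unique such sub-line bundle. The same holds for $E_{u,v}$.

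Next I would use this uniqueness to constrain $\psi$. Precomposing the inclusion $\iota\colon \OO(0,-1) \hookrightarrow E_{u,v}$ with $\psi$ gives a nonzero element $\psi \circ \iota \in \Hom(\OO(0,-1), E_{u',v'})$, which by the one-dimensionality above must be a nonzero scalar multiple of the canonical inclusion $\iota'$ of $E_{u',v'}$. In particular $\psi$ maps the sub-bundle $\OO(0,-1) \subset E_{u,v}$ isomorphically onto $\OO(0,-1) \subset E_{u',v'}$, inducing a nonzero (hence invertible) endomorphism of $\OO(0,-1)$, i.e.\ a nonzero scalar. Passing to quotients, $\psi$ then induces an isomorphism $\OO(-1,1) \to \OO(-1,1)$, again a nonzero scalar. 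Thus $\psi$ fits into a commutative diagram of the two short exact sequences where the left and right vertical maps are scalar automorphisms of the line bundles, which is exactly the statement that $E$ and $E'$ are weakly isomorphic extensions.

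The main obstacle I anticipate is pinning down the dimension count $\h^0(E_{u',v'}(0,1)) = 1$ cleanly and, more importantly, verifying that the canonical sub-line bundle is genuinely unique rather than merely generic — one must rule out the possibility that a generic fibre splitting produces an unexpected extra sub-bundle of the same slope. The cleanest route is the $\Hom$-space computation above, since it directly yields uniqueness of the destabilizing inclusion and thereby forces $\psi$ to be filtered; I would make sure the relevant cohomology vanishing ($\HH^0(\OO(-1,2))=0$) is invoked explicitly from Remark~\ref{vanishingcohomology}. Everything after that — extracting the two scalars and assembling the diagram — is formal.
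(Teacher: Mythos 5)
Your proof is correct, but it takes a genuinely different route from the paper's. The paper argues via the jumping locus of the restriction to fibres of $\pr_1$: writing $E=E_{u,v}$ with extension class $(uz_1+v)z_2^{-1}$, the restriction $E|_{\{z\}\times\P}$ is $\OOO\p\OOO$ except at the single zero of $uz_1+v$, where it is $\OOO(-1)\p\OOO(1)$; since a bundle isomorphism forces the jumping loci of $E$ and $E'$ to coincide, the two linear forms share their zero and are therefore proportional, which is exactly weak isomorphism of the extension classes. You instead compute $\Hom(\OO(0,-1),E')\cong\HH^0(E'(0,1))\cong\C$ from the vanishing $\HH^0(\OO(-1,2))=0$, conclude that the sub-line bundle $\OO(0,-1)$ is canonical, and deduce that any bundle isomorphism is automatically filtered, inducing scalar automorphisms on the sub- and quotient line bundles. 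Both arguments are sound (your $\Hom$ computation is correct with the paper's conventions, and one-dimensionality does force all nonzero maps to share their image, disposing of the worry you raise at the end). The paper's version is short and concrete given that the transition functions and fibrewise splitting types are already on the table for the surrounding computations; yours is more intrinsic, produces the commutative diagram defining a weak isomorphism directly rather than inferring it from proportionality of classes in $\HH^1(\OO(1,-2))$, handles the trivial extension without a separate case, and would generalize to other extensions on other surfaces where the analogous $\Hom$ space is one-dimensional.
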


\begin{proof}
Let $E=E_{u,v}$ and $E'=E_{u',v'}$. Also, let $p=uz_1+v$ and $p'=u'z_1+v'$. Now, suppose $\alpha \in \P$ is a zero of $p$. We have that for each $z \in \P$, $E_{|\{z\} \times \P} \cong E'_{|\{z\} \times \P}$. Since the only extensions of $\OO(1)$ by $\OO(-1)$ over $\P$ are the split one $\OOO(-1) \p \OOO(1)$ and $\OOO \p \OOO$, we have that
\begin{displaymath}
E_{|_{\{z\} \times \P}} = 
\left\{ \begin{array}{ll}
\OOO \p \OOO & \text{ if  } z \neq \alpha   \\
\OOO(-1) \p \OOO(1) & \text{ if  } z = \alpha.
\end{array} \right.
\end{displaymath}
Since the same can be said of $E'_{|_{\{z\} \times \P}}$ and $p'$, we have that $p$ and $p'$ have exactly the same zeroes. Hence, $E$ and $E'$ are weakly isomorphic.
\end{proof}

\begin{remark}
Recall that, up to weak isomorphism, non-trivial extensions of $\OO(-1,1)$ by $\OO(0,-1)$ are parametrized by $\mathbb{P}(\HH^1(\OO(1,-2))) = \P$.
\end{remark} 

Let $X_0:=\{(E,\Phi) \in \MPP(-F,1): \; E \text{ is the trivial extension} \}$. By (\ref{phitrivial}), we have that $X_0 = \P \times \C^3$. Now, let us fix a non-trivial extension $E_{u,v}$, and let $X_{u,v}$ be the set of elements in $\MPP(-F,1)$ with underlying bundle $E_{u,v}$. By (\ref{phi1}), (\ref{phi1entries}), (\ref{phi2}), (\ref{phi2entries}) and Lemma \ref{phi1orphi2}, we get that
\begin{displaymath}
X_{u,v}=\{(\bar{x},\bar{y}) \in \C^6 \times \C^5; \; x_iy_j=0, \; 1 \leq i \leq 6 \text{ and } 1 \leq j \leq 5\}.
\end{displaymath}
Since $X_{u,v}$ is the union of the two subspaces $\bar{x}=0$ and $\bar{y}=0$, $\dim X_{u,v}=6$. It follows that:

\begin{proposition}\label{modulispace1beforequotiening}
The space $S=S_0 \cup S_1 \cup S_2$, where 
\begin{displaymath}
\begin{array}{ccl} 

\vspace{.05in}

S_0 &= &\{((u,v), (p,\bar{w}), (\bar{x}, \bar{y})) \in \C^2 \times \P \times \C^3 \times \C^6 \times \C^5: \; u=v=\bar{x}=\bar{y}=0 \}, \\

\vspace{.05in}

S_1& = &\{((u,v), (p,\bar{w}), (\bar{x}, \bar{y})) \in \C^2 \times \P \times \C^3 \times \C^6 \times \C^5: \; (u,v)\neq (0,0), p=\bar{w}=\bar{y}=0 \}, \\
S_2 & = & \{((u,v), (p,\bar{w}), (\bar{x}, \bar{y})) \in \C^2 \times \P \times \C^3 \times \C^6 \times \C^5: \; (u,v)\neq (0,0), p=\bar{w}=\bar{x}=0 \}, 
\end{array}
\end{displaymath}
parametrizes rank 2 stable co-Higgs bundles with first Chern class $-F$ and second Chern class 1. 
\end{proposition}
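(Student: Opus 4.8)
The plan is to assemble the structural results obtained above into a single parameter space, stratifying first by the isomorphism type of the underlying bundle and then by the type of Higgs field it carries. By Proposition \ref{allunderlying1}, every stable pair $(E,\Phi) \in \MPP(-F,1)$ has underlying bundle an extension of $\OO(-1,1)$ by $\OO(0,-1)$, and such extensions are parametrized (up to strong isomorphism) by $\HH^1(\OO(1,-2)) \cong \C^2$, with coordinates $(u,v)$. The locus $(u,v)=(0,0)$ corresponds to the split bundle $\OO(0,-1) \p \OO(-1,1)$, while $(u,v) \neq (0,0)$ gives the non-trivial extensions $E_{u,v}$. I would handle these two cases separately, producing $S_0$ from the former and $S_1 \cup S_2$ from the latter.

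For the trivial extension I would invoke the normal form (\ref{phitrivial}): every stable Higgs field on $\OO(0,-1) \p \OO(-1,1)$ is conjugate to one with $A_2 \in \HH^0(\OO(0,2)) \cong \C^3$ and off-diagonal entry $z_1-p$ for some $p \in \P$. Recording $p$ together with the three coefficients $\bar{w}$ of $A_2$ yields exactly the factor $\P \times \C^3$, which is the stratum $S_0$; the remaining coordinates $\bar{x}, \bar{y}$ are set to zero since they are reserved for the non-trivial extensions.

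For a fixed non-trivial $E_{u,v}$, Lemma \ref{phi1orphi2} shows that an integrable $\Phi \in \HH^0(\End_0 E \tt T)$ is either of type $\Phi_1$ (when $C_1 \neq 0$) or of type $\Phi_2$ (when $C_1=0$); by Lemma \ref{cohonontrivialext} together with the explicit expressions (\ref{phi1entries}) and (\ref{phi2entries}), the former sweep out a copy of $\C^6$ (coordinates $\bar{x}$) and the latter a copy of $\C^5$ (coordinates $\bar{y}$), meeting only at the zero field. This reproduces the description $X_{u,v}=\{x_i y_j = 0\}$, and letting $(u,v)$ range over $\C^2 \setminus \{(0,0)\}$ gives the two strata $S_1$ (where $\bar{y}=0$) and $S_2$ (where $\bar{x}=0$), with $p=\bar{w}=0$ since those coordinates play no role here. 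Stability of each such pair has already been established, via Lemma \ref{indeedsemistable} in the split case and via stability of the non-trivial extensions themselves otherwise.

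Finally I would check that $S=S_0 \cup S_1 \cup S_2$ accounts for every point of $\MPP(-F,1)$ and nothing spurious: completeness holds because Proposition \ref{allunderlying1} exhausts the possible bundles and Lemma \ref{phi1orphi2} exhausts the integrable fields on each, while the equations defining $S_0, S_1, S_2$ encode precisely the integrability and stability constraints. The step needing the most care is the bookkeeping at the interfaces between strata --- in particular that the zero Higgs field over a non-trivial extension lies in both $S_1$ and $S_2$, consistent with $X_{u,v}$ being a union rather than a disjoint union --- together with the observation that this is a parametrization \emph{before} quotienting by weak isomorphism of extensions and by conjugation of Higgs fields, so that no injectivity is claimed at this stage.
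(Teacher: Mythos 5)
Your proposal is correct and follows essentially the same route as the paper: stratify by the extension class $(u,v)\in\HH^1(\OO(1,-2))\cong\C^2$, use the normal form (\ref{phitrivial}) to get the $\P\times\C^3$ stratum for the split bundle, and use Lemma \ref{phi1orphi2} together with the explicit parameter counts from (\ref{phi1entries}) and (\ref{phi2entries}) to get $X_{u,v}=\C^6\cup\C^5$ glued at the zero field for each non-trivial extension. Your closing remarks about the strata meeting at $(E,0)$ and about no injectivity being claimed before quotienting are exactly the right reading of the statement.
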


\begin{remark}
Note that $S_0 = X_0$ parametrizes the points of the form $(\OO(0,-1) \p \OO(-1,1),\Phi)$, $S_1$ the points $(E,\Phi_1)$ and $S_2$ the points $(E,\Phi_2)$, where $E$ is a non-trivial extension.
\end{remark}

By Lemma \ref{uptowi}, it is clear that the moduli space $\MPP(-F,1)$ is the quotient of $S$ by a $\C^*$ action of weight 1 on $(u,v) \in \C^2$. Hence we have

\begin{theorem}\label{explicitdescriptionmodulispace1}
$\MPP(-F,1)$ is a 7-dimensional algebraic variety whose singular locus are the points $(E,0)$ for any non-trivial extension $E$.
\end{theorem}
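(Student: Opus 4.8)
The plan is to extract the geometry of $\MPP(-F,1)$ directly from the explicit parametrization of Proposition \ref{modulispace1beforequotiening} together with Lemma \ref{uptowi}, which identifies the moduli space with the quotient $S/\C^*$ for the weight-one action on $(u,v)$. First I would settle the dimension. Since this action rescales $(u,v)$, it is free on the locus $(u,v)\neq(0,0)$, which contains both $S_1$ and $S_2$. In the coordinates of the proposition $S_1\cong(\C^2\setminus\{0\})\times\C^6$ (with free parameters $(u,v)$ and $\bar{x}$) and $S_2\cong(\C^2\setminus\{0\})\times\C^5$ (with free parameters $(u,v)$ and $\bar{y}$), so $S_1/\C^*\cong\P\times\C^6$ and $S_2/\C^*\cong\P\times\C^5$ have dimensions $7$ and $6$, while $S_0=\P\times\C^3$ has dimension $4$. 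As the top-dimensional stratum is $S_1/\C^*$, the variety $\MPP(-F,1)$ is $7$-dimensional.

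Next I would reduce the singularity question to the locus where $S_1$ and $S_2$ meet. Because the $\C^*$-action is free on $S_1$ and on $S_2$, the quotients $S_1/\C^*$ and $S_2/\C^*$ are smooth; and $S_0\cong\P\times\C^3$ is smooth and consists of pairs whose underlying bundle is the split extension $\OO(0,-1)\p\OO(-1,1)$, for which stability forces $B_2\neq0$ and hence $\Phi\neq0$, so $S_0$ carries no zero-Higgs-field pair and contributes only smooth points. Thus the only possible singularities occur along $S_1\cap S_2=\{(u,v)\neq(0,0),\ \bar{x}=\bar{y}=0\}$, whose image in the quotient is exactly the $\P$ of pairs $(E,0)$ with $E$ a non-trivial extension.

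To prove that each such point really is singular, I would invoke the local description recorded before the statement: for a fixed non-trivial extension $E_{u,v}$ the admissible Higgs fields form
\begin{displaymath}
X_{u,v} = \{(\bar{x},\bar{y}) \in \C^6 \times \C^5 : x_iy_j = 0 \text{ for all } i,j\} = \{\bar{x}=0\} \cup \{\bar{y}=0\},
\end{displaymath}
the union of a linear $\C^5$ and a linear $\C^6$ meeting only at the origin. Being reducible with two components of different dimensions through the origin, $X_{u,v}$ is singular precisely at that origin, namely at $(E_{u,v},0)$. Fibering $X_{u,v}$ over the one-dimensional base of weak-isomorphism classes of non-trivial extensions and taking the free $\C^*$-quotient, the moduli space is (\'etale-)locally a product $\C\times X_{u,v}$, whose singular locus is $\C\times\{0\}$; globally this is the image of $S_1\cap S_2$, i.e. $\{(E,0):E\text{ non-trivial}\}\cong\P$.

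The step I expect to be the main obstacle is the last one: rigorously establishing that near a point of $S_1\cap S_2$ the moduli space is indeed (\'etale-)locally the product of the smooth extension-parameter direction with the elementary cone $X_{u,v}$, and that passing to the $\C^*$-quotient neither creates nor removes singularities. Concretely, this requires checking that the family $\{X_{u,v}\}$ is locally trivial over the $\P$ of non-trivial extensions and that the split-bundle stratum $S_0$ is attached only along smooth loci in the closure; once this local triviality is in hand, the singularity analysis reduces to the transparent geometry of two coordinate subspaces meeting at a single point.
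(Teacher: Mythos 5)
Your proposal is correct and follows essentially the same route as the paper: the theorem is deduced from Proposition \ref{modulispace1beforequotiening} and Lemma \ref{uptowi} by taking the $\C^*$-quotient of $S$, with the dimension read off from the top stratum $S_1/\C^* \cong \P \times \C^6$ and the singular locus identified with the image of $S_1 \cap S_2$, where the two components $\{\bar{x}=0\}$ and $\{\bar{y}=0\}$ of $X_{u,v}$ meet only at the origin. The paper leaves the dimension count and the local product structure near $S_1 \cap S_2$ implicit, so your write-up simply makes explicit what the paper treats as immediate.
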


\begin{remark}
For the cases where $c_1=0, -C_0, -C_0-F$ and $c_2=1$, one can use analogous arguments to the ones presented here in order to describe the underlying bundles of the semistable co-Higgs pairs with those Chern classes, and this was done in \cite{AVC}. Nonetheless, the description of the Higgs fields, and thus a full picture of the moduli space in those cases, would also require a better understanding of the Higgs fields they admit. We expect that similar arguments to the ones presented here would achieve this goal.
\end{remark}

\section{Spectral correspondence and Hitchin map}\label{ssP1xP1}

In this section, we present spectral surfaces over $\P \times \P$ and discuss the Hitchin correspondence in this setting. Although the general theory for spectral surfaces has already been developed by Simpson in \cite{Simpson2}, here we exhibit the construction explicitly, providing the equations that cut out the spectral surface in the generic case. 

The Hitchin map $H$ that goes from the moduli space of semistable rank 2 co-Higgs bundles over $\P \times \P$, $\MMM$, to the global sections of $\text{S}^2(T)=\OO(4,0) \p \OO(2,2) \p \OO(0,4)$ is defined as follows:
\begin{displaymath}
\begin{array}{cccc}
H: & \MMM & \to & \HH^0(\text{S}^2(T)) \\
 & (E, \Phi) & \mapsto & \ch \Phi.
\end{array}
\end{displaymath}

Here we are identifying $\ch \Phi$ with $\det \Phi \in \HH^0(\text{S}^2(T))$, as $\ch \Phi= \eta^2(y) + \det \Phi$ (since $\Phi$ is trace-free), where $\eta$ denotes the tautological section of the pullback of $T$ to its own total space.
Explicitly, the Hitchin map is given as follows: 
Let $(E, \Phi) \in \MMM$, then working on an open set $\U$, we can write
\begin{displaymath}
\Phi=\Phi_1+\Phi_2=\left(\begin{array}{cc}
A_1 & B_1 \\
C_1 & -A_1 \\
\end{array}\right)
+
\left(\begin{array}{cc}
A_2 & B_2 \\
C_2 & -A_2 \\
\end{array}\right),
\end{displaymath}
where $A_1, B_1, C_1 \in \HH^0(\U,\OO(2,0))$ and $A_2, B_2, C_2 \in \HH^0(\U,\OO(0,2))$. Then
\begin{displaymath}
H(E,\Phi)=(\det \Phi_1, -2A_1A_2-2B_1C_2, \det \Phi_2) \in \HH^0(\OO(4,0) \p \OO(2,2) \p \OO(0,4)).
\end{displaymath}
We first note that the Hitchin map is not surjective. Indeed, if $H(E,\Phi)=(\rho_1, \rho_{1,2}, \rho_2)$, then by the above equation and the integrability of $\Phi$, we see that $\rho_{1,2}^2=4\rho_1\rho_2$, and so $H$ is clearly not onto. 

\begin{definition}
Let $(E,\Phi) \in \MMM$. The spectral surface $S_{\rho}$ associated to $\rho=\ch \Phi$, is given by those points $y \in \Tot(T)$ such that 
\begin{displaymath}
\ch \Phi(y) = \eta^2(y) + \det \Phi( \theta(y) )=0,
\end{displaymath}
where $\theta: \Tot(T) \to \P \times \P$. We equip $S_{\rho}$ with the restriction $\theta |_{S_{\rho}}$. 
\end{definition}
The equations of $S_{\rho}$ can be written as follows: if $y=(y_1,y_2) \in \Tot(T)$ and $\rho=(\rho_1, \rho_{1,2}, \rho_2)$, then the spectral surface is given by
\begin{equation}\label{spectralsurfaceeqn}
 \left\{
\begin{array}{c}
\eta_1^2(y_1) + \rho_1=0 \\
\eta_2^2(y_2) + \rho_2 =0 \\
2\eta_1(y_1)\eta_2(y_2) + \rho_{1,2}=0,
\end{array}
\right.
\end{equation}
where $\eta_1$ and $\eta_2$ are the tautological sections of the pullback of $\OO(2,0)$ and $\OO(0,2)$, respectively, to their own total spaces.

Note that multiplying the first equation by $\eta_2^2(y_2)$, and using the second equation and the fact that $\rho_{1,2}^2=4\rho_1\rho_2$, yields 
\begin{align*}
0 & = (\eta_1^2(y_1)+ \rho_1)\eta_2^2(y_2) \\
   & = \eta_1^2(y_1)\eta_2^2(y_2)-\rho_1\rho_2  \\
   & = (\eta_1(y_1)\eta_2(y_2) + \rho_{1,2}/2)(\eta_1(y_1)\eta_2(y_2) - \rho_{1,2}/2).
\end{align*}
Thus the first two equations yield a reducible surface in $\Tot(T)$. Clearly, as the third equation appears as one of the factors above, it cuts out a 2-dimensional subvariety of this surface. 
 
The remark below is an observation on what the integrability of $\Phi$ entails geometrically (in terms of eigenspaces), which follows immediately from this basic lemma from linear algebra (we include a proof here, as we were not able to find a proper reference):

\begin{lemma}
Suppose $M_1$ is an $n \times n$ complex matrix with non-repeated eigenvalues. If $M_2$ is such that $[M_1,M_2]=0$, then $M_1$ and $M_2$ have the same eigenvectors. In particular, if $M_2$ has non-repeated eigenvalues, then $M_1$ and $M_2$ have the same eigenspaces.
\end{lemma}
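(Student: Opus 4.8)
The plan is to exploit the elementary but crucial fact that commuting operators preserve one another's eigenspaces, combined with the observation that distinct eigenvalues force each eigenspace of $M_1$ to be a single line. This one-dimensionality is exactly the ingredient that turns a mere invariance statement into the sharp conclusion that common eigenvectors exist.

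First I would fix an eigenvector $v$ of $M_1$, say $M_1 v = \lambda v$, and use the hypothesis $[M_1,M_2]=0$ to compute
$$M_1(M_2 v) = M_2 M_1 v = \lambda\,(M_2 v),$$
which shows that $M_2 v$ again lies in the $\lambda$-eigenspace of $M_1$. Since $M_1$ has $n$ distinct eigenvalues it is diagonalizable with one-dimensional eigenspaces, so that $\lambda$-eigenspace is precisely the line $\C v$; hence $M_2 v = \mu v$ for some scalar $\mu$, and $v$ is an eigenvector of $M_2$ as well. Because this applies to every eigenvector of $M_1$, and because the distinct eigenvalues supply a full basis of such eigenvectors, I conclude that every eigenvector of $M_1$ is an eigenvector of $M_2$; equivalently, the two matrices are simultaneously diagonalized in a common basis.

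For the final assertion I would simply note that the argument above is symmetric in $M_1$ and $M_2$ once $M_2$ is also assumed to have non-repeated eigenvalues: its eigenspaces are then lines too, so each eigenvector of $M_2$ is in turn an eigenvector of $M_1$. The two sets of eigenvectors therefore coincide, and since in both cases the eigenspaces are the same $n$ lines spanning $\C^n$, the eigenspaces agree. I do not anticipate any genuine obstacle here, as the whole statement follows directly from the commutation relation; the only place the distinct-eigenvalue hypothesis is needed is to guarantee one-dimensional eigenspaces, which is exactly what upgrades ``$M_2 v$ lies in the $\lambda$-eigenspace of $M_1$'' to ``$M_2 v$ is a scalar multiple of $v$.''
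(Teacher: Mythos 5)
Your proof is correct and follows essentially the same route as the paper's: apply $[M_1,M_2]=0$ to an eigenvector $v$ of $M_1$ to see that $M_2v$ lies in the same (one-dimensional) eigenspace, hence is a scalar multiple of $v$. Your version is slightly more explicit about where the distinct-eigenvalue hypothesis enters, but the argument is the same.
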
 

\begin{proof}
Let $\lambda$ be an eigenvalue of $M_1$ and $v \neq 0$ be in the eigenspace corresponding to $\lambda$. Then, we have that
\begin{align*}
\pmb{0} & =  \pmb{0}v \\
             & =  [M_1,M_2]v \\
             & = M_1M_2v-M_2M_1v \\
             & = M_1(M_2v)-\lambda(M_2v),
\end{align*}
and so $M_1(M_2v) = \lambda(M_2v)$. This implies that $M_2v$ is an element of the eigenspace corresponding to $\lambda$, and so it can be written as a complex multiple of $v$. Hence $M_2v=\lambda'v$, and the result follows.
\end{proof}

\begin{remark}\label{integrabilitygeometric}
We have seen that the integrability of $\Phi$ is equivalent to $[\Phi_1,\Phi_2]=0$. Thus, from the above lemma, for those points of $\P \times \P$ where $\Phi_1$ and  $\Phi_2$ have non-repeated eigenvalues, we must have that $\Phi_1$ and $\Phi_2$ share the same eigenspaces.
\end{remark}
 
 Analogous to the case of curves, the elements of $S_{\rho}$ lying above a point in $\P \times \P$ are pairs where the first entry is an eigenvalue of $\Phi_1$, and the second entry is an eigenvalue of $\Phi_2$. Moreover, we claim that for generic $\rho$, $S_{\rho}$ is a double cover of $\P \times \P$. To see this, let $\lambda_i^1$ and $\lambda_i^2$ be the eigenvalues of $\Phi_i$ at an unramified point $p \in \P \times \P$. Since $\Phi$ is integrable, by Remark \ref{integrabilitygeometric}, $\Phi_1$ and $\Phi_2$ have the same eigenspaces, and so we assume that the eigenspace of $\lambda_1^j$ is equal to the eigenspace of $\lambda_2^j$ for $j=1,2$. We now check that the third equation of $S_{\rho}$ is equivalent to $(\lambda_1^i, \lambda_2^j) \in S_{\rho}$ if and only if $i=j$. In other words, the points of $S_{\rho}$, at unramified points of $\P \times \P$, are pairs of eigenvalues of $\Phi_1$ and $\Phi_2$ sharing the same eigenspace. First note that since $\Phi_1$ and $\Phi_2$ commute, $\lambda_1^j$ and $\lambda_2^j$ sharing the same eigenspaces is equivalent to  $\lambda_1^j\lambda_2^j$ being an eigenvalue of $\Phi_1\Phi_2$, and so it must satisfy the characteristic polynomial $\ch (\Phi_1\Phi_2)$:
\begin{displaymath}
\eta_1^2(\lambda_1^j)\eta_2^2(\lambda_2^j)-\text{tr}(\Phi_1\Phi_2)\eta_1(\lambda_1^j)\eta_2(\lambda_2^j)+\det(\Phi_1\Phi_2) = 0.
\end{displaymath}
After some algebraic manipulation, the above equation reduces to
\begin{displaymath}
2\eta_1(\lambda_1^j)\eta_2(\lambda_2^j) + \rho_{1,2}=0,
\end{displaymath}
which is precisely saying that $(\lambda_1^j,\lambda_2^j)$ satisfies the third equation. We can thus conclude that the points in $S_{\rho}$ lying above $p \in \P \times \P$ are $(\lambda_1^1,\lambda_2^1)$ and $(\lambda_1^2,\lambda_2^2)$, showing that $S_{\rho}$ is indeed a double cover of $\P \times \P$.

\begin{remark}\label{ellipticfibrations} Let $\rho$ be generic. Unlike the curves case, in order to get a Hitchin correspondence, one needs to push-forward rank 1 torsion free sheaves over $S_{\rho}$ instead of only elements of $\Pic(S_{\rho})$ (see \cite{Simpson1,Simpson2}). 
\end{remark}

Now, we aim to show that, the underlying bundle of the generic elements of $\MMM$ are indecomposable.

\begin{lemma}\label{decomphi1orphi2}
Let $E=L_1 \p L_2$ be a decomposable rank 2 vector bundle over $\P \times \P$. 
\begin{enumerate} 
\item Suppse $\mu(L_1) > \mu(L_2)$. If $(E, \Phi=\Phi_1 + \Phi_2)$ is a semistable co-Higgs pair, then $\Phi_1=0$ or $\Phi_2=0$.
\item Suppose $\mu(L_1)=\mu(L_2)$. Then, either $\det \Phi_1$ is non-generic in $\HH^0(\OO(4,0))$ or $\det \Phi_2$ is non-generic in $\HH^0(\OO(0,4))$.
\end{enumerate}
\end{lemma}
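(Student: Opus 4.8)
The plan is to compute everything in the matrix description of $\Phi=\Phi_1+\Phi_2$ adapted to the splitting $E=L_1\p L_2$, with trace-free entries $A_j,B_j,C_j$ as in the local description used earlier. Writing $L_i=\OO(a_i,b_i)$ and setting $m=a_1-a_2$, $n=b_1-b_2$, one has $A_1\in\HH^0(\OO(2,0))$, $A_2\in\HH^0(\OO(0,2))$, while $B_1\in\HH^0(\OO(m+2,n))$, $C_1\in\HH^0(\OO(-m+2,-n))$, $B_2\in\HH^0(\OO(m,n+2))$ and $C_2\in\HH^0(\OO(-m,-n+2))$. By Remark~\ref{vanishingcohomology}, such a section is nonzero only when both of its bidegrees are $\geq 0$. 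Throughout I use that $\P\times\P$ is integral, so a product of sections vanishes only if one factor does, together with the integrability relations~(\ref{integrability}).

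For (1), the hypothesis $\mu(L_1)>\mu(L_2)$ reads $m+n>0$, so $L_1$ destabilizes $E$ and semistability of $(E,\Phi)$ prevents $L_1$ from being $\Phi$-invariant; equivalently, $C_1$ and $C_2$ are not both zero. The bidegree bounds show that $C_1\neq 0$ forces $n\leq 0$ and hence $m>0$, whereas $C_2\neq 0$ forces $m\leq 0$; thus exactly one of $C_1,C_2$ is nonzero. If $C_1\neq 0$ then $C_2=0$, and~(\ref{integrability}) gives $C_1A_2=A_1C_2=0$ and $C_1B_2=B_1C_2=0$, whence $A_2=B_2=0$ and $\Phi_2=0$; the case $C_2\neq 0$ yields $\Phi_1=0$ symmetrically.

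For (2) the hypothesis gives $n=-m$. If $L_1\not\cong L_2$, i.e.\ $m\neq 0$, then $B_1\neq 0$ would require $-2\leq m\leq 0$ while $C_1\neq 0$ would require $0\leq m\leq 2$, so $B_1C_1=0$ and $\det\Phi_1=-A_1^2$ is, up to sign, a perfect square (or zero), hence non-generic; the same computation gives $\det\Phi_2=-A_2^2$. If instead $L_1\cong L_2$, I first tensor by $L_1^{\lor}$, which affects neither (semi)stability nor the traceless part of $\Phi$, and so reduce to $E=\OO\p\OO$; now all six entries are binary quadratics in the respective coordinate, and I argue by contradiction, assuming both $\det\Phi_1$ and $\det\Phi_2$ are generic. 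Genericity excludes $\det\Phi_j$ being a perfect square, and so forces $B_1,C_1,B_2,C_2$ all nonzero. The key device is the K\"unneth identification $\HH^0(\OO(2,2))\cong\HH^0(\OOO(2))\tt\HH^0(\OOO(2))$, under which each relation in~(\ref{integrability}) equates two decomposable tensors; when both are nonzero their factors must be proportional.

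The bookkeeping of this proportionality is where I expect the real work to lie, but it splits cleanly according to $A_1,A_2$. If $A_1\neq 0$ and $A_2\neq 0$, then $A_1B_2=B_1A_2$ and $C_1A_2=A_1C_2$ make both $B_1$ and $C_1$ proportional to $A_1$, so $\det\Phi_1=-A_1^2-B_1C_1$ is a scalar multiple of $A_1^2$, hence non-generic, a contradiction. If $A_1=0$, then $B_1A_2=A_1B_2=0$ forces $A_2=0$, and $B_1C_2=C_1B_2$ makes $C_1$ proportional to $B_1$, so $\det\Phi_1=-B_1C_1$ is a scalar multiple of $B_1^2$, again non-generic. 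Finally $A_2=0$ with $A_1\neq 0$ is impossible, since $A_1B_2=B_1A_2=0$ would force $B_2=0$, contradicting genericity of $\det\Phi_2$. In every case the two determinants cannot both be generic, which is the assertion of~(2).
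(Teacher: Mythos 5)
Your proof is correct and follows essentially the same route as the paper's: the same matrix description of $\Phi_1+\Phi_2$ relative to the splitting, the same use of the bidegree vanishing criteria to kill off-diagonal entries, and the same use of the integrability relations (\ref{integrability}) to force $\det\Phi_i$ to be a multiple of a perfect square. The only substantive difference is in the case $L_1\cong L_2$, where your exhaustive case split on the vanishing of $A_1$ and $A_2$ (via the K\"unneth decomposability of the products) is somewhat more careful than the paper's informal reduction to ``without loss of generality $A_2\neq 0$'' followed by evaluation at a point where $A_2$ does not vanish.
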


\begin{proof}
Let $L_1=\OO(a_1,b_1)$ and $L_2=\OO(a_2,b_2)$. Then, any element of $\HH^0(\End_0 E \tt T)$ is of the form 
\begin{displaymath}
\Phi=\Phi_1+\Phi_2=\left(\begin{array}{cc}
A_1 & B_1 \\
C_1 & -A_1 \\
\end{array}\right)
+
\left(\begin{array}{cc}
A_2 & B_2 \\
C_2 & -A_2 \\
\end{array}\right),
\end{displaymath}
with $A_1 \in \HH^0(\OO(2,0))$, $B_1 \in \HH^0(\OO(a_1-a_2+2,b_1-b_2)$, $C_1 \in \HH^0(\OO(a_2-a_1+2,b_2-b_1))$ and $A_2 \in \HH^0(\OO(0,2))$, $B_2 \in \HH^0(\OO(a_1-a_2,b_1-b_2+2))$, $C_2 \in \HH^0(\OO(a_2-a_1,b_2-b_1+2))$.

\noindent 1. Suppose $\mu(L_1) > \mu(L_2)$. We have two cases to consider. If $a_1 > a_2$, then any element in $\HH^0(\End_0 E \tt T)$ is such that $C_2=0$. If we were to have a Higgs field $\Phi$ for $E$ such that $(E, \Phi)$ is semistable, then $C_1$ must not be identically zero, for otherwise it would leave $L_1$ invariant, contradicting semistability. The integrability condition, equations (\ref{integrability}), implies that $A_2=B_2=0$. Hence, $\Phi=\Phi_1$ with non-zero $C_1$. Similarly, if $b_1 > b_2$, we get that $\Phi=\Phi_2$ with non-zero $C_2$.

\noindent 2. Suppose $\mu(L_1)=\mu(L_2)$. It is enough to consider the following three cases:
\begin{enumerate}
\item[(i)] If $a_1 > a_2$ and $b_2 > b_1$, then any element in $\HH^0(\End_0 E \tt T)$ is such that $B_1=0$ and $C_2=0$. Then $\det \Phi_1=-A_1^2$ and $\det \Phi_2=-A_2^2$, so they are non-generic in $\HH^0(\OO(4,0))$ and $\HH^0(\OO(0,4))$, respectively. 
\item[(ii)] If $a_2 > a_1$ and $b_1 > b_2$, then any element in $\HH^0(\End_0 E \tt T)$ is such that $C_1=0$ and $B_2=0$. Then $\det \Phi_1=-A_1^2$ and $\det \Phi_2=-A_2^2$, so they are non-generic in $\HH^0(\OO(4,0))$ and $\HH^0(\OO(0,4))$, respectively. 
\item[(iii)] If $a_1=a_2$ and $b_1=b_2$, then any element in $\HH^0(\End_0 E \tt T)$ is such that $A_1, B_1, C_1$ are elements of $\HH^0(\OO(2,0))$ and $A_2, B_2, C_2$ are elements of $\HH^0(\OO(0,2))$. Note that we may assume that at least one entry in either $\Phi_1$ or $\Phi_2$ is non-zero, for otherwise the result follows. Without loss of generality let us assume that $A_2 \neq 0$. By the integrability condition, equations (\ref{integrability}) imply that if $A_2 \neq 0$, we can pick a point $p_2 \in \P$ , which is not a zero of $A_2$, evaluating both $A_1B_2=B_1A_2$ and $C_1A_2=A_1C_2$ on $p=(z_1,p_2)$, we get $B_1=uA_1$ and $C_1=vA_1$ for $u,v \in \C$. Hence, $\det \Phi_1=-(1+uv)A_1^2$, which is non-generic in $\HH^0(\OO(4,0))$. 
\end{enumerate}
\end{proof}

\begin{proposition}\label{fibresoftheHitchinmap}
If $\rho$ is generic, then $H^{-1}(\rho)$ does not contain co-Higgs pairs where the underlying bundle is decomposable. In particular, for $(E, \Phi) \in \MMM$ generic, $E$ is not decomposable. 
\end{proposition}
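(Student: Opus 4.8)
The plan is to derive the statement directly from Lemma~\ref{decomphi1orphi2}, since that lemma already records exactly how decomposability of the underlying bundle constrains the characteristic data of a semistable Higgs field. Suppose, toward a contradiction, that $\rho$ is generic and that $H^{-1}(\rho)$ contains a pair $(E,\Phi)$ with $E=L_1\oplus L_2$ decomposable; relabelling if necessary, I assume $\mu(L_1)\geq\mu(L_2)$. Recall that $\rho=(\rho_1,\rho_{1,2},\rho_2)$ with $\rho_1=\det\Phi_1\in\HH^0(\OO(4,0))$ and $\rho_2=\det\Phi_2\in\HH^0(\OO(0,4))$, so that genericity of $\rho$ forces both $\rho_1$ and $\rho_2$ to be generic in their respective linear systems.

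The argument then splits according to the two cases of Lemma~\ref{decomphi1orphi2}. If $\mu(L_1)>\mu(L_2)$, then, since $(E,\Phi)$ is a point of $\MMM$ and hence semistable, part~(1) of the lemma gives $\Phi_1=0$ or $\Phi_2=0$; consequently $\rho_1=\det\Phi_1=0$ or $\rho_2=\det\Phi_2=0$, and in either case $\rho$ fails to be generic. If instead $\mu(L_1)=\mu(L_2)$, then part~(2) of the lemma asserts that $\det\Phi_1$ is non-generic in $\HH^0(\OO(4,0))$ or $\det\Phi_2$ is non-generic in $\HH^0(\OO(0,4))$; again $\rho$ is non-generic. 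Both cases contradict the genericity of $\rho$, which establishes the first assertion: for generic $\rho$ no point of $H^{-1}(\rho)$ has decomposable underlying bundle.

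For the second assertion I would repackage this as a statement about loci. Let $D\subseteq\MMM$ denote the locus of pairs whose underlying bundle is decomposable, and let $N\subseteq\HH^0(\SSS^2(T))$ denote the non-generic locus (a proper closed subset, cut out by the vanishing of the relevant discriminants). What the previous paragraphs show is precisely that $H(D)\subseteq N$, so that $D\subseteq H^{-1}(N)$. Since the spectral construction recalled in Remark~\ref{ellipticfibrations} produces co-Higgs pairs over a smooth double cover $S_\rho$ for generic $\rho$, the image of $H$ is not contained in $N$; hence $H^{-1}(N)$ is a \emph{proper} closed subset of $\MMM$ containing $D$. A generic pair $(E,\Phi)$ therefore lies outside $H^{-1}(N)$, and in particular has $E$ indecomposable.

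I expect the first two paragraphs to be essentially immediate once Lemma~\ref{decomphi1orphi2} is in hand, as they amount to reading off $\det\Phi_i$ in each configuration. The only genuine subtlety lies in the last step: one must know that the image of the Hitchin map really does meet the generic locus, for otherwise every $\rho\in\mathrm{Im}(H)$ would be non-generic and the statement would be vacuous. This is exactly where the existence half of the spectral correspondence is required, and turning the closedness-and-properness of $H^{-1}(N)$ into a rigorous dimension count — rather than a heuristic — is the main obstacle.
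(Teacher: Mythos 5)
Your first two paragraphs are exactly the paper's proof: apply Lemma~\ref{decomphi1orphi2} in the two slope cases to conclude that $\rho$ would have a vanishing or non-generic $\det\Phi_i$, contradicting genericity. Your third paragraph supplies a justification for the ``in particular'' clause that the paper simply asserts; the point you flag --- that one needs the image of $H$ to meet the generic locus, which follows from the spectral construction --- is a legitimate refinement rather than a defect.
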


\begin{proof}
Let $\rho$ be generic and assume that $H^{-1}(\rho)$ contains a pair with decomposable underlying bundle. Then, by Lemma \ref{decomphi1orphi2} (1.), either $\rho=(\rho_1,0,0)$ or $\rho=(0,0,\rho_2)$, or, by Lemma \ref{decomphi1orphi2} (2.), $\rho=(\rho_1,\rho_{1,2},\rho_2)$ with either $\rho_1$ or $\rho_2$ non-generic. Hence, $\rho$ is not generic.
\end{proof}

Let us now discuss spectral surfaces in the case where either $\Phi_1$ or $\Phi_2$ is zero. We will be interested in the cases when $\rho=(\rho_1,0,0)$ or $(0,0,\rho_2) \in \HH^0(\text{S}^2(T))$, where $\rho_1$ and $\rho_2$ are generic in $\HH^0(\OO(4,0))$ and $\HH^0(\OO(0,4))$, respectively. 

When $\rho=(\rho_1,0,0)$, and $\rho_1$ is generic, any Higgs field $\Phi$ of a co-Higgs pair in the fibre of the Hitchin map above $\rho$ must have the form $\Phi=\Phi_1$. To see this, let $\Phi$ be a Higgs field such that $\det \Phi = \rho$. Since $\det \Phi_2=0$, we have that $\lambda=0$ is an eigenvalue of $\Phi_2$ of algebraic multiplicity $2$. Also, above all points where $\det \Phi_1 \neq 0$, we have a basis of eigenvectors for $\Phi_1$. By the integrability of $\Phi$ and Remark \ref{integrabilitygeometric}, this is also a basis of eigenvectors of $\Phi_2$. Hence, $\Phi_2$ is diagonalizable and thus the zero matrix at all such points. Hence, $\Phi_2=0$. Moreover, in this case, the equations of the spectral surface reduce to
\begin{equation}
\left\{
\begin{array}{c}
\eta_1^2(y_1) + \det \Phi_1 = 0 \\
\eta_2^2(y_2) = 0.
\end{array}
\right.
\end{equation}
Hence, 
\begin{displaymath}
S_{\rho} = X_{\rho_1} \times \P,
\end{displaymath} 
where $X_{\rho_1}$ is the spectral curve associated to $\rho_1$ (we view $\rho_1$ as an element of $\HH^0(\P, \OOO(4))$), which is an elliptic curve (see \cite[Section~5]{SR2}). Also, the projection $\theta: S_{\rho} \to \P \times \P$ is given by $(\pi, \Id_{\P})$, where $\pi: X_{\rho} \to \P$.

Similar observations can be made when $\rho=(0,0,\rho_2)$ is generic (in particular $S_{\rho}= \P \times X_{\rho_2}$). Consequently, in both of these cases, we have a Hitchin correspondence on the spectral surface coming from the correspondence on the spectral curve. More precisely,

\begin{proposition}\label{correspondencepushingdownbundles}
Suppose $\rho=(\rho_1,0,0) \in \HH^0(\SSS^2(T))$ with $\rho_1$ generic. Then, there is a Hitchin correspondence between the line bundles of $S_{\rho}$ and the elements $(E, \Phi)$ of $\MMM$ with underlying bundle of the form $E= \OO(a,m) \p \OO(b,m)$ and $\Phi= \Phi_1 \in \HH^0(\End_0 E \tt \OO(2,0))$.
\end{proposition}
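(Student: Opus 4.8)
The plan is to exploit the product structure $S_{\rho}=X_{\rho_1}\times\P$ established above, and thereby reduce the asserted correspondence to the Hitchin correspondence on the elliptic spectral curve $X_{\rho_1}$ over $\P$, which is available from \cite[Section~5]{SR2}. First I would record the Picard group of the spectral surface: since $X_{\rho_1}$ is a smooth (elliptic) curve and the second factor is $\P$, one has $\Pic(S_{\rho})=q_1^*\Pic(X_{\rho_1})\p q_2^*\Pic(\P)$, where $q_1\colon S_{\rho}\to X_{\rho_1}$ and $q_2\colon S_{\rho}\to\P$ are the two projections. Hence every line bundle $M$ on $S_{\rho}$ is uniquely of the form $M=q_1^*\LL\tt q_2^*\OOO(m)$ with $\LL\in\Pic(X_{\rho_1})$ and $m\in\ZZ$. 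Note that smoothness of $S_{\rho}$ is exactly what allows us to work with line bundles here, rather than the torsion-free rank $1$ sheaves needed in the generic case of Remark \ref{ellipticfibrations}.

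Next I would construct the forward map by pushing $M$ down along $\theta=(\pi,\Id_{\P})$. Because $\theta=\pi\times\Id_{\P}$ is a product morphism, pushforward commutes with the external tensor product, giving $\theta_*M=\pr_1^*(\pi_*\LL)\tt\pr_2^*\OOO(m)$. The curve correspondence realizes $\pi_*\LL$ as a rank $2$ bundle on $\P$, and since every bundle on $\P$ splits, $\pi_*\LL\cong\OOO(a)\p\OOO(b)$; therefore $E:=\theta_*M=\OO(a,m)\p\OO(b,m)$, which is exactly the asserted shape. For the Higgs field I would push forward multiplication by the tautological section $\eta_1$ of $\OO(2,0)$. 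As $\eta_2$ vanishes identically on $S_{\rho}$, this produces a field $\Phi=\Phi_1\in\HH^0(\End_0 E\tt\OO(2,0))$ with $\Phi_2=0$, so that $\Phi\wedge\Phi=2[\Phi_1,\Phi_2]=0$ automatically and $\det\Phi_1=\rho_1$ by construction.

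For the inverse map, the key observation is that twisting by $\OO(0,m)$ leaves $\End_0$ unchanged, so that $\End_0 E\tt\OO(2,0)=\pr_1^*\bigl(\End_0(\OOO(a)\p\OOO(b))\tt\OOO(2)\bigr)$; since $\pr_{1*}\OO=\OO_{\P}$, this yields a canonical identification $\HH^0(\End_0 E\tt\OO(2,0))\cong\HH^0(\End_0(\OOO(a)\p\OOO(b))\tt\OOO(2))$. Thus a field $\Phi_1$ on the surface descends to a Higgs field $\varphi$ on $V=\OOO(a)\p\OOO(b)$ over $\P$ with $\det\varphi=\rho_1$; applying the inverse curve correspondence to $(V,\varphi)$ returns a line bundle $\LL$ on $X_{\rho_1}$, and I would set $M=q_1^*\LL\tt q_2^*\OOO(m)$. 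That the two assignments are mutually inverse then follows directly from the corresponding statement on the curve.

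The step I expect to be the main obstacle is matching stability across the correspondence, since the underlying bundle $E$ is decomposable and therefore unstable as a bundle. I must check that each pushed-down pair $(E,\Phi_1)$ is genuinely co-Higgs stable (so that it lies in $\MMM$) and, conversely, that every stable pair of the prescribed form arises this way. The natural argument is that a $\Phi_1$-invariant destabilizing sub-line bundle of $E$ would correspond to an $\OO_{S_{\rho}}$-submodule of the line bundle $M$ on the integral surface $S_{\rho}$; as $S_{\rho}$ is irreducible and reduced, $M$ admits no such submodule pushing down to a rank $1$ subsheaf of $E$, so no invariant destabilizing sub-line bundle can exist. Equivalently, such a sub-line bundle would descend to a $\varphi$-invariant destabilizing sub-line bundle of $V$ over $\P$, which is excluded by the curve correspondence. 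Carrying out this stability bookkeeping carefully with respect to the polarization $H=C_0+F$ (where $\deg_H\OO(p,q)=p+q$), and accounting for the sub-line bundles of $E$ that are not pulled back from $\P$, is the delicate point.
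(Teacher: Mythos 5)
Your proposal is correct and follows essentially the same route as the paper: decompose a line bundle on $S_{\rho}=X_{\rho_1}\times\P$ as an external tensor product, push down along $\theta=(\pi,\Id_{\P})$ using the product structure to get $\OO(a,m)\p\OO(b,m)$ together with $\Phi=\Phi_1$ from the multiplication map, and invert by descending $\Phi_1$ to $\P$ and applying the curve correspondence. Your added care about stability and about the identification $\HH^0(\End_0 E\tt\OO(2,0))\cong\HH^0(\End_0(\OOO(a)\p\OOO(b))\tt\OOO(2))$ only makes explicit what the paper leaves implicit.
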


\begin{proof}
Let $M$ be a line bundle over $S_{\rho}$, then $M$ is of the form $\Pr_1^*L \tt \Pr_2^* \OOO(m)$ (see \cite[Chapter~3, Section~12]{H}), where $L$ is a line bundle over $X_{\rho}$ and $m \in \ZZ$, and $\Pr_1, \Pr_2$ are the projections of $S_{\rho}$ to $X_{\rho}$ and $\P$, respectively. From the commutative diagram
$$
\xymatrix{
S_{\rho}=X_{\rho} \times \P \ar[d]^{\Pr_1}\ar[rr]^{(\pi,\Id_{\P})}&&\P \times \P \ar[d]_{\pr_1} \\
X_{\rho} \ar[rr]^{\pi}&& \P
}
$$
we see that $(\pi,\Id_{\P})_*(\Pr_1^*(L))=\pr_1^*(\pi_*(L))=\OO(a,0) \p \OO(b,0)$ for some $a , b \in \ZZ$ such that $\pi_*(L)= \OOO(a) \p \OOO(b)$. Similarly, from the commutative diagram
$$
\xymatrix{
S_{\rho}=X_{\rho} \times \P \ar[d]^{\Pr_2}\ar[rr]^{(\pi,\Id_{\P})}&&\P \times \P \ar[d]_{\pr_2} \\
\P \ar[rr]^{\Id_{\P}}&& \P
}
$$
we see that $(\pi,\Id_{\P})_*(\Pr_2^*(\OOO(m)))=\pr_2^*(\Id_{\P}*(\OOO(m)))=\OO(0,m)$. Therefore, $\theta_* M= \OO(a,m) \p \OO(b,m)$. Moreover, since the multiplication of elements in $M$ by elements in $S_{\rho}$ maps to $M \tt \OO(2,0)$, the push-forward of $- \tt \eta$ yields a Higgs field $\Phi$ with $\Phi=\Phi_1$. The Higgs field $\Phi_1$ is the pullback of the Higgs field obtained by pushing-down the multiplication map of $L$.

On the other hand, if we start with something of the form $\OO(a,m) \p \OO(b,m)$, to find the corresponding line bundle over $S_{\rho}$ we first find the line bundle $L$ over $X_{\rho}$ corresponding to $(\OOO(a) \p \OOO(b), \Phi_1)$. Then we tensor the pullback of the latter with $\Pr_2^*\OOO(m)$.
\end{proof}

\begin{remark}\label{remarkcorrespondencepushingdownbundles}
A similar result holds when $\rho$ is of the form $(0,0, \rho_2)$ with $\rho_2$ generic.
\end{remark}

\bibliographystyle{plain}

\renewcommand*{\bibname}{References}
\addcontentsline{toc}{section}{\textbf{References}}
\bibliography{UWBIBB}


\end{document}